\title{Large scale geometry of homeomorphism groups}
\author {Kathryn Mann and Christian Rosendal}
\address{Department of Mathematics, Statistics, and Computer Science (M/C 249)\\
University of Illinois at Chicago\\
851 S. Morgan St.\\
Chicago, IL 60607-7045\\
USA}
\email{rosendal.math@gmail.com}
\urladdr{http://homepages.math.uic.edu/$~$rosendal}
\address{Department of Mathematics, UC Berkeley\\
970 Evans hall \\
Berkeley, CA 94720 \\
USA}
\email{kpmann@math.berkeley.edu}
\urladdr{math.berkeley.edu/~mann}
\date {}
\newcommand{\norm}[1]{\lVert#1\rVert}
\newcommand {\B}{\mathbb B}
\newcommand {\A}{\mathbb A}
\newcommand {\N}{\mathbb N}
\newcommand {\R}{\mathbb R}
\newcommand {\Z}{\mathbb Z}
\newcommand {\T}{\mathbb T}
\newcommand {\D}{\mathbb D}
\renewcommand{\S}{\mathbb S}
\DeclareMathOperator{\SL}{SL}
\newcommand{\eps}{\epsilon}
\newcommand{\iso}{\cong}
\newcommand{\inj}{\hookrightarrow}
\newcommand{\saa}{\Rightarrow}
\newcommand{\til}{\rightarrow}
\newcommand {\del}{ \; \big| \;}
\newcommand {\ku} {\mathcal}
\newcommand{\inv}{^{-1}}
\newcommand{\id}{{\rm id}}
\newcommand {\e} {\exists}
\DeclareMathOperator{\Diff}{Diff}
\DeclareMathOperator{\Homeo}{Homeo}
\DeclareMathOperator{\supp}{supp}
\newtheorem{thm}{Theorem}
\newtheorem{cor}[thm]{Corollary}
\newtheorem{lemma}[thm]{Lemma}
\newtheorem{prop} [thm] {Proposition}
\newtheorem{defi} [thm] {Definition}
\newtheorem{prob}[thm]{Problem}
\newtheorem{quest}[thm]{Question}
\theoremstyle{definition}
\newtheorem{rem}[thm]{Remark}
\newtheorem{exa}[thm]{Example}
\keywords{Large scale geometry, Homeomorphism groups of compact manifolds}
\thanks{C. Rosendal was partially supported by a Simons Foundation Fellowship (Grant \#229959) and also recognizes support from the NSF (DMS 1201295).  K. Mann was supported by NSF grant \#0932078 while in residence at the MSRI during spring 2015.}
\begin{document}

\begin{abstract} 
Let $M$ be a compact manifold.  We show the identity component $\Homeo_0(M)$ of the group of self-homeomorphisms of $M$ has a well-defined quasi-isometry type, and study its large scale geometry.   Through examples, we relate this large scale geometry to both the topology of $M$ and the dynamics of group actions on $M$.  This gives a rich family of examples of non-locally compact groups to which one can apply the large-scale methods developed in previous work of the second author.  
\end{abstract}

\maketitle

\tableofcontents


\section{Introduction}
The aim of the present paper is to apply the large scale geometric methods developed in \cite{large scale} for general, i.e., not necessary locally compact, topological groups to the study of the homeomorphism groups of compact manifolds.    These homeomorphism groups are never locally compact, so the standard tools of geometric group theory (i.e. for finitely generated, or for locally compact, $\sigma$-compact groups) do not apply.  
A historical solution to this problem, though not our approach, is to consider instead the discrete \emph{mapping class} or \emph{homeotopy group}, the quotient of the group of orientation--preserving homeomorphisms of a manifold $M$ by the connected component of the identity.  In some cases, such as when $M$ is a compact surface, this mapping class group is known to be finitely generated and its large scale geometry understood.  In other cases, the mapping class group is not finitely generated, or no presentation is known.  In any case, a great deal of information on $\Homeo(M)$ is obviously lost by passing to this quotient.   Here we propose a study of the geometry of the identity component $\Homeo_0(M)$ itself.  

In \cite{large scale}, a framework is developed to generalizes geometric group theory in a broad setting.  This framework applies to any topological group $G$ admitting a \emph{maximal metric}, i.e. a compatible right-invariant metric $d$ such that, for any other compatible right-invariant metric $d'$, the identity map $(G, d) \to (G, d')$ is Lipschitz for large distances.   Up to quasi-isometry, such a metric may equivalently be given as the \emph{word metric} with respect to a suitable, namely \emph{relatively (OB)}, generating set for $G$.   Provided such a metric exists, it gives $G$ a canonical quasi-isometry type, and many of the standard tools of geometric group theory can be adapted to its study.   More details of this framework are recalled in Section \ref{prelim sec}.  
\bigskip

\noindent \textbf{General results. }
Our first result is a particularly nice description of a maximal metric for $\Homeo_0(M)$.  Let $\mathcal{B}$ be a finite open cover of $M$ by homeomorphic images of balls.  Define the \emph{fragmentation metric with respect to $\mathcal{B}$} on $\Homeo_0(M)$ by 
$$d_\mathcal{B}(f, g) : = \min \{k \mid fg^{-1} = f_1 f_2 \cdots f_k \}$$ 
where each $f_i$ is required to fix pointwise the complement of some ball $B \in \mathcal{B}$.   This is indeed a finite, right-invariant metric, moreover, we have the following. 

\begin{thm} \label{loc OB}
Let $M$ be a compact manifold. Then $\Homeo_0(M)$ admits a maximal metric  and hence  has a well defined quasi-isometry type. Moreover, this quasi-isometry type is simply that given by the  
\emph{fragmentation metric} with respect to any finite cover of $M$ by open balls. 
\end{thm}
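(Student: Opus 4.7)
The plan is to invoke the general framework of \cite{large scale}, which characterizes the topological groups admitting a maximal right-invariant metric as exactly those generated by a relatively (OB), i.e.\ coarsely bounded, subset; moreover, in that case the word metric with respect to any such generating set realizes the maximal quasi-isometry type. I would apply this to the generating set
\[
S := \bigcup_{B \in \mathcal{B}} \{f \in \Homeo_0(M) : \supp(f) \subset B\},
\]
observing that by definition $d_\mathcal{B}$ is exactly the right-invariant word metric induced by $S$. Both assertions of the theorem then reduce to verifying (i) that $S$ generates $\Homeo_0(M)$, and (ii) that $S$ is coarsely bounded in $\Homeo_0(M)$.

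Step (i) is the classical fragmentation theorem of Edwards--Kirby for homeomorphism groups of compact topological manifolds: every element of $\Homeo_0(M)$ factors as a finite product of homeomorphisms each supported in some ball of the cover $\mathcal{B}$. This immediately makes $d_\mathcal{B}$ finite-valued and hence a genuine right-invariant metric. The substantive work lies in step (ii). Since $\mathcal{B}$ is finite and finite unions of coarsely bounded sets remain coarsely bounded, it suffices to fix a single $B \in \mathcal{B}$ and show that $H_B := \{f \in \Homeo_0(M) : \supp(f) \subset B\}$ is coarsely bounded.

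The key construction is a ``contracting'' element $\phi \in \Homeo_0(M)$ sending $\overline{B}$ strictly inside $B$, with a single attracting fixed point $p \in B$ whose basin of attraction contains all of $B$; such $\phi$ is produced in a chart by interpolating between a compactly-supported linear contraction near $p$ and the identity outside $B$, so that $\diam \phi^N(B) \to 0$. Given any identity neighborhood $V \subset \Homeo_0(M)$ in the compact-open topology, one can find $\delta > 0$ such that every homeomorphism supported in a subset of $M$ of diameter less than $\delta$ automatically lies in $V$. Choosing $N$ with $\diam \phi^N(B) < \delta$, every $f \in H_B$ satisfies $\phi^N f \phi^{-N} \in V$, whence
\[
f \;=\; \phi^{-N} \bigl(\phi^N f \phi^{-N}\bigr) \phi^N \;\in\; \phi^{-N} V \phi^N \;\subset\; (FV)^2
\]
for $F := \{e,\phi^N,\phi^{-N}\}$. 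This exhibits $H_B$, and therefore $S$, as coarsely bounded in $\Homeo_0(M)$.

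The main obstacle throughout is step (ii): one must settle on a usable characterization of coarse boundedness, produce a contracting $\phi$ (with some extra care in low dimensions or on manifolds with boundary), and verify that continuity of an arbitrary right-invariant metric against the compact-open topology forces homeomorphisms with small support to be close to the identity. Once (i) and (ii) are in place, the machinery of \cite{large scale} simultaneously furnishes a maximal metric on $\Homeo_0(M)$ and identifies its quasi-isometry type with $d_\mathcal{B}$, so both conclusions of the theorem follow.
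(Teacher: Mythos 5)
Your overall reduction is sound and structurally close to the paper's: generation (and hence finiteness of $d_\mathcal{B}$) comes from Edwards--Kirby fragmentation, and everything reduces, via the framework of \cite{large scale}, to showing that $H_B=\{f\in\Homeo_0(M)\mid \supp(f)\subseteq B\}$ is coarsely bounded for each ball $B$ of the cover. The genuine gap is in your proof of this last point. First, the contracting element you describe cannot exist as stated: a homeomorphism that is the identity outside $B$ satisfies $\phi(B)=B$, hence $\diam\phi^N(B)=\diam B$ for all $N$, and it certainly cannot send $\overline{B}$ strictly inside $B$ since it fixes $\partial B$ pointwise. More seriously, even if you allow $\phi$ (or a separate shrinking homeomorphism $h_\delta\in\Homeo_0(M)$ for each $\delta$) to move points outside $B$, no such map exists for a general embedded open ball: the theorem is asserted for an \emph{arbitrary} finite cover by open balls, and these may be dense or wildly embedded in $M$. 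For instance $B=\S^n\setminus\{x\}$ is an embedded open ball --- and the paper's own application to $\Homeo_0(\S^n)$ (Proposition \ref{Sn OB prop}) uses exactly such a cover --- yet every homeomorphic image of it is dense, so $\diam h(B)=\diam \S^n$ for every $h\in\Homeo(\S^n)$. Since the supports of elements of $H_B$ exhaust the compact subsets of $B$, your conjugation trick would require $\diam h(B)$ itself to be small, which is impossible here; so coarse boundedness of $H_B$ does not follow from your argument.

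The paper avoids precisely this problem by proving an \emph{intrinsic} statement: $\Homeo_\partial(\B^n)$, and hence the group of homeomorphisms compactly supported in an open ball, has property (OB) in itself, via the Alexander trick combined with conjugations that shrink only the (concentric, tame) support of the Alexander isotopy inside the ball (Lemma \ref{Bn lemma}). This makes no reference to how $B$ sits inside $M$, and property (OB) of $G_i=\{g\mid\supp(g)\subseteq B_i\}$ then gives $G_i\subseteq V^{n_i}$ for every identity neighbourhood $V$, which is how Theorem \ref{fragmentation norm thm} handles an arbitrary cover; ambient shrinking is only used where one is free to conjugate fragmentation pieces into a conveniently chosen small ball. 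To repair your step (ii), either prove coarse boundedness of $H_B$ intrinsically by the Alexander-trick argument, or establish the result first for one tame cover and then compare covers (Militon's proposition quoted in the paper). The rest of your outline --- small support forces $d_\infty$-closeness to the identity, finite unions of coarsely bounded sets are coarsely bounded, and the appeal to \cite{large scale} identifying the word metric of a coarsely bounded generating set with the maximal metric --- is fine.
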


We also show that the large-scale geometry of $\Homeo_0(M)$ reflects both the topology of $M$ and the dynamics of group actions on $M$.   For instance, on the topological side, we have the following theorem, strengthening earlier results of \cite{OB} and \cite{CF}.  

\begin{thm}[Compare with \cite{CF}]
For any $n$, $\Homeo_0(\S^n)$ is bounded, i.e. quasi-isometric to a point.   By contrast, as soon as $\dim(M) > 1$ and $M$ has infinite fundamental group, $\Homeo_0(M)$ contains a coarsely embedded copy of the Banach space $C([0,1])$; in particular, $\Homeo_0(M)$ is coarsely universal among all separable metric spaces
\end{thm}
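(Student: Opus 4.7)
The theorem has two parts. For the boundedness of $\Homeo_0(\S^n)$, by Theorem \ref{loc OB} it suffices to bound the fragmentation metric $d_\mathcal{B}$ with respect to any finite ball cover; I would choose $\mathcal{B} = \{B_0, B_1\}$ consisting of two open balls that are slight enlargements of complementary hemispheres. Following the strategy of Calegari and Freedman, I would exhibit a ``north--south'' dynamical homeomorphism $h \in \Homeo_0(\S^n)$ whose positive iterates contract any compact subset of the complement of the north pole into arbitrarily small neighborhoods of the south pole. A swindle argument combined with straightforward fragmentation of small-support elements then shows every $f \in \Homeo_0(\S^n)$ is a product of a uniformly bounded number of conjugates of $h$ and elements supported in balls of $\mathcal{B}$, bounding $d_\mathcal{B}(f, \id)$ independently of $f$.

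For the coarse embedding of $C([0,1])$, my approach is to construct an explicit family of shears and bound the fragmentation metric below via displacement in a suitable cover. Fix an embedded loop $\gamma$ representing an element of infinite order in $\pi_1(M)$, together with a trivial tubular neighborhood $\mathcal{N} \cong S^1 \times D^{n-1}$ of $\gamma$; such $\mathcal{N}$ exists (possibly after passing to $\gamma^2$ to trivialize the normal bundle) because $\dim M > 1$. For each continuous $\phi : D^{n-1} \to \R$ vanishing on $\partial D^{n-1}$, define $\sigma_\phi \in \Homeo_0(M)$ to be the identity outside $\mathcal{N}$ and $\sigma_\phi(x, y) = (x + \phi(y), y)$ on $\mathcal{N}$; since $\sigma_\phi \sigma_\psi = \sigma_{\phi+\psi}$, this defines an additive homomorphism $\sigma : C_0(\mathrm{int}(D^{n-1})) \to \Homeo_0(M)$.

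For the lower bound, pass to the universal cover $\tilde M$ (or a suitable cyclic cover) and pick a lift $\tilde{\mathcal{N}} \cong \R \times D^{n-1}$ of $\mathcal{N}$. The natural lift of $\sigma_\phi$ acts as $(t, y) \mapsto (t + \phi(y), y)$ on $\tilde{\mathcal{N}}$, and projecting the displacement onto the $\R$-factor produces a subadditive, deck-invariant ``oscillation'' quantity $\mathrm{osc}(\sigma_\phi) = \max_y \phi(y) - \min_y \phi(y)$, comparable to $\|\phi\|_\infty$ since $\phi$ vanishes on the boundary. Any element supported in a single ball $B \in \mathcal{B}$ lifts to a map with support of uniformly bounded diameter $C$ in each component of the preimage, contributing oscillation at most $2C$; subadditivity then yields the Lipschitz lower bound $d_\mathcal{B}(\sigma_\phi, \sigma_\psi) \geq c \|\phi - \psi\|_\infty$. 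For the upper bound, the scaling trick $\sigma_\phi = (\sigma_{\phi/N})^N$ with $N = \lceil 2\|\phi\|_\infty \rceil$ reduces matters to bounding the fragmentation norm of $\sigma_{\phi/N}$, which satisfies $\|\phi/N\|_\infty \leq 1/2$ and is therefore $C^0$-close to the identity; a standard fragmentation lemma for $C^0$-small homeomorphisms gives $d_\mathcal{B}(\sigma_{\phi/N}, \id) \leq K$ for a constant $K$ depending only on $\mathcal{B}$, yielding $d_\mathcal{B}(\sigma_\phi, \id) \leq KN \lesssim \|\phi\|_\infty$. Specializing $\phi$ to depend on only a single coordinate of $D^{n-1}$ produces a coarsely embedded copy of $C_0((0,1))$, which is Banach-isomorphic to $C([0,1])$, and coarse universality then follows from Aharoni's theorem providing a Lipschitz embedding of every separable metric space into $c_0 \subset C([0,1])$.

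The main expected technical obstacles are twofold. First, the fragmentation bound for $C^0$-small shears requires a careful combination of the tube-like support of $\sigma_{\phi/N}$ with the ambient cover $\mathcal{B}$ of $M$, analogous to classical fragmentation lemmas but adapted to $C^0$-small perturbations of the identity. Second, defining the oscillation invariant robustly --- handling the choice of lift and the fact that a global $\R$-valued projection on $\tilde M$ may not exist --- requires passing to an appropriate $\Z$-cover of $M$ determined by $\gamma$, or working locally near $\tilde{\mathcal{N}}$ and exploiting the support structure of the shears.
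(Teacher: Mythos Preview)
Your approaches to both halves are sound but diverge from the paper's in instructive ways.

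For $\Homeo_0(\S^n)$, the paper deliberately avoids the Calegari--Freedman north--south swindle (which relies on the annulus conjecture) and instead gives a proof using only Kirby's torus trick: any $f$ is first written as a product of nine maps each fixing two of three chosen points, and the torus trick then splits each such map into two pieces fixing a \emph{neighbourhood} of one of the points, giving a uniform bound of $18$ for the fragmentation norm with respect to the cover by three punctured spheres. Your CF-style argument is valid but forfeits the paper's point of providing a more elementary route.

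For the $C([0,1])$ embedding, your shear construction along a tube is essentially the paper's, and your scaling trick $\sigma_\phi = (\sigma_{\phi/N})^N$ is a hands-on version of the paper's observation that a continuous group embedding is automatically Lipschitz for large distances. Your lower bound via the $\Z$-cover determined by $\gamma$ is in fact a sharpening of the paper's argument: since simply connected balls lift evenly to \emph{any} cover, the displacement estimate (the paper's Lemma~\ref{militon lemma}) applies there and yields a bi-Lipschitz lower bound irrespective of whether $\langle\gamma\rangle$ is distorted in $\pi_1(M)$, whereas the paper, working in the universal cover, downgrades to a coarse embedding in the distorted case.

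There is, however, one genuine gap. You begin by fixing a loop of infinite order, but the hypothesis is only that $\pi_1(M)$ is infinite; whether every infinite finitely presented group contains an element of infinite order is open. The paper covers this residual case separately: by K\"onig's lemma the Cayley graph of $\pi_1(M)$ contains an isometrically embedded copy of $\Z$, which (when $\dim M>2$) can be realised as an embedded line in $M$ with a tubular neighbourhood $\B^{n-1}\times\R$, and one runs the shear construction along this non-closed curve using a flow conjugate to translation that decays at the ends. A smaller gloss: in dimension $2$ an arbitrary infinite-order class need not have a \emph{simple} representative and general position does not help; the paper invokes the classification of surfaces here, whereas your justification ``because $\dim M>1$'' only covers $\dim M\geqslant 3$.
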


On the dynamical side, ``distorted" subgroups of homeomorphism groups are known to have interesting dynamical properties (see e.g. \cite{CF}, \cite{FH}, \cite{Militon2}, \cite{Polterovich}), and our large-scale framework gives a much more natural definition of \emph{distortion}. Namely, a subgroup $G \leqslant \Homeo_0(M)$ is distorted if the inclusion $G \hookrightarrow \Homeo_0(M)$ is not a quasi-isometric embedding. 
\bigskip

\noindent \textbf{Extensions and covers.}
In Section \ref{covers sec}, we discuss the relationship between the large scale geometry of $\Homeo_0(M)$ and the group of \emph{lifts} of such homeomorphisms to a cover of $M$.   The question of lifts to the universal cover of $M$ arises naturally in computations involving the fragmentation metric.  We show that the group of all lifts to any cover also admits a maximal metric, hence well defined quasi-isometry type, and give conditions on when the group of lifts to the universal cover is quasi-isometric to the product $\pi_1(M) \times \Homeo_0(M)$.

\begin{thm} \label{section thm}
Let $M$ be a manifold.  There is a natural, central subgroup $A \leqslant  \pi_1(M)$ such that, whenever the quotient map $\pi_1(M) \to \pi_1(M)/A$ admits a bornologous section, then the group of lifts of homeomorphisms to $\tilde{M}$ is quasi-isometric to the product $\Homeo_0(M) \times \pi_1(M)$.
\end{thm}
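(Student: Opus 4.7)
Let $\widetilde{G}$ denote the group of lifts of elements of $\Homeo_0(M)$ to the universal cover $\tilde{M}$; by earlier results in this section $\widetilde{G}$ admits a maximal metric, given up to quasi-isometry by the fragmentation metric with respect to the lifted cover. The deck group $\pi_1(M)$ embeds as a central subgroup of $\widetilde{G}$, since deck transformations commute with every lift, giving a central extension
$$1 \to \pi_1(M) \to \widetilde{G} \xrightarrow{p} \Homeo_0(M) \to 1.$$
I take $A$ to be the image of the evaluation-induced homomorphism $\mathrm{ev}_* : \pi_1(\Homeo_0(M), \id) \to \pi_1(M, x_0)$, where $\mathrm{ev}(f) := f(x_0)$ for a fixed basepoint. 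This image is central in $\pi_1(M)$ by the classical argument: for a loop $\phi_t$ of homeomorphisms and any loop $\alpha_s$ at $x_0$, the two-parameter family $(t,s) \mapsto \phi_t(\alpha_s)$ null-homotopes the commutator of $\mathrm{ev}_*[\phi_t]$ with $[\alpha_s]$.

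The first main step is to construct a bornologous section of the quotient $\widetilde{G}/A \to \Homeo_0(M)$ via fragmentation. For each ball $B \in \mathcal{B}$, fix a specific preimage $\tilde{B} \subset \tilde{M}$; then any homeomorphism supported in $B$ has a canonical lift supported in $\tilde{B}$. Given a fragmentation $f = f_1 \cdots f_k$, the product of canonical lifts $\tilde{f}_1 \cdots \tilde{f}_k \in \widetilde{G}$ is a lift of $f$. Two distinct fragmentations of the same $f$ yield lifts whose ratio is the $\mathrm{ev}_*$-image of the loop in $\Homeo_0(M)$ obtained by concatenating one fragmentation with the reverse of the other, and hence lies in $A$. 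Therefore $f \mapsto [\tilde{f}_1 \cdots \tilde{f}_k]$ descends to a well-defined section $\tilde{s} : \Homeo_0(M) \to \widetilde{G}/A$, which is $1$-Lipschitz, since a $k$-fragmentation of $fg^{-1}$ lifts to a length-$k$ word in $\widetilde{G}$. Consequently $\widetilde{G}/A$ is quasi-isometric to $\Homeo_0(M) \times \pi_1(M)/A$.

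The second step invokes the hypothesis to pass from $\widetilde{G}/A$ to $\widetilde{G}$. Choose any set-theoretic lift $\hat{s} : \Homeo_0(M) \to \widetilde{G}$ of $\tilde{s}$ (pick one fragmentation per $f$), and consider
$$\Phi : \Homeo_0(M) \times \pi_1(M) \to \widetilde{G}, \qquad \Phi(f, \alpha) := \alpha \cdot \hat{s}(f),$$
a set-theoretic bijection. Centrality of $\pi_1(M)$ gives
$$\Phi(f,\alpha)\Phi(g,\beta)^{-1} = (\alpha \beta^{-1})\, \hat{s}(f) \hat{s}(g)^{-1}.$$
The first factor is controlled via the coarse embedding of $\pi_1(M)$ in $\widetilde{G}$ established earlier in this section. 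The second factor has bounded image in $\widetilde{G}/A$, so it differs by an element of $A$ from a genuinely bounded element of $\widetilde{G}$. The hypothesized bornologous section $\rho : \pi_1(M)/A \to \pi_1(M)$ lets us refine $\hat{s}$ so that this $A$-correction is pinned down by $\rho$ applied to the $\pi_1(M)/A$-class recorded by $\tilde{s}$, hence depends bornologously on $(f, g)$. A symmetric argument handles $\Phi^{-1}$, yielding the desired quasi-isometry.

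The \textbf{main obstacle} is this last step: bornologously lifting the fragmentation section from $\widetilde{G}/A$ to $\widetilde{G}$. Any set-theoretic lift of $\tilde{s}$ carries an $A$-valued ambiguity that constitutes a coarse $2$-cocycle on $\Homeo_0(M)$ valued in $A$, and this cocycle need not be bornologously trivializable without additional coarse input about how $A$ sits in $\pi_1(M)$. The bornologous section $\rho$ provides exactly this input, canonicalizing the representative in each $A$-coset and trivializing the cocycle up to bounded error.
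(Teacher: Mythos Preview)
Your overall architecture matches the paper's---reduce to Proposition~\ref{commuting subgroups} by exhibiting a bornologous section $\Homeo_0(M)\to H$---but there are errors and a genuine gap at the decisive point.

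First, two corrections. The deck group $\pi_1(M)$ is \emph{not} central in $\widetilde G$ unless $\pi_1(M)$ is abelian: deck transformations are themselves lifts, so your claim would force them to commute with each other. What is true (and what the paper uses) is that $\pi_1(M)$ commutes with the path-connected subgroup $H$ of endpoints of lifted isotopies; since your canonical fragmentation lifts land in $H$, this suffices for your computation of $\Phi(f,\alpha)\Phi(g,\beta)^{-1}$. Second, the coarse embedding of $\pi_1(M)$ in $\widetilde G$ is not established prior to this theorem; the paper proves it inside the proof by building an explicit compatible right-invariant metric $\partial$ on $\widetilde G$ as a Hausdorff quotient metric on $(H\times\pi_1(M))/N$ and checking directly that $\partial$-bounded sequences in $\pi_1(M)$ are $d_{\pi_1(M)}$-bounded.

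The real gap is your second step. You correctly isolate the obstruction as an $A$-valued coarse $2$-cocycle, but your proposed resolution---``pinned down by $\rho$ applied to the $\pi_1(M)/A$-class recorded by $\tilde s$''---does not parse: $\tilde s$ takes values in $H/A\cong\Homeo_0(M)$, not in $\pi_1(M)/A$, so there is no $\pi_1(M)/A$-class for $\rho$ to act on, and no mechanism is given by which a section of $\pi_1(M)\to\pi_1(M)/A$ bounds an element of $A$. The paper's mechanism is geometric and not obviously recoverable from your outline. One sets $S=\psi(\pi_1(M)/A)$, so $S^{-1}D$ is a fundamental domain for the action $A\curvearrowright\tilde M$, and then \emph{defines} $\phi(g)$ as the unique lift in $H$ with $\phi(g)x_0\in S^{-1}D$. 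If $gf^{-1}$ lies in a relatively (OB) set, one finds $a\in A$ with $a\phi(g)\phi(f)^{-1}$ bounded in $H$; writing $\phi(g)x_0=s^{-1}x$, $\phi(f)x_0=t^{-1}y$ with $s,t\in S$ and $x,y\in D$ yields a uniform bound on $d_{\pi_1(M)}(a^{-1}s,t)$. At this point the bornologous hypothesis on $\psi$ enters, not as a section but via its consequence (from Proposition~\ref{commuting subgroups} applied to $A\to\pi_1(M)\to\pi_1(M)/A$) that the projection $as\mapsto a$ from $\pi_1(M)$ to $A$ is bornologous; this forces $a$ into a fixed finite set and gives $\phi(g)\phi(f)^{-1}\in FV^m$. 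Your sketch contains neither the fundamental-domain normalization nor the translation of the hypothesis into a bornologous projection onto $A$, and without one of these the $A$-cocycle is simply uncontrolled.
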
 
The subgroup $A$ is described in Section \ref{covers sec}.
In essence, Theorem \ref{section thm} shows that the large-scale geometry of the fundamental group of a manifold is reflected in the large scale geometry of the group of $\pi_1$--equivariant homeomorphisms of its universal cover.  We illustrate this with several examples.

The main tool in the proof of Theorem \ref{section thm} is a very general result on existence of bornologous sections for quotients of topological groups.

\begin{prop}
Suppose $G$ is a topological group generated by two commuting subgroups $K$ and $H$, where $K$ is coarsely embedded, and the quotient map $\pi\colon G\til G/K$ admits a bornologous section  $\phi\colon G/K\til H$. Then $K\times G/K$ is coarsely equivalent to $G$ via the map
$$
(x,f)\mapsto x\phi(f).
$$
\end{prop}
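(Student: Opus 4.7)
The plan is to produce a bornologous two-sided inverse $\Psi\colon G\til K\times G/K$ to the map $\Phi(x,f):=x\phi(f)$, from which it follows automatically that $\Phi$ is a coarse equivalence.  Observe first that since $K$ commutes with $H$ and $G$ is generated by $K\cup H$, conjugation of $K$ by any $h\in H$ is trivial, so $K$ is normal in $G$; consequently $G/K$ is a topological group, $\pi\colon G\til G/K$ is a homomorphism, and $G=KH$, so every $g\in G$ admits a (non-unique) decomposition $g=xh$ with $x\in K$ and $h\in H$.  The natural candidate inverse is
$$\Psi(g):=\bigl(g\phi(\pi(g))^{-1},\,\pi(g)\bigr).$$
Because $\pi\circ\phi=\id$, the first coordinate lies in $\ker\pi=K$, so $\Psi$ is well-defined, and a short computation verifies $\Phi\circ\Psi=\id_G$ and $\Psi\circ\Phi=\id_{K\times G/K}$.

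That $\Phi$ is bornologous is the easy direction.  When $(x,f)$ is close to $(x',f')$ in $K\times G/K$, the identity
$$\Phi(x,f)\Phi(x',f')^{-1}=x\phi(f)\phi(f')^{-1}x'^{-1}=xx'^{-1}\cdot\phi(f)\phi(f')^{-1},$$
which uses that $\phi(f)\phi(f')^{-1}\in H$ commutes with $x'^{-1}\in K$, exhibits the ratio as a product of two elements bounded in $G$: one by coarse embeddedness of $K$, the other by bornologousness of $\phi$.

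The bornologousness of $\Psi$ is the heart of the argument.  Given $gg'^{-1}$ bounded in $G$, the second coordinate is controlled because $\pi$ is $1$-Lipschitz for the quotient metric.  For the first coordinate, set $h:=\phi(\pi(g))$, $h':=\phi(\pi(g'))$, $x:=gh^{-1}\in K$, and $x':=g'h'^{-1}\in K$, so that $g=xh$ and $g'=x'h'$.  The central calculation is
$$xx'^{-1}=gh^{-1}h'g'^{-1}=gg'^{-1}\cdot g'h^{-1}h'g'^{-1};$$
expanding $g'=x'h'$ in the final factor yields $g'h^{-1}h'g'^{-1}=x'h'h^{-1}h'h'^{-1}x'^{-1}=x'h'h^{-1}x'^{-1}=h'h^{-1}$, where the last equality uses that $h'h^{-1}\in H$ commutes with $x'^{-1}\in K$.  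Hence the expression collapses to the very clean formula $xx'^{-1}=gg'^{-1}\cdot h'h^{-1}$, both of whose factors are bounded in $G$ ($gg'^{-1}$ by assumption, and $h'h^{-1}=\phi(\pi(g'))\phi(\pi(g))^{-1}$ by bornologousness of $\phi$).  Since $xx'^{-1}$ lies in $K$, the coarse embedding hypothesis promotes this to boundedness in $K$.  The main obstacle is precisely this last calculation: conjugation by a generic $g'\in G$ does not preserve $H$, so one cannot naively bound $g'h^{-1}h'g'^{-1}$.  The trick is to expose $g'$ in its $KH$-decomposition so that the $H$-factor cancels against the $H$-valued conjugated element while the $K$-factor commutes past and disappears, after which coarse embeddedness of $K$ transports the resulting boundedness in $G$ back into $K$.
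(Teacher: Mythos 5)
Your proof is correct, and the map you invert is exactly the paper's: since $\tilde g:=\phi\pi(g)\in H$ commutes with the $K$-part of $g$, your first coordinate $g\tilde g^{-1}$ coincides with the paper's $\tilde g^{-1}g$, and the overall strategy (exhibit a bornologous two-sided inverse, with the hard point being bornologousness of $g\mapsto g\tilde g^{-1}$ as a map into $K$) is the same. Where you genuinely differ is in how that hard step is carried out. The paper first proves a structural fact---every relatively (OB) set $A\subseteq G$ is contained in a product $BC$ with $B\subseteq H$ and $C\subseteq K$ relatively (OB) in $H$ and $K$ respectively---and then, given $fg^{-1}\in A\subseteq BC$, extracts an auxiliary element $a\in H\cap K$ to place $(\tilde f^{-1}f)(\tilde g^{-1}g)^{-1}$ in $D^{-1}BC\cap K$. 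Your identity $xx'^{-1}=gg'^{-1}\cdot\tilde g'\tilde g^{-1}$ shortcuts all of this: the difference of $K$-coordinates is written directly as the product of the $G$-difference and an $H$-difference controlled by bornologousness of $\phi$, after which coarse embeddedness of $K$ is applied at exactly the same point as in the paper (to pull boundedness in $G$ back into $K$). Both arguments use the hypotheses identically ($[K,H]=1$, bornologousness of $\phi$ and $\pi$, coarse embeddedness of $K$); yours is shorter and more elementary, at the cost of not recording the paper's intermediate decomposition $A\subseteq BC$, which has some independent interest but is not needed for the proposition. One cosmetic remark: in the easy direction you invoke coarse embeddedness of $K$ to pass from boundedness of $xx'^{-1}$ in $K$ to boundedness in $G$; that direction only needs that the inclusion $K\hookrightarrow G$ is a continuous homomorphism, hence bornologous---coarse embeddedness is the converse implication, which is precisely what your hard direction uses.
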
  

As a consequence, this gives a generalization of, and in fact a new proof of, a theorem of Gersten \cite{Gersten}.  We also discuss related notions of \emph{bounded cohomology} and \emph{quasimorphisms} in this general context.  
\bigskip

\noindent \textbf{Groups acting on manifolds.}
Finally, Section \ref{group actions sec} discusses the dynamics of group actions from the perspective of large scale geometry.  While it remains an open problem to find any example of a finitely generated, torsion-free group that does not embed into $\Homeo_0(M)$ -- for any fixed manifold $M$  of dimension at least two -- restricting to \emph{quasi-isometric embeddings} makes the problem significantly more tractable.   Thus, we propose the study of quasi-isometric embeddings as new approach to the ``$C^0$ Zimmer program".   As evidence that this approach should be fruitful, we prove the following.  

\begin{thm} 
There exists a torsion-free, compactly generated group $G$ and manifold $M$, such that $G$ embeds continuously in $\Homeo(M)$, but $G$ does not admit any continuous, quasi-isometric isomorphic embedding in $\Homeo_0(M)$.  
\end{thm}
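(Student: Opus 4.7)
The plan is to exhibit an explicit pair $(G,M)$ for which the earlier boundedness result for spheres produces an immediate obstruction. Take $G = \Z$, equipped with the discrete topology, and $M = \S^n$ for some $n \geqslant 1$.

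The group $\Z$ is manifestly torsion-free and compactly (equivalently, finitely) generated, with maximal metric quasi-isometric to $d(j,k) = |j-k|$. To produce a continuous embedding $\Z \hookrightarrow \Homeo(\S^n)$, I would pick any $h \in \Homeo_0(\S^n)$ of infinite order --- for instance, an irrational rotation about a fixed axis --- and set $k \mapsto h^k$. Continuity is automatic since $\Z$ is discrete, and the image in fact lies inside $\Homeo_0(\S^n)$, so both the hypothesis and the conclusion are meaningfully tested.

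Suppose, for contradiction, that $\varphi\colon \Z \hookrightarrow \Homeo_0(\S^n)$ is a continuous, quasi-isometric isomorphic embedding. By the earlier theorem, $\Homeo_0(\S^n)$ is bounded (quasi-isometric to a point) in its maximal metric, so the image $\varphi(\Z)$ has bounded diameter. On the other hand, any quasi-isometric embedding $\varphi$ satisfies a lower bound of the form
\[
 d_{\Homeo_0(\S^n)}\!\bigl(\varphi(k),\varphi(0)\bigr) \;\geqslant\; \tfrac{1}{L}|k| - C,
\]
forcing its image to have infinite diameter. This contradiction rules out the existence of $\varphi$.

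The only substantive input is the earlier boundedness theorem for $\Homeo_0(\S^n)$; once that is granted, the rest is immediate, so there is no real technical obstacle. The main conceptual point --- and what one should emphasize in exposition --- is that even a group as simple as $\Z$ can act by homeomorphisms on a manifold while admitting no large-scale faithful action, illustrating how the quasi-isometric viewpoint refines the $C^0$ Zimmer-type question of \emph{which} groups act on $M$ into the finer question of \emph{how} they may act. A more substantive instance, should one prefer to avoid the degenerate large-scale geometry of spheres, would replace $\S^n$ by a manifold whose homeomorphism group has controlled but nontrivial coarse geometry and replace $\Z$ by a group whose distortion behaviour is incompatible with the fragmentation metric on $\Homeo_0(M)$; but for the stated existential claim the sphere example already suffices.
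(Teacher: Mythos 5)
Your example does verify the theorem as literally stated: by Proposition \ref{Sn OB prop}, $\Homeo_0(\S^n)$ has property (OB), hence is quasi-isometric to a point, so no group whose maximal metric is unbounded --- in particular $\Z$ --- admits a quasi-isometric embedding into it, while $\Z$ certainly maps continuously into $\Homeo(\S^n)$. Two caveats. First, if ``embeds'' is to mean an isomorphic (topological group) embedding, an irrational rotation is the wrong witness: the cyclic group it generates is not discrete in $\Homeo_0(\S^n)$ (its closure is a circle of rotations), so $k\mapsto h^k$ is continuous and injective but not an embedding onto its image. Use instead, say, a homeomorphism with north--south dynamics, whose powers form a closed discrete subgroup; this is a trivial repair, but as written your claim that the hypothesis is ``meaningfully tested'' by an embedding into $\Homeo_0(\S^n)$ is not quite right. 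Second, and more substantively, your route is genuinely different from the paper's and much weaker in content. The paper proves (Proposition \ref{no QI}) that $G=\Z\ltimes\R$ sits as a closed subgroup of $\Homeo_0(\A)$, where the target is \emph{unbounded} --- it even contains quasi-isometrically embedded copies of $\Z$ and $\R$ --- and yet no continuous isomorphic embedding of $G$ into $\Homeo_0(\A)$ can be a quasi-isometric embedding; the obstruction there is dynamical, obtained by combining Militon's two-sided estimate (Theorem \ref{militon thm}) with the rotation-set results of Misiurewicz--Ziemian and Franks' fixed point theorem to produce invariant essential circles, and then contradicting continuity using the relation $zrz^{-1}=-r$. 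Your sphere example exploits only the degenerate coarse geometry of the target, so it rules out quasi-isometric embeddings of \emph{every} unbounded group at once and says nothing about how the algebra and dynamics of a specific group interact with a nontrivial quasi-isometry type --- which is exactly the point of the $C^0$ Zimmer discussion this theorem is meant to support, as you yourself acknowledge in your closing remark. In short: adequate for the bare existential statement, but it bypasses the theorem's intended content, which is carried by the annulus construction.
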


The proof of this theorem uses \emph{rotation sets} as a way to link the dynamics of group actions with the geometry of the group.   Although the group $G$ constructed in the proof is not very complicated, it does contain a copy of $\R$.  It would be very interesting to see a discrete example in its place.


\section{Preliminaries} \label{prelim sec}

\subsection{Coarse structures on topological groups} 

As is well-known, every topological group $G$ has a natural right-invariant uniform structure, which by a characterization of A. Weil is that induced by the family of all continuous right-invariant {\em \'ecarts} $d$ on $G$, i.e., metrics except that we may have $d(g,f)=0$ for $g\neq f$. In the same way, we may equip $G$ with a canonical right-invariant  coarse structure.  Here a {\em coarse space} (in the sense of \cite{roe}) 
is a set $X$ equipped with a family $\ku E$ of subsets $E\subseteq X\times X$ (called {\em coarse entourages}) that is closed under finite unions, taking subsets, inverses,  concatenation as given by $E\circ F=\{(x,y)\del \e z\; (x,z)\in E\;\&\; (z,y)\in F\}$, and so that the diagonal $\Delta$ belongs to $\ku E$. For example, if $d$ is an \'ecart on $X$, the associated coarse structure $\ku E_d$ is that generated by the basic entourages
$$
E_\alpha=\{(x,y)\del d(x,y)<\alpha\},
$$
for $\alpha<\infty$. Thus, if $G$ is any topological group, we may define its {\em right-invariant coarse structure} $\ku E_R$ by
$$
\ku E_R=\bigcap_d\ku E_d,
$$
where the intersection ranges over all continuous right-invariant \'ecarts $d$ on $G$.

Associated with the coarse structure $\ku E_R$ is the ideal of coarsely bounded subsets. Namely, a subset $A\subseteq G$ is said to be {\em relatively (OB) in $G$} if ${\rm diam}_d(A)<\infty$ for all continuous right-invariant \'ecarts $d$ on $G$. In the case that $G$ is {\em Polish}, i.e., separable and completely metrisable, this is equivalent to the following definition. 
\begin{defi} \label{OB def}
A subset $A\subseteq G$ is {\em relatively (OB) in $G$} if, for every identity neighbourhood $V\subseteq G$, there is a finite set $F \subseteq G$, and $n$ so that $A\subseteq (FV)^n$
\end{defi}
If $G$ is connected, the hypothesis on the finite set $F$ may be omitted as $F$ may be absorbed into a power of $V$. 
``OB" here stands for \emph{orbites born\'ees}, referring to the fact that for any continuous isometric action of $G$ on a metric space, the orbit of any point under a relatively (OB) subset of $G$ has finite diameter. 

For a Polish group $G$, having a relatively (OB) identity neighbourhood $V\subseteq G$ is equivalent to the condition that the coarse structure $\ku E_R$ is given by a single compatible right-invariant metric $d$, in the sense that $\ku E_R=\ku E_d$.  
In this case, we say that the metric $d$ is {\em coarsely proper}. Moreover, a compatible right-invariant metric $d$ is coarsely proper exactly when the  $d$-bounded subsets coincide with the relatively (OB) sets in $G$. If furthermore $G$ is {\em (OB) generated}, that is generated by a relatively (OB) subset $A\subseteq G$, we may define the corresponding right-invariant word metric $\rho_A$ on $G$ by
$$
\rho_A(g,f)=\min( k\del \e a_1,\ldots, a_k\in A\colon g=a_1\cdots a_kf).
$$
In this case, $G$ admits a compatible right-invariant metric $d$ that is {\em quasi-isometric} to the word metric $\rho_A$, i.e., so that
$$
\frac 1K\cdot \rho_A-C\leqslant d\leqslant K\cdot \rho_A+C
$$
for some constants $K \geq 1$ and $C \geq 0$. Such a metric $d$ is said to be {\em maximal} and is, up to quasi-isometry, independent of the choice of $A$. In fact, $A$ can always be taken to be an identity neighbourhood in $G$.

In contradistinction to the case of locally compact groups, many large Polish groups happen to have {\em property (OB)}, i.e., are relatively (OB) in themselves. This simply means that they have finite diameter with respect to every compatible right-invariant metric and therefore are quasi-isometric to a point. This can be seen as a metric generalization of compactness, as, within the class of locally compact second countable groups, this simply characterizes compactness.

Once this maximal metric, quasi-isometric to the word metrics $\rho_V$ of all relatively (OB)  identity neighbourhoods, is identified, the usual language and toolset of geometric group theory can be applied almost ad verbum, though occasionally additional care is required. For example, a map $\phi\colon G\til H$ between Polish groups is {\em bornologous} if, for every coarse entourage $E$ on $G$, the subsets $(\phi\times \phi)E$ give a coarse entourage for $H$. If $d_G$ and $d_H$ are coarsely proper metrics on $G$ and $H$, this simply means that, for every $\alpha<\infty$, there is $\beta<\infty$ so that 
$$
d_G(g,f)<\alpha\saa d_H(\phi(g),\phi(f))<\beta.
$$

We should note that, in the present paper, we have chosen to work with the right-invariant coarse structure $\mathcal E_R$, as opposed to the left-invariant coarse structure $\mathcal E_L$ defined analogously. This is due to the fact that the most natural metrics on homeomorphism groups are right-invariant. (These are the maximal displacement metrics, $d_\infty$, defined in the next section.) Moreover, this choice causes little conflict with the framework of \cite{large scale}, since the inversion map $g\mapsto g^{-1}$ is an isomorphism of the two coarse structures. 


\subsection{The local property (OB) for homeomorphism groups}

The goal of this section is to prove the following theorem, which is our starting point for the study of the large scale geometry of homeomorphism groups.  
\begin{thm} \label{Homeo OB}
Let $M$ be a compact manifold.  Then $\Homeo_0(M)$ has the local property (OB), hence a well defined quasi-isometry type.
\end{thm}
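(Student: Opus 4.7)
The plan is to exhibit an explicit open identity neighborhood $U_0\subseteq\Homeo_0(M)$ that is relatively (OB), using the classical fragmentation lemma combined with a conjugation-contraction argument inside each chart. Once $U_0$ is produced, the ``well-defined quasi-isometry type'' conclusion is automatic: since $\Homeo_0(M)$ is connected, any open identity neighborhood generates the group, so $U_0$ is a relatively (OB) generating set, yielding (OB)-generation and hence a maximal metric by the framework recalled in Section \ref{prelim sec}.

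Fix a finite cover $\{B_1',\ldots,B_k'\}$ of $M$ by open embedded balls whose closures are compact and contained in slightly larger open embedded balls $B_i\supseteq\overline{B_i'}$. Let $G_i=\{g\in\Homeo_0(M):\supp(g)\subseteq\overline{B_i'}\}$. The first ingredient is the Edwards--Kirby fragmentation lemma applied to the cover $\{B_i'\}$: it produces an open neighborhood $U_0$ of the identity (in the compact-open topology) such that every $f\in U_0$ decomposes as $f=g_1g_2\cdots g_k$ with $g_i\in G_i$. It then suffices to show that each $G_i$ is relatively (OB) in $\Homeo_0(M)$.

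For this, using the identification $B_i\cong\R^n$, construct a compactly supported homeomorphism $\phi_i\in\Homeo_0(M)$ with $\supp(\phi_i)\subseteq B_i$ which coincides with $x\mapsto x/2$ on a ball containing $\overline{B_i'}$ and equals the identity outside a slightly larger ball in the chart. Iterating, $\phi_i^m(\overline{B_i'})$ is a compact set whose $M$-diameter tends to $0$ as $m\to\infty$. For any $g\in G_i$, the conjugate $\phi_i^m g\phi_i^{-m}$ has support $\phi_i^m(\supp(g))\subseteq\phi_i^m(\overline{B_i'})$, so its maximum displacement is bounded by $\diam\bigl(\phi_i^m(\overline{B_i'})\bigr)$, a bound uniform in $g\in G_i$. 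Given any identity neighborhood $V\subseteq\Homeo_0(M)$, $V$ contains a $d_\infty$-ball of some radius $\eps$; choosing $m=m(V)$ with $\diam\bigl(\phi_i^m(\overline{B_i'})\bigr)<\eps$ gives $\phi_i^m G_i\phi_i^{-m}\subseteq V$. Setting $F_i=\{1,\phi_i^m,\phi_i^{-m}\}$ and using $1\in V$, we obtain $G_i\subseteq\phi_i^{-m}V\phi_i^m\subseteq(F_iV)^2$, so $G_i$ is relatively (OB) in $\Homeo_0(M)$.

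Assembling: for any identity neighborhood $V$, $U_0\subseteq G_1\cdots G_k\subseteq(F_1V)^2\cdots(F_kV)^2\subseteq(FV)^{2k}$ with $F=F_1\cup\cdots\cup F_k$ finite, proving $U_0$ is relatively (OB) and hence local property (OB) holds. The main obstacle I expect is the fragmentation lemma in the precise neighborhood form required: the Edwards--Kirby construction interpolates $f$ to the identity one chart at a time using a partition of unity subordinate to the cover, and the non-trivial content is that for $f$ sufficiently close to $\id$ each interpolated map is still a homeomorphism. A secondary technical issue is handling charts that meet $\partial M$, which requires replacing open balls by half-balls and choosing $\phi_i$ to preserve $\partial M$; this is a routine modification of the scheme above.
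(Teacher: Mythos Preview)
Your argument is correct and follows essentially the same strategy as the paper's proof: apply the Edwards--Kirby fragmentation lemma to obtain an identity neighborhood whose elements factor through finitely many ball-supported subgroups, then show each such subgroup is relatively (OB) by conjugating into an arbitrarily small ball. The only cosmetic difference is that the paper uses a single conjugating homeomorphism $h_i$ (depending on $V$) that shrinks $B_i$ directly to diameter $\epsilon$, whereas you iterate a fixed contraction $\phi_i$; and the paper absorbs the conjugators into a power $U^p$ via connectedness rather than keeping a finite set $F$, but these amount to the same verification of Definition~\ref{OB def}.
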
  

Before we begin the proof, we recall some basic facts on the topology of $\Homeo(M)$.    If $M$ is any compact metrisable space, the 
{\em compact-open topology} on $\Homeo_0(M)$ is given by the subbasic open sets
$$
O_{K, U} = \{h\in \Homeo(M)\mid h(K) \subseteq U\},
$$
where $K\subseteq M$ is compact and $U\subseteq M$ open.
This topology is complete and separable, making $\Homeo(M)$ a Polish group. More generally, if $X$ is a locally compact metrizable space and $X^*$ its Alexandrov compactification, we can identify $\Homeo(X)$ with the pointwise stabilizer of the point at infinity $*$ inside of $\Homeo(X^*)$. Being a closed subgroup, $\Homeo(X)$ is itself a Polish group in the induced topology, which is that given by subbasic open sets
$$
O_{C,U}=\{h\in \Homeo(M)\mid h(C)\subseteq U\},
$$
where $C\subseteq X$ is closed and $U\subseteq X$ open and at least one of $C$ and $X\setminus U$ is compact. As shown by R. Arens \cite{arens}, when $X$ is locally connected, e.g., a manifold, this agrees with the compact-open topology on $\Homeo(X)$. All other transformation groups encountered here will be given the respective induced topologies.

If $d$ is a compatible metric on a compact metrizable space $M$, the compact-open topo\-logy on $\Homeo(M)$ is induced by the right-invariant metric
$$
d_\infty(g,f)=\sup_{x\in M}d(g(x),f(x)).
$$
Observe that, if instead $d$ is a compatible metric on a locally compact metrisable space $X$, the corresponding formula will not, in general, define a metric on $\Homeo(X)$, as distances could be infinite. 

It will often be useful to work with geodesic metrics and their associated lifts. Let us first recall that, by a result of R. H. Bing \cite{Bing},  every compact topological manifold $M$ admits a compatible geodesic metric, so we lose no generality working with these.  Moreover, for every such geodesic metric on $M$ and cover $\tilde{M} \overset{\pi}\to M$, there is a unique compatible geodesic metric $\tilde d$ on  $\tilde M$ so that the covering map $\pi\colon (\tilde{M}, \tilde{d}) \to (M, d)$ is a local isometry.  See e.g. \cite[sect. 3.4]{BII}.
This metric $\tilde d$ will be call the {\em geodesic lift} of $d$.

In particular, if $\tilde{M} \overset{\pi}\to M$ is the universal cover of a compact manifold $M$ and $d$ is a compatible geodesic metric on $M$ with geodesic lift $\tilde d$, then the action $\pi_1(M)\curvearrowright \tilde M$ by deck transformations preserves the metric $\tilde d$. As the action is also properly discontinuous and cocompact, the Milnor--Schwarz Theorem implies that, for every fixed $x\in \tilde M$, 
$$
\gamma\in \pi_1(M)\mapsto \gamma(x)\in \tilde M
$$
defines a quasi-isometry between the finitely generated discrete group $\pi_1(M)$ and the metric space $(\tilde M,\tilde d)$.

The proof of Theorem \ref{Homeo OB} relies on the following two lemmas.  Here, and in later sections, we use the standard notation $\supp(g)$ for the \emph{support} of an element $g \in \Homeo(M)$; this is the closure of the set $\{x \in M \mid g(x) \neq x\}$.

\begin{lemma}  \label{Bn lemma}
The group  $\Homeo_\partial(\B^n)$ of homeomorphisms of the $n$-ball that fix the boundary pointwise has property (OB).
\end{lemma}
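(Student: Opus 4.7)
The plan is to verify property (OB) for $G:=\Homeo_\partial(\B^n)$ via Definition~\ref{OB def}: given any open identity neighbourhood $V\subseteq G$, we seek an integer $N$ with $G\subseteq V^N$ (the finite set $F$ being absorbable since $G$ is connected). As the compact-open topology on $G$ coincides with that of the uniform metric $d_\infty$ associated to a compatible metric on $\B^n$, we may assume $V\supseteq\{g:d_\infty(g,\id)<\epsilon\}$ for some $\epsilon>0$.

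The proof will combine the classical \emph{Alexander trick} with a \emph{Mazur-style swindle}. The Alexander trick produces, for each $f\in G$, the continuous isotopy
\[
f_t(x)=\begin{cases}t f(x/t),& |x|\le t,\\ x,& |x|> t,\end{cases}
\]
from $\id=f_0$ to $f=f_1$; each $f_t$ is supported in $\overline{B(0,t)}$ and satisfies $d_\infty(f_t,\id)\le 2t$. Consequently, any element of $G$ supported in a ball of diameter less than $\epsilon$ automatically lies in $V$, regardless of its restriction to that ball.

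For the swindle, the plan is to fix pairwise disjoint closed balls $B_1,B_2,\ldots\subset\mathrm{int}(\B^n)$ accumulating at a single interior point with $\mathrm{diam}(B_i)<\epsilon$ for $i\ge 2$, together with homeomorphisms $\phi_i:\B^n\to B_i$. The formula
\[
\Phi:G^{\mathbb N}\to G,\qquad \Phi(f_1,f_2,\ldots)\big|_{B_i}=\phi_i f_i\phi_i^{-1},\quad \Phi(\cdot)\big|_{\B^n\setminus\bigcup_i B_i}=\id,
\]
defines a continuous group homomorphism (continuity at the identity sequence of $G^{\mathbb N}$ uses $\mathrm{diam}(B_i)\to 0$). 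Arranging the $B_i$ along a curve converging to the accumulation point, one can construct a shift $\sigma\in G$ with $\sigma(B_i)=B_{i+1}$ for every $i\ge 1$. For $f\in G$, setting $H:=\Phi(f,f,\ldots)$, $T:=\sigma H\sigma^{-1}=\Phi(\id,f,f,\ldots)$, and $\iota_1(f):=\Phi(f,\id,\ldots)$, the factorization $H=\iota_1(f)\cdot T$ rearranges to the swindle identity
\[
\iota_1(f)=\sigma^{-1}T\sigma T^{-1}.
\]
Since $T$ is supported in $\bigcup_{i\ge 2}B_i$ (a union of balls of $d_\infty$-diameter less than $\epsilon$), we have $T\in V$ for every $f$, and $\sigma$ is a fixed element, hence in $V^{k_0}$ for some $k_0=k_0(V)$ by connectedness of $G$. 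Thus $\iota_1(f)\in V^{N_1}$ for some $N_1=N_1(V)$ independent of $f$.

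The hardest step will be to convert this uniform bound on the ``compressed copy'' $\iota_1(f)$ into a uniform bound on $f$ itself. The plan to close this gap is to combine the swindle, applied to various choices of the distinguished ball, with a fragmentation of $f$ provided by the Alexander trick: after adjusting $f$ by a bounded-in-$V$ number of perturbations (using the Alexander isotopy to break the displacement of $f$ into pieces concentrated in interior balls), one writes $f$ as a product of finitely many supported-in-$B$ elements for various interior balls $B$, each of which lies in $V^{N_1}$ by the swindle applied with $B$ in place of $B_1$. Combining these pieces yields $f\in V^N$ for some $N=N(V)$, establishing property (OB).
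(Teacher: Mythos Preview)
Your swindle argument is correct and shows that, once $B_1$ and $\sigma$ are fixed, every element of $G$ supported in $B_1$ lies in $V^{N_1}$ with $N_1=2k_0+2$ depending only on $V$ and $\sigma$. The genuine gap is in the last paragraph. You propose to write an arbitrary $f$ as a product of pieces supported in \emph{various} interior balls $B$ and then invoke the swindle ``with $B$ in place of $B_1$''. But the constant $N_1$ depends on $B$ through the shift $\sigma$ and the exponent $k_0$ with $\sigma\in V^{k_0}$; if $B$ (and hence $\sigma$) is allowed to vary with $f$, you have no uniform bound on $k_0$, and the argument yields nothing. Concretely, the Alexander isotopy gives $f\in f_{t_0}V$ with $\supp(f_{t_0})\subseteq t_0\B^n$, but $t_0=t_0(f)$ ranges over all of $[0,1)$, and no fixed interior ball contains every $t_0\B^n$.

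The fix is a single additional conjugation, which is in fact the heart of the paper's proof. Fix $B_1=(1-\epsilon)\B^n$ once and for all. Given $f$ and the resulting $t_0<1$, there is a radial $h\in V_\epsilon\subseteq V$ (depending on $t_0$, but always $\epsilon$-small) with $h(t_0\B^n)\subseteq(1-\epsilon)\B^n$; then $hf_{t_0}h^{-1}$ is supported in $B_1$, so lies in $V^{N_1}$ by your swindle, and $f\in V^{N_1+3}$. Once you see this step, the swindle becomes unnecessary: the paper instead follows the $h$-conjugation by a \emph{second} conjugation by a single fixed element taking $(1-\epsilon)\B^n$ into $\tfrac{\epsilon}{2}\B^n$, which lands directly in $V$. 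Your infinite-product machinery is a valid but heavier substitute for this one fixed conjugator.
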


\begin{proof}     
We equip $\B^n$ with the usual euclidean metric $d$.   Let $V$ be a neighborhood of the identity in $\Homeo_\partial(\B^n)$.  Then there exists $\eps$ such that  
$$
V_\eps := \{g\in \Homeo_\partial(\B^n)\del \sup_{x\in {\B^n}}d(gx,x)<\eps\} \subseteq V.
$$

Given $g\in \Homeo_\partial(\B^n)$,  the Alexander trick produces an isotopy $g_t$ in $\Homeo_\partial(\B^n)$ from $g_0 = \id$ to $g_1 = g$ via
$$
g_t(x)=\begin{cases}
x &\text{ if } t\leqslant \norm x\leqslant 1,\\
tg(\frac xt)&\text{ if }\norm x\leqslant t.
\end{cases}
$$
Since $\lim_{t\til 1}g_t=g$, there is some $t_0 < 1$ such that $g \in g_{t_0} V$.    Choose  $h \in V_\eps$ such that $h\left(t_0 \B^n \right)  \subseteq (1-\eps)\B^n$.  Since $\supp(g_{t_0}) \subseteq t_0 \B^n$, we have 
$$\supp \left( hg_{t_0}h\inv \right)= h \left( \supp(g_{t_0}) \right) \subseteq (1-\eps)\B^n.$$ 
Now choose $f\in \Homeo_\partial(\B^n)$ such that $f \left( (1-\eps)\B^n \right) \subseteq \frac{\eps}{2}\B^n$.  A similar argument shows that $\supp \left(f h g_{t_0} h\inv f\inv \right)\subseteq \frac \eps2\B^n$.   
As the latter set has diameter $\eps$, it follows that $fhg_{t_0}h\inv f\inv\in V$, hence $g_{t_0} \in V_\eps f\inv V f V_\eps$ and 
$$
g\in g_{t_0}V \subseteq  V  f\inv V fV^2 .
$$
Since $g$ was arbitrary, and $f$ independent of $g$, this shows that $\Homeo_\partial(\B^n)$ has property (OB), taking $F = \{f, f^{-1}\}$ as in Definition \ref{OB def}.  
\end{proof}

The second lemma is classical.  It states that a homeomorphism close to the identity can be factored or ``fragmented" into homeomorphisms supported on small balls.  

\begin{lemma} [\textit{The Fragmentation Lemma} \cite{EK}]  \label{fragmentation} 
Suppose that $\{B_1, \ldots, B_k\}$ is an open cover of a compact manifold $M$.  There exists a neighbourhood $V$ of the identity in $\Homeo_0(M)$ such that  any $g \in V$ can be factored as a composition 
$$
g = g_1 g_2 \cdots g_k
$$
with $\supp(g_i) \subseteq B_i$.
\end{lemma}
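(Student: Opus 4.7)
The plan is to deduce the Fragmentation Lemma by induction on $k$, with the deep input being the Edwards--Kirby deformation theorem, which I would state as follows: for every compact $K \subseteq M$ and every open neighbourhood $U$ of $K$, there is a neighbourhood $W$ of $\id$ in $\Homeo_0(M)$ such that every $h \in W$ factors as $h = g_0 f$ with $\supp(g_0) \subseteq U$ and $f|_K = \id$, moreover with $f$ lying in a prescribed neighbourhood of $\id$ provided $W$ is chosen small enough. This is the genuine geometric content of \cite{EK}, proved via the torus trick and immersion theory, and I would take it as a black box.

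Granting this, the fragmentation lemma follows by induction on $k$. The base case $k=1$ is trivial since then $B_1 = M$ and one sets $g_1 = g$. For the inductive step with cover $\{B_1, \ldots, B_k\}$, put
\[
K := M \setminus (B_2 \cup \cdots \cup B_k),
\]
which is compact and contained in $B_1$. Applying the deformation theorem with this $K$ and $U = B_1$ produces a neighbourhood $V_1$ of $\id$ and, for each $g \in V_1$, a factorisation $g = g_1 f$ with $\supp(g_1) \subseteq B_1$ and $f|_K = \id$. The latter condition forces $\supp(f) \subseteq M \setminus K \subseteq B_2 \cup \cdots \cup B_k$, so the open sets $B_2, \ldots, B_k$ cover (a neighbourhood of) $\supp(f)$, and one may invoke the inductive hypothesis on the cover $\{B_2, \ldots, B_k\}$ of this subset to write $f = g_2 \cdots g_k$ with $\supp(g_i) \subseteq B_i$. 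Taking $V \subseteq V_1$ small enough that the residual factor $f$ falls inside the neighbourhood supplied by the inductive step then yields the desired $V$.

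The main obstacle in this plan is not the induction itself, which is just bookkeeping, but rather the deformation theorem that drives it: one needs to be able to peel off, from any homeomorphism near the identity, a factor supported in a prescribed open set while leaving the remainder equal to the identity on a prescribed compact subset. This is precisely the content of the Edwards--Kirby result and has no soft topological proof; it depends essentially on $M$ being a topological manifold. A minor subtlety worth flagging is continuity of the splitting in $g$, or at least the weaker statement that small $g$ produce small $f$, which is what makes the inductive shrinking of neighbourhoods legitimate; this is built into the way \cite{EK} produces the factorisation, and I would simply quote it in the form above.
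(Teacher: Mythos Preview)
The paper does not give its own proof of this lemma; it simply points to the proof of Corollary~1.3 in \cite{EK} and remarks that the fragmentation statement is ``precisely the first step'' there, relying on the torus trick. Your proposal is consistent with this: you isolate the Edwards--Kirby deformation theorem as the black box and reduce the lemma to it by induction on $k$. That is exactly the intended route, and your identification of the hard content is correct.

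Two small points deserve tightening. First, from $f|_K=\id$ you only get that $\{x:f(x)\neq x\}\subseteq M\setminus K$; the closure $\supp(f)$ may meet the boundary of $K$ and hence fail to lie in $B_2\cup\cdots\cup B_k$. The standard fix is to choose $K$ slightly larger to begin with: take a compact $K^+$ with $M\setminus(B_2\cup\cdots\cup B_k)\subseteq\mathrm{int}(K^+)\subseteq K^+\subseteq B_1$ and apply the deformation theorem with $K^+$. Second, the inductive hypothesis as literally stated is for open covers of the compact manifold $M$, but $\{B_2,\ldots,B_k\}$ is not such a cover. You are tacitly using a stronger statement, namely: for open sets $B_1,\ldots,B_k\subseteq M$ and $g$ close to $\id$ with $\supp(g)$ contained in $\bigcup B_i$, one can fragment. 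Either state and prove this stronger version by the same induction, or unroll the induction into $k$ successive applications of the deformation theorem (using also the clause in \cite{EK} that the isotopy can be taken to fix any closed set already fixed by $h$, so that the accumulated fixed set grows at each step). With either adjustment the argument goes through.
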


The proof of this Lemma is contained in the proof of Corollary 1.3 of  \cite{EK} of  R. D. Edwards and R. C. Kirby, it relies on the topological torus trick.  Note that, although the statement of Corollary 1.3 in \cite{EK} is not equivalent to our statement of the Fragmentation Lemma, our statement is precisely the first step in their proof.    

With these preliminaries, we can now prove Theorem \ref{Homeo OB}.

\begin{proof}[Proof of Theorem \ref{Homeo OB}]
Let $M$ be a closed manifold and $\{B_1, B_2, \ldots, B_k\}$  a cover of $M$ by embedded open balls.   Using Lemma \ref{fragmentation}, let $V$ be a neighbourhood of the identity in $\Homeo_0(M)$ such that any $g \in V$ can be factored as a product 
$$g = g_1 g_2 \cdots g_k$$
with $\supp(g_i) \subseteq B_i$.   We show that $V$ is relatively (OB).  To see this, let $U \subseteq V$ be an arbitrary symmetric identity neighborhood and fix some $\epsilon >0$ such that any homeomorphism $f$ with $\supp(f)$ of diameter $<\epsilon$ is contained in $U$.   For each $i = 1, \ldots, n$, choose a homeomorphism $h_i \in \Homeo_0(M)$ such that $h_i(B_i)$ is contained in a ball $B'_i$ of diameter $\epsilon$.  Then, for $g_i$ as above,  $\supp(h_i\inv g_i h_i) \subseteq B'_i$, so $h_i\inv g_i h_i \in U$ or equivalently $g_i \in h_i U h_i\inv$.   

Choose $p$ large enough so that $U^p$ contains all of the finitely many $h_i$.
Then 
$$g \in  h_1 U h_1\inv h_2 U ...   ... h_n U h_k\inv \subseteq U^{2kp+k},$$ showing that $V \subseteq U^{2kp+k}$ and thus that $V$ is relatively (OB). As $\Homeo_0(M)$ is connected, $V$ generates, so this proves (OB) generation.   
\end{proof}

\subsection{The fragmentation norm.}
Using Lemma \ref{fragmentation}, one can define the following word norm on $\Homeo_0(M)$.
\begin{defi}[Fragmentation norm]
Let $\mathcal{B} = \{B_1, B_2, \ldots, B_k \}$ be a cover of $M$ by embedded open balls.  The \emph{fragmentation norm} $\| \cdot \|_\mathcal{B}$ on $\Homeo_0(M)$ is given by 
$$\| g \|_\mathcal{B} = \min \{ n \mid g= g_1 g_2 \cdots g_n \text{ with }  \supp(g_i) \subseteq B_{k_i} \text{ for some } k_i\}$$
\end{defi}

Note that this is dependent on the choice of cover, and \emph{not} equivalent to the ``conjugation-invariant fragmentation norm" that appears elsewhere in the literature, e.g. in  \cite{BIP}.  

Let $d_\mathcal{B}$ denote the right-invariant metric induced by $\| \cdot \|_\mathcal{B}$.  The following observation is due to E. Militon \cite{Militon}.  

\begin{prop} 
Let $\mathcal{A}$ and $\mathcal{B}$ be finite covers of $M$ by embedded open balls.  Then $d_\mathcal{A}$ and  $d_\mathcal{B}$ are quasi-isometric.
\end{prop}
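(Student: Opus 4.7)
The plan is to show that the identity map $(\Homeo_0(M), d_\mathcal{A})\til (\Homeo_0(M), d_\mathcal{B})$ is bi-Lipschitz, which is stronger than quasi-isometric. By symmetry and subadditivity of the fragmentation norm on products, it suffices to produce a constant $C$ such that $\|g\|_\mathcal{B}\leqslant C$ whenever $\supp(g)\subseteq A$ for some $A\in\mathcal{A}$: applying this bound to each factor of an optimal $d_\mathcal A$-decomposition of an arbitrary $g$ then yields $\|g\|_\mathcal{B}\leqslant C\,\|g\|_\mathcal{A}$, and the reverse inequality follows by swapping the roles of $\mathcal{A}$ and $\mathcal{B}$.

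To produce the uniform bound, fix $A\in\mathcal{A}$ and consider the closed subgroup
$$
G_A \;=\; \{g\in \Homeo_0(M)\mid g|_{M\setminus A}=\id\}.
$$
Since $A$ is an embedded open ball, restriction to $\overline{A}$ gives a topological-group isomorphism $G_A\cong \Homeo_\partial(\overline{A})\cong \Homeo_\partial(\B^n)$; in particular, $G_A$ is connected (by the Alexander trick) and has property (OB) by Lemma \ref{Bn lemma}. Next, apply the Fragmentation Lemma (Lemma \ref{fragmentation}) to the cover $\mathcal{B}$ to obtain an identity neighborhood $V\subseteq \Homeo_0(M)$ such that every $h\in V$ is a product of at most $|\mathcal{B}|$ factors, each supported in some ball of $\mathcal{B}$; in particular $\|h\|_\mathcal{B}\leqslant |\mathcal{B}|$ for all $h\in V$. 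Because $V\cap G_A$ is an identity neighborhood in the connected Polish group $G_A$ with property (OB), there exists an integer $n_A$ with $G_A=(V\cap G_A)^{n_A}$ (connectedness lets the finite set in Definition \ref{OB def} be absorbed into a power of the neighborhood). Hence every $g\in G_A$ satisfies $\|g\|_\mathcal{B}\leqslant n_A\cdot |\mathcal{B}|$, and setting $C:=|\mathcal{B}|\cdot\max_{A\in\mathcal{A}} n_A$ --- finite because $\mathcal{A}$ is finite --- yields the required bound on generators.

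The main technical point is translating property (OB) of the abstract topological group $G_A$ into an actual bound in the ambient fragmentation norm $\|\cdot\|_\mathcal{B}$; the trick is to pair (OB) with the Fragmentation Lemma, using the latter to convert the ambient topological fact ``small $\Homeo_0(M)$-neighborhoods have bounded $d_\mathcal{B}$-length'' into a uniform $d_\mathcal{B}$-diameter bound on $G_A$. Working with the closed subgroup $G_A$ rather than the (possibly non-closed) set of homeomorphisms supported strictly inside $A$ is what gives the clean identification with $\Homeo_\partial(\B^n)$, and thereby direct access to Lemma \ref{Bn lemma}.
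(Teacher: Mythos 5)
Your overall strategy is exactly the paper's mechanism (it is the second half of the proof of Theorem \ref{fragmentation norm thm}, applied symmetrically to the two covers instead of to a maximal metric): fragment via Lemma \ref{fragmentation}, then uniformly bound the $\mathcal B$-length of homeomorphisms supported in a single ball of $\mathcal A$ by an (OB)-type argument coming from Lemma \ref{Bn lemma}. The gap is the identification $G_A\cong \Homeo_\partial(\overline{A})\cong \Homeo_\partial(\B^n)$. An ``embedded open ball'' is only a homeomorphic image of the open ball; its closure in $M$ need not be a closed ball at all. In the paper's own proof of Proposition \ref{Sn OB prop} the cover consists of the sets $B_i=\S^n\setminus\{x_i\}$, whose closure is all of $\S^n$; for such an $A$ your group $G_A=\{g\in\Homeo_0(M)\mid g|_{M\setminus A}=\id\}$ is the stabilizer of the point $x_i$, which is nothing like $\Homeo_\partial(\B^n)$, and in dimension $\geqslant 3$ one can also have balls with wild frontier (e.g.\ bounded by a horned sphere). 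So Lemma \ref{Bn lemma} does not apply to $G_A$, the Alexander-trick connectedness claim for $G_A$ is likewise unjustified, and the key step $G_A=(V\cap G_A)^{n_A}$ has no proof as written.

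The repair is to work, as the paper does, with $G'_A=\{g\in\Homeo_0(M)\mid \supp(g)\subseteq A\}$, which is precisely the set of elements counted by both fragmentation norms; it is a subgroup since $\supp(gh)\subseteq\supp(g)\cup\supp(h)$, and identifying $A$ with the interior of $\B^n$ identifies $G'_A$ with the group of compactly supported homeomorphisms of the open ball, i.e.\ a subgroup of $\Homeo_\partial(\B^n)$ --- this uses only $A$ itself and is immune to wildness of $\overline A$. Since property (OB) does not pass to subgroups, one should then observe that the proof of Lemma \ref{Bn lemma} (the Alexander isotopy, with the auxiliary homeomorphisms $h$ and $f$ chosen compactly supported in the interior) runs entirely inside this subgroup, so that for every identity neighbourhood $V$ of $\Homeo_0(M)$ there is $n_A$ with $G'_A\subseteq (V\cap G'_A)^{n_A}$; this is exactly the step the paper takes with its groups $G_i$ in Theorem \ref{fragmentation norm thm}. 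Note that closedness of the subgroup is never needed anywhere in the argument, so your stated reason for preferring $G_A$ buys nothing. With $G'_A$ in place of $G_A$, your symmetric two-cover argument goes through and gives $\|g\|_\mathcal{B}\leqslant C\,\|g\|_\mathcal{A}$ and its mirror image, hence the quasi-isometry.
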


However, an even stronger statement is valid.  For any cover $\mathcal{B}$, the metric $d_\mathcal{B}$ is \emph{maximal}, in the sense given in the introduction -- i.e. it is quasi-isometric to that given by any relatively (OB) identity neighborhood $V$ in $\Homeo_0(M)$.  

\begin{thm}  \label{fragmentation norm thm}
Let $\mathcal{B} = \{B_1, \ldots,  B_k\}$ be a cover of $M$ by embedded open balls.  Then the identity map 
$$
(\Homeo_0(M), d_\mathcal{B}) \to \Homeo_0(M)
$$
is a quasi-isometry.  
\end{thm}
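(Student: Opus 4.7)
The plan is to verify that $d_\mathcal{B}$ is quasi-isometric to the word metric $\rho_V$ associated to any relatively (OB) identity neighborhood $V$ of $\Homeo_0(M)$; by Theorem \ref{Homeo OB} and the framework of Section \ref{prelim sec}, any such $\rho_V$ is a maximal metric, so this comparison suffices. Concretely, we need to establish both $d_\mathcal{B}\leqslant K_1\rho_V+C_1$ and $\rho_V\leqslant K_2 d_\mathcal{B}+C_2$.

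The upper bound $d_\mathcal{B}\leqslant K_1\rho_V+C_1$ is essentially immediate from the Fragmentation Lemma (Lemma \ref{fragmentation}). That lemma provides an identity neighborhood $W\subseteq \Homeo_0(M)$ such that every $g\in W$ satisfies $\|g\|_\mathcal{B}\leqslant k:=|\mathcal{B}|$, hence, by sub-additivity of $\|\cdot\|_\mathcal{B}$ under composition, $\|g\|_\mathcal{B}\leqslant k\,\rho_W(g,\id)$ for all $g$. Since $W$ is contained in the relatively (OB) neighborhood from the proof of Theorem \ref{Homeo OB}, $W$ itself is relatively (OB), so $\rho_W$ is quasi-isometric to $\rho_V$ by the general theory, giving the desired inequality.

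The main content lies in the reverse inequality $\rho_V\leqslant K_2 d_\mathcal{B}+C_2$. For this, it suffices to show that the natural generating set for $d_\mathcal{B}$, namely
$$S:=\bigcup_i H_i,\qquad H_i:=\{g\in \Homeo_0(M)\mid \supp(g)\subseteq B_i\},$$
is relatively (OB) in $\Homeo_0(M)$: indeed, then $S\subseteq V^N$ for some uniform $N$, and any $g$ with $\|g\|_\mathcal{B}=n$ lies in $V^{nN}$, giving $\rho_V(g,\id)\leqslant N\cdot d_\mathcal{B}(g,\id)$. Since there are only finitely many balls $B_i$, I just need to show that each $H_i$ is relatively (OB) in $\Homeo_0(M)$. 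This is effectively the content of Lemma \ref{Bn lemma} transplanted into $\Homeo_0(M)$: identifying $B_i$ with the open unit ball $\R^n\cong\mathring{\B^n}$, for any $g\in H_i$ one uses the Alexander trick to isotope $g$ to an element $g_{t_0}$ supported in a compact subset of $B_i$ with $g\in g_{t_0}V$, then conjugates $g_{t_0}$ (first by some $h_i\in V$, then by a fixed $f_i\in\Homeo_0(M)$) to push the support into a set of arbitrarily small diameter, landing in $V$. Since these conjugators are supported in $B_i$ (hence extend by the identity to elements of $\Homeo_0(M)$) and $\Homeo_0(M)$ is connected, $f_i\in V^{p_i}$ for some $p_i$, yielding $H_i\subseteq V^{2p_i+4}$; maximizing over $i$ finishes the argument.

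The step I expect to require the most care is verifying that the Alexander-trick argument of Lemma \ref{Bn lemma} can be run inside the ambient group $\Homeo_0(M)$. Specifically, one must confirm that (i) the subspace topology on $H_i$ induced from $\Homeo_0(M)$ coincides with the natural compact-open topology on $\Homeo_\partial(\B^n)$, so that $V\cap H_i$ is a genuine identity neighborhood of $H_i$ in which the auxiliary element $h_i$ can be chosen, and (ii) the conjugating element $f_i$ — which is not close to the identity — can be chosen globally in $\Homeo_0(M)$ independently of $g$. Neither presents a serious obstacle: $H_i$ is closed in $\Homeo_0(M)$ and connected (again by the Alexander trick), and both $f_i$ and $h_i$ can be chosen supported in $B_i$, but this is the only place where ambient-versus-intrinsic issues arise.
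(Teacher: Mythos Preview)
Your proposal is correct and follows essentially the same two-sided estimate as the paper: fragmentation gives $d_{\mathcal B}\leqslant k\cdot \rho_V$ for a small enough relatively (OB) neighbourhood $V$, while the reverse inequality comes from showing each $H_i=\{g\mid\supp(g)\subseteq B_i\}$ lies in a fixed power of $V$. The only packaging difference is that the paper invokes Lemma~\ref{Bn lemma} directly---observing that $V\cap H_i$ is an identity neighbourhood in $H_i$ (your point~(i)), so property (OB) of $\Homeo_\partial(\B^n)$ yields $H_i=(V\cap H_i)^{n_i}\subseteq V^{n_i}$---rather than re-running the Alexander-trick argument inside $\Homeo_0(M)$ as you outline; these are the same argument viewed intrinsically versus ambiently.
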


In particular, this proves Theorem \ref{loc OB} as stated in the introduction.

\begin{proof}
By Theorem \ref{Homeo OB}, we must show that the word metric $d_\mathcal B$ is quasi-isometric to some or all word metrics $d_V$ given by a relatively (OB) identity neighborhood $V$ in $\Homeo_0(M)$. 

So choose $V$ small enough so that each $g \in V$ can be fragmented as $g = g_1 \cdots g_k$ with $\supp(g_i) \subseteq B_i$.  Thus, if $f \in V^m$, then $\| f \|_\mathcal{B} \leqslant k m$ and so $d_\mathcal B\leqslant k\cdot d_V$.  For the reverse inequality, set $G_i =  \{g \in \Homeo_0(M) \mid \supp(g) \subseteq B_i\}$ and 
note that $V \cap G_i$ is a neighborhood of the identity in $G_i$, which is naturally isomorphic to a subgroup of $\Homeo_\partial(\B^n)$.  Lemma \ref{Bn lemma} now gives $n_i$ such that $G_i=(V\cap G_i)^{n_i} \subseteq V^{n_i}$.  Let $N = \max \{n_1, \ldots, n_k \}$.  Then $\|f \|_\mathcal{B} = m$ implies that $f \in V^{mN}$, i.e., $d_V\leqslant N\cdot d_{\mathcal B}$
\end{proof}
It follows from Theorem \ref{fragmentation norm thm} that the quasi-isometry type of $\Homeo_0(M)$ may be computed either from the word metric $d_V$ associated to a sufficiently small identity neighbourhood $V$ or from $d_\mathcal B$, where $\mathcal B$ is any covering by embedded open balls.

\subsection{Distortion in homeomorphism groups}

Having defined a maximal metric we can now define distortion for homeomorphism groups.  

\begin{defi}  \label{distortion def}
Let $H$ be an (OB) generated Polish group and $G$ an (OB) generated closed subgroup of $H$. We say that $G$ is \emph{undistorted} if the inclusion $G \to H$ is a quasi-isometric embedding and \emph{distorted} otherwise.  
\end{defi}

A related notion of distortion for cyclic subgroups of homeomorphisms appears in \cite{CF}, \cite{FH}, \cite{Militon2}, \cite{Polterovich} (see also references therein).    Without reference to a maximal metric on these groups, these authors define a homeomorphism (or diffeomorphism) $f$ to be \emph{distorted} if there exists a finitely generated subgroup $\Gamma$ of $\Homeo_0(M)$ (respectively, $\Diff_0(M)$)  containing $f$ in which the cyclic subgroup generated by $f$ is distorted in the sense above, i.e. if 
$$\lim \limits_{n \to \infty} \frac{ \| f^n \|_S}{n} = 0$$
where $\| f^n \|_S$ denotes the word length of $f^n$ with respect to some finite generating set $S$ for $\Gamma$.  
Though this definition appears somewhat artificial, it is useful: distortion has been shown to place strong constraints on the dynamics of a homeomorphism (see in particular \cite{FH}), and is used in \cite{Hurtado} to relate the algebraic and topological structure of $\Diff_0(M)$.

The following lemma, essentially due to Militon, relates our Definition \ref{distortion def} to the original notion of distorted elements.  
\begin{lemma}[Militon, \cite{Militon}]    
Let $M$ be a compact manifold, possibly with boundary, and let $\partial$ be a maximal metric on $\Homeo_0(M)$.   
If
$$\lim \limits_{n \to \infty} \frac{\partial(g^n, \id) \log(\partial(g^n, \id))}{n} = 0$$
then there exists a finitely generated subgroup $\Gamma \leqslant \Homeo_0(M)$ in which $g$ is distorted.  
\end{lemma}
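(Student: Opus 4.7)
\emph{Proof plan.} By Theorem \ref{fragmentation norm thm}, the maximal metric $\partial$ is quasi-isometric to the fragmentation metric $d_{\mathcal B}$ associated to a fixed finite cover $\mathcal B = \{B_1,\ldots,B_r\}$ of $M$ by embedded open balls. Writing $L_n := d_{\mathcal B}(g^n,\id)$, the hypothesis reads $L_n\log L_n = o(n)$. It therefore suffices to produce a finitely generated subgroup $\Gamma \leqslant \Homeo_0(M)$ containing $g$ together with a finite generating set $S$ of $\Gamma$ satisfying $\|g^n\|_S \leqslant C\, L_n\log L_n$ for all large $n$; the hypothesis then forces $\|g^n\|_S/n \to 0$, which is exactly distortion of $g$ in $\Gamma$.

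For each $n$, fix a minimal fragmentation $g^n = h_{n,1}\cdots h_{n,L_n}$ with $\supp(h_{n,j})\subseteq B_{i(n,j)}$, where $i(n,j)\in\{1,\ldots,r\}$. The group $\Gamma$ is assembled, following Militon \cite{Militon}, from several ingredients in $\Homeo_0(M)$: a sequence of pairwise disjoint balls $D_0, D_1, D_2,\ldots\subseteq M$ together with a shift $\tau$ satisfying $\tau(D_m)=D_{m+1}$; for each $B_i\in\mathcal B$ an auxiliary element $\sigma_i$ carrying $D_0$ into $B_i$; a single ``reservoir'' homeomorphism $\eta$ whose restrictions to the $D_m$ encode all of the fragmentation pieces $h_{n,j}$ under a bijective labelling $(n,j)\mapsto m(n,j)\in\N$ chosen so that $m(n,j) \leqslant C L_n$; and finally an auxiliary element $\alpha$ implementing a binary-addressing (squaring-style) trick, so that $\tau^m$ is expressible as a word of length $O(\log m)$ in $\{\tau,\alpha\}$. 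We take $S$ to be the union of these elements and their inverses, $\Gamma := \langle S\rangle$.

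Granted the construction, each fragmentation piece $h_{n,j}$ can be extracted from $\eta$ by a word in $S$ of total length $O(\log L_n)$: the $\sigma_i$ and $\eta$ contribute a bounded number of letters, while reaching the correct block $D_{m(n,j)}$ via $\tau$ and $\alpha$ costs $O(\log m(n,j)) = O(\log L_n)$ letters. Concatenating over the $L_n$ pieces yields
\[
\|g^n\|_S \;\leqslant\; \sum_{j=1}^{L_n} \|h_{n,j}\|_S \;\leqslant\; C\, L_n\log L_n,
\]
and the hypothesis $L_n\log L_n = o(n)$ delivers $\|g^n\|_S / n \to 0$, showing that $g$ is distorted in $\Gamma$.

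The principal obstacle is the simultaneous geometric realization of $\tau$, $\eta$, and $\alpha$ inside the single compact manifold $M$. Packing infinitely many arbitrary fragmentation pieces $h_{n,j}$ into a single reservoir $\eta$ is feasible because the $D_m$ can be chosen pairwise disjoint and there is no regularity constraint, and the $\sigma_i$ are produced by standard isotopy. The delicate point is the binary-addressing element $\alpha$: its defining squaring relation cannot hold globally on $\supp(\tau)$, and so must be realized on a nested sequence of shrinking sub-blocks of the $D_m$, with careful control on the geometry to preserve compactness of supports. This is the content of Militon's trick in \cite{Militon}, and it is precisely what makes the $\log$ factor in the hypothesis essential rather than the strictly stronger assumption $L_n = o(n)$.
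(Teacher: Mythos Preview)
Your overall architecture (fragmentation metric, shift, reservoir, binary addressing) is in the right spirit, but there is a genuine gap at the heart of the argument: the bijective labelling $(n,j)\mapsto m(n,j)$ with $m(n,j)\leqslant C\,L_n$ \emph{cannot exist}. Indeed, for any fixed $L$ there may be arbitrarily many indices $n$ with $L_n\leqslant L$ (nothing in the hypothesis prevents $L_n$ from being bounded, or from repeating values infinitely often), and each such $n$ contributes $L_n$ pairs; yet your constraint would force all of these infinitely many pairs into the finite range $\{0,\ldots,\lfloor CL\rfloor\}$. Consequently the uniform estimate $\|g^n\|_S\leqslant C\,L_n\log L_n$ for \emph{all} $n$ is not attainable by this reservoir scheme, and the proof as written does not go through.

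The paper's proof (following Militon, who in turn adapts a construction of Avila) repairs exactly this point. One does \emph{not} encode every power of $g$; instead one first chooses a rapidly increasing subsequence $\sigma(n)$, stores only the fragmentation pieces of $g^{\sigma(n)}$ in the reservoir, and obtains a bound of the shape
\[
\|g^{\sigma(n)}\|_S \;\leqslant\; C\, l_{\sigma(n)}\,\log\Big(\sum_{i\leqslant n} l_{\sigma(i)}+C'\Big),
\]
where the logarithm now sees the \emph{cumulative} count of stored pieces. The hypothesis $l_m\log l_m=o(m)$ is then used to choose $\sigma(n)$ growing fast enough that this quantity divided by $\sigma(n)$ tends to $0$. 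Finally---and this is a step your sketch omits entirely---one invokes subadditivity of $n\mapsto \|g^n\|_S$ (which requires $g\in\langle S\rangle$, so $g$ must be thrown into $S$) to pass from the subsequence estimate to $\|g^n\|_S/n\to 0$ for all $n$. Both the passage to a subsequence and the subadditivity argument are essential; your proposal collapses them into a single uniform bound that is simply too strong.
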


\begin{proof} 
By Theorem \ref{fragmentation norm thm}, it suffices to take $\partial = d_{\mathcal B}$ to be the maximal metric given by the fragmentation norm with respect to some cover $\mathcal B$ of $M$.  Now fix $g \in \Homeo_0(M)$ and let $l_n = d_{\mathcal B}(g^n)$.  

In \cite[Prop 4.4]{Militon}, Militon uses the fragmentation property and a construction based on work of A. Avila to build a subsequence $\sigma(n)$ and a finite set $S \subseteq \Homeo_0(M)$ such that 
$$
\|g^{\sigma(n)}\|_S \leqslant C\,  l_{\sigma(n)}  \log \left( \sum_{i+1}^n l_{\sigma(i)} + C' \right)
$$ 
Moreover, using the fact that $\frac{l_n \log (l_n)}{n} \to 0$, the subsequence $\sigma(n)$ is chosen to increase rapidly enough so that 
$$ \frac{l_{\sigma(n)}  \log \left( \sum_{i+1}^n l_{\sigma(i)} + C' \right)}{\sigma(n)} \leqslant \frac{1}{n}$$
As the sequence $\|g^n\|_S$ is subadditive, this shows that $\lim \limits_{n \to \infty} \frac{\|g^n\|_S}{n} = 0$.  
\end{proof}


\section{Computation of quasi-isometry types}  \label{examples sec}
Whereas the results above allow us to talk unambiguously of the quasi-isometry type of a homeomorphism group, they provide little information as to what this might be or how to visualize it.    In this section, we give some examples.  The easiest case is that of spheres, where the homeomorphism group is simply quasi-isometric to a point. 
\begin{prop} \label{Sn OB prop}
$\Homeo_0(\S^n)$ has property (OB), hence a trivial quasi-isometry type.
\end{prop}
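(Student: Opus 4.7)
The plan is to invoke Theorem~\ref{fragmentation norm thm}: it suffices to show that the fragmentation metric $d_\mathcal{B}$ on $\Homeo_0(\S^n)$ has \emph{finite diameter} for some finite cover $\mathcal{B}$ of $\S^n$ by open balls, since then the maximal metric, being quasi-isometric to $d_\mathcal{B}$, is also bounded, which is exactly property (OB). I would choose the minimal cover $\mathcal{B}=\{B_1,B_2\}$ with $B_1=\S^n\setminus\{q\}$ and $B_2=\S^n\setminus\{p\}$ for fixed distinct points $p,q\in\S^n$, and show that every $g\in\Homeo_0(\S^n)$ satisfies $\|g\|_\mathcal{B}\leqslant 3$.

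The core step is the following factorization. Assume first that $g(p)\neq q$. I would then pick a small open neighborhood $U$ of $p$ and an open ball $V$ with $\overline{U}\cup g(\overline{U})\subset V$ and $\overline{V}\subset B_1$; the condition $g(p)\neq q$ makes this possible. The goal is to construct a homeomorphism $h_1\in\Homeo_0(\S^n)$ with $\supp(h_1)\subset V\subset B_1$ and $h_1|_U=g|_U$. Given such an $h_1$, the element $h_2:=h_1^{-1}g$ is the identity on the neighborhood $U$ of $p$, so $\supp(h_2)\subset\S^n\setminus U\subset B_2$, and $g=h_1h_2$ is the desired two-factor decomposition. In the remaining case $g(p)=q$, I would first choose any $f\in\Homeo_0(\S^n)$ with $\supp(f)\subset B_2$ and $f(q)\neq q$ (such $f$ exists since the compactly supported homeomorphism group of a ball acts transitively on its interior); then $(fg)(p)=f(q)\neq q$, the previous argument factors $fg=h_1h_2$, and $g=f^{-1}h_1h_2$ gives $\|g\|_\mathcal{B}\leqslant 3$.

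The principal obstacle is the construction of $h_1$. For this I would fix an isotopy $(g_t)_{t\in[0,1]}$ from $\id$ to $g$ in $\Homeo_0(\S^n)$ and, by uniform continuity of $(x,t)\mapsto g_t(x)$, shrink $U$ so that $g_t(\overline{U})\subset V$ for every $t\in[0,1]$. The family $t\mapsto g_t|_U\colon U\inj V$ is then an isotopy of embeddings of $U$ into the open ball $V$, which the topological isotopy extension theorem---the same machinery underlying Lemma~\ref{fragmentation} \cite{EK}---extends to an ambient isotopy $\tilde{g}_t$ of $\S^n$ supported in $V$ with $\tilde{g}_t|_U=g_t|_U$. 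Setting $h_1:=\tilde{g}_1$ completes the construction. This topological isotopy extension step is the only nonelementary ingredient; once it is in hand, the factorization yields the uniform bound $\|g\|_\mathcal{B}\leqslant 3$, and Theorem~\ref{fragmentation norm thm} delivers property (OB).
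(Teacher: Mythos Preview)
Your approach is genuinely different from the paper's, and more direct: you use a two-ball cover $\{B_1,B_2\}$ and aim for a uniform bound of $3$ on the fragmentation norm, whereas the paper uses three punctured-sphere balls and a two-stage factorization---first writing $f$ as a product of nine homeomorphisms each fixing two of the three marked points, and then applying Kirby's torus trick to split each of those into two factors, for a total of eighteen. Both arguments ultimately rest on the Edwards--Kirby machinery of \cite{EK}, but yours invokes it once (as isotopy extension) rather than after an elementary reduction to homeomorphisms already fixing two points. Your route buys a sharper bound; the paper's route has the mild advantage that the torus trick is applied only to a homeomorphism already $C^0$-close to the identity near a fixed point, rather than along an entire isotopy.

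There is, however, a gap in your construction of $h_1$. You choose $V$ with $\overline{U}\cup g(\overline{U})\subset V$ and $\overline{V}\subset B_1$, then pick an isotopy $(g_t)$ and propose to shrink $U$ so that $g_t(\overline{U})\subset V$ for all $t$. But shrinking $U$ cannot arrange this if the path $t\mapsto g_t(p)$ itself leaves $V$; in particular nothing prevents $g_t(p)=q$ for some $t$, in which case $g_t(\overline U)\not\subset B_1$ no matter how small $U$ is. The fix is to choose the isotopy first, and with care: pick a path $\gamma$ from $p$ to $g(p)$ inside $\S^n\setminus\{q\}$ (which is path-connected), realize it as $t\mapsto h_t(p)$ for an isotopy $(h_t)$ starting at $\id$, and then concatenate with a path inside the stabilizer of $g(p)$ in $\Homeo_0(\S^n)$---this stabilizer is path-connected (for $n\geqslant 2$ by the long exact sequence of the evaluation fibration, and for $n=1$ because it is $\Homeo_+([0,1])$)---to reach $g$. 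With $(g_t)$ so chosen, the compact track $\{g_t(p):t\in[0,1]\}$ avoids $q$, hence lies in some ball $V$ with $\overline{V}\subset B_1$, and \emph{then} uniform continuity lets you shrink $U$ as intended. With this adjustment your argument goes through and gives $\|g\|_\mathcal{B}\leqslant 3$.
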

This was previously proved in \cite{OB}. Subsequently, in \cite{CF}, $\Homeo_0(\S^n)$ was shown to have property (OB) even viewed as a discrete group.  Both proofs use the annulus conjecture.  The proof we give here is similar in spirit, but only uses Kirby's torus trick.  

\begin{proof}[Proof of Proposition \ref{Sn OB prop}]
Let $x_1, x_2, x_3$ be distinct points in $\S^n$, and let $B_i = \S^n \setminus \{x_i\}$.  By Theorem \ref{fragmentation norm thm}, it suffices to show that the fragmentation norm on $\Homeo_0(\S^n)$ with respect to the cover $\{B_1, B_2, B_3\}$ is bounded. We do this in three short lemmas.
  
\begin{lemma} \label{lemma1}
Let $f \in \Homeo_0(\S^n)$, and $x\neq  y \in \S^n$.  Then $f$ can be written as a product $f_1 f_2 f_3$, where $f_1(x) = f_3(x) = x$, and $f_2(y) = y$.  
\end{lemma}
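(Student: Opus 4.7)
The approach is to exploit the (well-known) $2$-transitivity of $\Homeo_0(\S^n)$ on $\S^n$; equivalently --- and this is all I really need --- the stabilizer of $x$ inside $\Homeo_0(\S^n)$ acts transitively on $\S^n \setminus \{x\}$. One can see this by observing that any compactly supported homeomorphism of $\R^n \cong \S^n \setminus \{x\}$ extends to an element of $\Homeo_0(\S^n)$ fixing $x$, and compactly supported homeomorphisms of $\R^n$ act transitively on $\R^n$.

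Given this fact, the decomposition almost writes itself, provided I can first arrange an $f_3 \in \Homeo_0(\S^n)$ with $f_3(x) = x$ such that $h := f f_3^{-1}$ satisfies $h(y) \neq x$. Indeed, granted such an $f_3$, transitivity of the stabilizer of $x$ produces $f_1 \in \Homeo_0(\S^n)$ with $f_1(x) = x$ and $f_1(y) = h(y)$, and then $f_2 := f_1^{-1} h$ automatically fixes $y$. The resulting factorization
$$ f = f_1 \cdot (f_1^{-1} h) \cdot f_3 = f_1 f_2 f_3 $$
has the required fixed-point properties.

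It remains to pick the right $f_3$. If $f(y) \neq x$, take $f_3 = \id$. Otherwise $y = f^{-1}(x)$, so the requirement $(f f_3^{-1})(y) \neq x$ reduces to $f_3(y) \neq y$, and any homeomorphism supported in a small ball around $y$ disjoint from $x$ (which lies in $\Homeo_0(\S^n)$, fixes $x$, and moves $y$) does the job. The only substantive input is homogeneity of $\S^n$; the single mild obstacle is the short case split needed to cover the degenerate possibility $f(y) = x$.
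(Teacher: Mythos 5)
Your proof is correct and follows essentially the same approach as the paper's: both arguments use transitivity of the stabilizer of a point (equivalently, $2$-transitivity of $\Homeo_0(\S^n)$) to construct two of the factors explicitly and take the third as the leftover, with a short case split to avoid a degenerate configuration. The only cosmetic difference is that the paper splits on $f(x)=x$ and solves for $f_3$ last, while you split on $f(y)=x$ and solve for $f_2$ last.
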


\begin{proof} If $f(x) = x$, set $f_2 = f_3 = \id$, and $f_1 = f$.  Otherwise, let $f_1$ be such that $f_1(x) = x$ and $f_1^{-1}(f(x)) = z$ for some $z \notin \{x, y\}$.  Let $f_2$ be such that $f_2(y) = y$ and $f_2^{-1}(z) =x$.  Then $f_3:= f_2^{-1}f_1^{-1}f$ satisfies $f_3(x) = x$, and $f = f_1 f_2 f_3$.  
\end{proof}

A nearly identical argument shows the following. We omit the proof.
\begin{lemma} \label{lemma2}
Let $f \in \Homeo_0(\S^n)$, $p\in \S^n$  and suppose $f(p) = p$. Let $x\neq y \in \S^n$ be distinct from $p$.  Then $f$ can be written as a product $f_1 f_2 f_3$, where $f_1(x) = f_3(x) = x$, $f_2(y) = y$, and each $f_i$ fixes $p$.  
\end{lemma}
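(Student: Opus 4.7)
The plan is to adapt the proof of Lemma \ref{lemma1} verbatim, merely enforcing the extra constraint that each homeomorphism produced in the construction also fixes the point $p$. Since the hypothesis gives $f(p)=p$, we may work throughout inside the stabilizer subgroup
$$
\Homeo_0(\S^n)_p = \{g\in \Homeo_0(\S^n)\mid g(p)=p\}.
$$

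First I would dispose of the trivial case: if $f(x)=x$, set $f_1=f$ and $f_2=f_3=\id$, and everything required is immediate. Otherwise, I mimic the earlier construction by (i) choosing $f_1\in \Homeo_0(\S^n)_p$ with $f_1(x)=x$ and $f_1^{-1}(f(x))=z$ for some $z\in \S^n\setminus\{x,y,p\}$, then (ii) choosing $f_2\in \Homeo_0(\S^n)_p$ with $f_2(y)=y$ and $f_2^{-1}(z)=x$, and finally (iii) setting $f_3 := f_2^{-1}f_1^{-1}f$. By construction $f_3(x)=f_2^{-1}f_1^{-1}f(x)=f_2^{-1}(z)=x$; that $f_3$ fixes $p$ follows at once because each of the three factors does; and $f=f_1f_2f_3$ is tautological.

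The only point requiring care is the existence of the auxiliary $f_1$ and $f_2$ inside the stabilizer of $p$. This reduces to showing that the pointwise stabilizer of $\{x,p\}$ (resp.\ $\{y,p\}$) in $\Homeo_0(\S^n)$ acts transitively on $\S^n\setminus\{x,y,p\}$ (resp.\ on the set of points that need to be matched). Identifying $\S^n\setminus\{p\}$ with $\R^n$, the requisite homeomorphisms can be produced as compactly supported homeomorphisms of $\R^n$ fixing a prescribed point, which then extend continuously to elements of $\Homeo_0(\S^n)_p$. For $n\geqslant 2$ this transitivity is immediate from the connectedness of the relevant complements; for $n=1$ one chooses the intermediate point $z$ inside the appropriate arc of $\S^1\setminus\{p\}$ determined by the cyclic order, which is the only minor subtlety. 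Once this transitivity is granted, the proof is a line-by-line copy of the proof of Lemma \ref{lemma1}, which is why the authors omit it.
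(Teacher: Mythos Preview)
Your proposal is correct and is exactly what the paper intends: the authors write ``A nearly identical argument shows the following. We omit the proof,'' and your write-up is precisely that nearly identical argument, working inside the stabilizer $\Homeo_0(\S^n)_p$ and repeating the construction of Lemma~\ref{lemma1}. The extra remark about transitivity (and the cyclic-order caveat for $n=1$) is a welcome justification of a point the paper leaves implicit.
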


Given distinct points $x_1, x_2, x_3 \in \Homeo_0(\S^n)$, Lemmas \ref{lemma1} and \ref{lemma2} together imply that any homeomorphism $f \in \Homeo_0(\S^n)$ can be written as a product of 9 homeomorphisms, each of which fixes two of the $x_i$.  We now use Kirby's torus trick to produce a further factorization.  

\begin{lemma} 
Let $f \in \Homeo_0(\S^n)$, let $x_1, x_2$ be distinct points in $\S^n$, and suppose $f(x_i) = x_i$.  Then $f = f_1 f_2$, where each $f_i$ fixes a neighbourhood of $x_i$.
\end{lemma}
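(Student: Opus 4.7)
The plan is to work in a chart $\R^n \cong \S^n \setminus \{x_1\}$ identifying $x_2$ with the origin, so that $f$ restricts to a homeomorphism $\bar f$ of $\R^n$ fixing $0$ and extending continuously to $\S^n$ by fixing $\infty = x_1$. In these coordinates, the decomposition $f = f_1 f_2$ is equivalent to producing $f_2 \in \Homeo_0(\S^n)$ that agrees with the identity on a small ball $B_r$ around $0$ and with $\bar f$ on the complement $\R^n \setminus B_R$ of some large ball: setting $f_1 := f f_2^{-1}$ then yields a homeomorphism equal to the identity on the neighbourhood $\S^n \setminus f(B_R)$ of $x_1$, while $f_2$ fixes the neighbourhood $B_r$ of $x_2$, as required.

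Constructing $f_2$ thus reduces to an interpolation problem on the annular region $A := B_R \setminus \overline{B_r}$: one must produce a homeomorphism $A \to \bar f(B_R) \setminus \overline{B_r}$ with prescribed boundary values $\id|_{\partial B_r}$ and $\bar f|_{\partial B_R}$. By continuity of $\bar f$ at $0$ and at $\infty$, we may choose $r$ small and $R$ large so that $\overline{B_r} \subset \bar f(B_R)$ and so that the two boundary conditions are supported in well-separated concentric sub-annuli of $A$.

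The existence of the interpolation is the substantive content of the lemma, and is where Kirby's torus trick enters, through its consequence the Edwards--Kirby local contractibility theorem from which Lemma~\ref{fragmentation} is derived. The strategy is to split $A$ into two concentric sub-annuli: on the inner one, the identity boundary condition is extended trivially, and on the outer one, the local contractibility theorem is applied. More precisely, after rescaling coordinates using the continuity of $\bar f$ at $\infty$, the boundary embedding $\bar f|_{\partial B_R}$ may be arranged to be arbitrarily close to the identity on a fixed compact region. At that point local contractibility produces a compactly supported isotopy from this embedding to the identity, which supplies the required interpolation on the outer sub-annulus and glues to the trivial identity extension on the inner one.

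The main technical obstacle is to make the rescaling step at $\infty$ precise uniformly in all dimensions, in a way that pairs correctly with the identity boundary condition at the inner boundary without relying on the annulus conjecture. This is exactly the situation Kirby's torus trick was designed for; it provides local contractibility in every dimension, which is the essential input not otherwise available. Once the interpolation is in hand, gluing it to the identity on $B_r$ and to $\bar f$ on $\R^n \setminus B_R$ produces $f_2$, and hence the decomposition $f = f_1 f_2$.
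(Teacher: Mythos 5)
Your first paragraph's reduction is fine and in fact matches the paper's: your $f_2$ is precisely a homeomorphism agreeing with $f$ near $x_1$ and with the identity near $x_2$. The genuine gap is the rescaling step at $\infty$. Continuity of $\bar f$ at $\infty$ only says $|\bar f(x)|\to\infty$ as $|x|\to\infty$; it does not make the displacement $|\bar f(x)-x|$ small compared to $|x|$, so the rescaled maps $x\mapsto R^{-1}\bar f(Rx)$ need not approach the inclusion on any fixed compact region. Concretely, if $f$ acts in your chart as $x\mapsto 2x$, or as a nontrivial linear rotation fixing $0$ and $\infty$, the rescaled map is independent of $R$ and stays a definite distance from the identity on the unit sphere, so the hypothesis of the Edwards--Kirby deformation theorem (smallness relative to a fixed compact pair) can never be arranged at your outer boundary, for any choice of $R$. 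The smallness that is genuinely available lives at a \emph{finite} fixed point: if the fixed point $p$ is an interior point of the chart, then on a small neighbourhood both $x$ and $f(x)$ lie within $\eps/2$ of $p$, whence $|f(x)-x|<\eps$. By sending $x_1$ to infinity you have placed the matching that needs the theorem exactly where its hypothesis fails.

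There is a second problem even if closeness were available: the rel-boundary interpolation you ask for---a homeomorphism of the round annulus onto $\overline{\bar f(B_R)}\setminus B_r$ equal to the identity on $\partial B_r$ and to $\bar f$ on $\partial B_R$---is more than local contractibility outputs. An isotopy of the outer boundary embedding does not by itself yield an embedding of the annulus with those exact boundary values (the images at different times may overlap), and without a smallness hypothesis the solvability of this interpolation problem is essentially the annulus theorem (the region between the round sphere $\partial B_r$ and the flat sphere $\bar f(\partial B_R)$), which is precisely what this lemma is designed to avoid: the paper emphasizes that its proof uses only the torus trick and not the annulus conjecture. The paper's proof sidesteps both issues by working in the opposite chart $\R^n\cong\S^n\setminus\{x_2\}$, where $x_1$ is a finite fixed point: there $|f(x)-x|<\eps$ automatically on a small neighbourhood $N$ of $x_1$, and Edwards--Kirby produces a compactly supported homeomorphism $h$ of $\R^n$ agreeing with $f$ on a compact neighbourhood $K\subseteq N$ of $x_1$, with no constraint on $h$ elsewhere---hence no prescribed boundary values and no annular region to identify. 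One then sets $f_2=h$ (so $f_2$ fixes a neighbourhood of $x_2$) and $f_1=ff_2^{-1}$, which is the identity on $f(\mathrm{int}\,K)$. If you rearrange your argument so that the Edwards--Kirby input is applied at the finite fixed point, and $f_2$ is only required to agree with $f$ near $x_1$ rather than to equal $\bar f$ exactly outside a prescribed ball with a prescribed filling, you recover the paper's proof.
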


\begin{proof}
Identify $\S^n \setminus \{x_2\}$ with $\R^n$.  Since $f$ fixes $x_1 \in \R^n$, there is a neighbourhood $N$ of $x_1$ such that $|f(x) - x| < \epsilon$ for all $x \in N$.   The topological torus trick produces a compactly supported homeomorphism $h$ of $\R^n$ such that $h(x) = f(x)$ for all $x$ in some compact set $K \subseteq N$ with nonempty interior.   Moving back to $\S^n$, $h$ induces a homeomorphism $f_2$ agreeing with $f$ on an open neighborhood of $x_1$ and fixing a neighbourhood of $x_2$.  Thus, $f_1:= f f_2^{-1}$ fixes a neighbourhood of $x_1$, and $f = f_1f_2$.  
\end{proof} 

It follows now that any $f \in \Homeo_0(\S^n)$ can be written as a product of at most 18 homeomorphisms, say $f_1, f_2, ...$ each of which is the identity on some neighborhood $N_i$ of a point $x_i$, hence supported in the ball $\S^n \setminus \{x_i\}$.    Fix a metric on $\S^n$ and let $V = V_\epsilon$.  Given $x_i \in \S^n$, let $g_i \in V$ be a homeomorphism such that $g_i (N_i)$ contains an $\epsilon$-radius ball about $x_i$.  Now $g_i f_i g_i^{-1}$ is supported on the ball $B_i \cong \B^n$ that is the complement of the $\epsilon$-radius ball about $x_i$.  Since, by Lemma \ref{Bn lemma}, $\Homeo_\partial(\B^n)$ has property (OB), $g_i f_i g_i^{-1} \in V^k$ for some fixed $k$, hence $f \in V^{18k}$.   Consequently, $\Homeo_0(\S^n)$ has property (OB).
\end{proof}

By contrast, with the exception of $M = \S^1$, as soon as $\pi_1(M)$ has an element of infinite order, the quasi-isometry type of $\Homeo_0(M)$ is highly nontrivial. In fact, we show that it contains an {\em isomorphic coarse embedding} of the Banach space $C([0,1])$,  i.e. an isomorphic embedding of the additive topological group $C([0,1])$, which is, moreover, a coarse embedding of $C([0,1])$ with the $\|\cdot\|_\infty$ metric. In many cases, we can even obtain an isomorphic quasi-isometric embedding of $C([0,1])$. Since every separable metric space isometrically embeds into $C([0,1])$, this shows that the quasi-isometry type of $\Homeo_0(M)$ is universal for all separable metric spaces in these cases.

For our calculations, the following basic lemma is needed.

\begin{lemma}[See also Lemma 3.10 in \cite{Militon}.] \label{militon lemma}
Let $M$ be a compact manifold with universal cover $\tilde M\overset{\pi}{\to}M$. Fix a compatible geodesic metric $d$ on $M$ and let  $\tilde{d}$ be the  geodesic lift to $\tilde{M}$. Fix also a maximal right-invariant metric $\partial$ on $\Homeo(M)$ and a compact set $D\subseteq \tilde M$.

Then there exist constants $K, C$ such that
$$
\partial(g, \id) \geqslant \frac 1K \sup_{x,y \in D} \tilde{d}(\tilde{g}(x),\tilde g(y)) - C
$$
holds for any $g \in \Homeo_0(M)$ and any lift $\tilde{g}\in \Homeo(\tilde{M})$ of $g$.  
\end{lemma}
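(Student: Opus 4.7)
The plan is to reduce to the fragmentation metric and exploit that ball-supported homeomorphisms lift to $\tilde M$ with bounded displacement. By Theorem~\ref{fragmentation norm thm}, $\partial$ is quasi-isometric to $d_{\mathcal B}$ for any finite cover $\mathcal B$ of $M$ by embedded open balls, so it is enough to prove an analogous lower bound with $\|g\|_{\mathcal B}$ in place of $\partial(g,\id)$ and then absorb the quasi-isometry constants at the end.

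For the cover, I use the compactness of $M$ together with the fact that $\pi$ is a local isometry to choose $\mathcal B=\{B_1,\ldots,B_k\}$ small enough that each $B_i$ is evenly covered by $\pi$ and every component of $\pi^{-1}(B_i)$ has $\tilde d$-diameter at most a fixed constant $\delta>0$. Given $g\in\Homeo_0(M)$, fragment $g=g_1\cdots g_n$ with $n=\|g\|_{\mathcal B}$ and $\supp(g_i)\subseteq B_{k_i}$. Since $B_{k_i}$ is simply connected and evenly covered, each $g_i$ admits a canonical lift $\tilde g_i\in\Homeo(\tilde M)$ which is the identity off $\pi^{-1}(B_{k_i})$ and preserves each component of this preimage. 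By the $\delta$-diameter bound, $\tilde d(\tilde g_i(z),z)\leqslant \delta$ for every $z\in\tilde M$.

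The composition $\tilde g_1\cdots\tilde g_n$ is then a lift of $g$, and any other lift of $g$ differs from it by composition with a deck transformation, which is a $\tilde d$-isometry. Consequently, for all $x,y\in D$,
$$
\tilde d(\tilde g(x),\tilde g(y)) = \tilde d(\tilde g_1\cdots\tilde g_n(x),\tilde g_1\cdots\tilde g_n(y)) \leqslant \tilde d(x,y) + 2n\delta \leqslant \diam(D) + 2\delta\,\|g\|_{\mathcal B}
$$
by the triangle inequality (each $\tilde g_i$ displaces any point by at most $\delta$). Rearranging, and substituting the quasi-isometry $\|\cdot\|_{\mathcal B}\sim \partial$ from Theorem~\ref{fragmentation norm thm}, delivers the claim with appropriate constants $K$ and $C$.

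The main technical point I anticipate is the initial choice of the cover: making each $B_i$ small enough so that it is evenly covered \emph{and} each component of its preimage has $\tilde d$-diameter at most $\delta$. This boils down to a uniform ``injectivity radius'' type statement for the local isometry $\pi$ on the compact manifold $M$, where one must take care not to confuse the extrinsic $\tilde d$-diameter with intrinsic path distances on a single component. Once that setup is in place, the rest is straightforward bookkeeping with the triangle inequality and the fact that deck transformations act isometrically on $(\tilde M,\tilde d)$.
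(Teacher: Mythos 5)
Your proof is correct and follows essentially the same route as the paper's: reduce to the fragmentation norm via Theorem \ref{fragmentation norm thm}, lift each ball-supported factor to a lift of uniformly bounded displacement, note that any other lift of $g$ differs by a deck transformation acting by isometries, and finish with the triangle inequality. The only cosmetic difference is that you arrange the cover so preimage components have small $\tilde d$-diameter explicitly, where the paper simply takes balls of diameter at most $1$ and asserts the same property of their preimages.
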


Recall that, for any cover $\tilde{M} \overset{\pi} \to M$, a \emph{lift} of $g \in \Homeo_0(M)$  to $\tilde{M}$ is an element $\tilde{g}\in \Homeo(\tilde{M})$ such that $\pi \tilde{g} = g \pi$.   

\begin{proof}
Let $\mathcal{B}$ be a finite cover of $M$ by embedded open balls of diameter at most $1$.  Then for each $B\in \mathcal B$, the pre-image $\pi^{-1}(B)$ is a disjoint union of open sets of diameter $\leqslant 1$. 
Let $g \in \Homeo(\tilde{M})$ and suppose $\| g\|_\mathcal{B} = n$, so that $g = g_1\cdots g_n$ for some $g_i$ supported in $B_{i}\in \mathcal B$.  Let $\tilde{g}_i$ be the lift of $g_i$ supported in $\pi\inv(B_{i})$.  Then 
$\sup_{x\in \tilde M} \tilde{d}(\tilde{g}_i(x), x) \leqslant 1$, and repeated applications of the triangle inequality gives
$$\sup_{x\in \tilde M} \tilde{d}(\tilde{g}_1\tilde{g}_2 \cdots \tilde{g}_n(x), x) \leqslant n.$$

Then
\[\begin{split}
\sup_{x, y\in D} \tilde{d}(\tilde{g_1}\cdots \tilde{g_n}(x), \tilde{g_1}\cdots\tilde{g_n}(y))
&\leqslant 
2\sup_{x\in D} \tilde{d}(\tilde{g_1}\cdots \tilde{g_n}(x), x)+{\rm diam}_{\tilde d}(D)\\
&\leqslant 
2\sup_{x\in \tilde M} \tilde{d}(\tilde{g_1}\cdots \tilde{g_n}(x), x)+{\rm diam}_{\tilde d}(D)\\
&\leqslant 
2n +{\rm diam}_{\tilde d}(D) = 2\| g \|_\mathcal{B}+{\rm diam}_{\tilde d}(D). 
\end{split}\]

Now if $\tilde{g}$ is any lift of $g$, then $\tilde g=f\tilde{g_1}\cdots \tilde{g_n}$ for some deck transformation $f$.  As deck transformations act by isometries on $(\tilde M, \tilde{d})$, it follows that 
$$
\sup_{x, y\in D} \tilde{d}(\tilde g(x), \tilde{g}(y))=\sup_{x, y\in D} \tilde{d}(\tilde{g_1}\cdots \tilde{g_n}(x), \tilde{g_1}\cdots\tilde{g_n}(y))\leqslant 2\| g \|_\mathcal{B}+{\rm diam}_{\tilde d}(D).
$$
Since, by Theorem \ref{fragmentation norm thm} , $\|g\|_\mathcal B$ is affinely bounded in terms of $\partial(g,{\rm id})$, this proves the lemma.  
\end{proof}

\begin{prop} \label{C prop 1}
Let $M$ be a compact manifold of dimension at least 2 with infinite fundamental group.  Then there is a  isomorphic coarse embedding $C([0,1]) \to \Homeo_0(M)$ and thus $\Homeo_0(M)$ is coarsely universal among all separable metric spaces. 
\end{prop}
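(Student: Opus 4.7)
The plan is to construct an isomorphic coarse embedding $\Phi$ from the Banach space of continuous functions $\B^{n-1}\to \R$ vanishing on $\partial \B^{n-1}$ into $\Homeo_0(M)$, and then invoke Milutin's theorem that this space is Banach-space isomorphic to $C([0,1])$. Since $\pi_1(M)$ is infinite, I would first locate an embedded simple closed curve $\gamma\subseteq M$ representing an infinite-order element of $\pi_1(M)$, together with a tubular neighborhood $N(\gamma)\cong \S^1\times \B^{n-1}$ in coordinates $(\theta, y)$. For $g$ as above, set
$$
\Phi(g)(x)=\begin{cases}(\theta+g(y),\,y)&\text{if }x=(\theta,y)\in N(\gamma),\\ x&\text{otherwise.}\end{cases}
$$
Because the shear amount depends only on the transverse coordinate $y$, a direct check gives $\Phi(g_1+g_2)=\Phi(g_1)\circ \Phi(g_2)$, and isotoping the shear through $\Phi(tg)$ for $t\in[0,1]$ shows $\Phi$ lands in $\Homeo_0(M)$. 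Thus $\Phi$ is an injective, continuous group homomorphism.

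For the upper bound $\partial(\Phi(g),\id)\leqslant K_1\|g\|_\infty+K_2$ on any maximal metric $\partial$, I would use Theorem~\ref{fragmentation norm thm}. Fix a finite cover $\mathcal B$ of $M$ by embedded open balls and $\epsilon>0$ small enough that every homeomorphism of displacement $<\epsilon$ lies in a fragmentation neighbourhood and has $d_{\mathcal B}$-norm at most some $k$. Then split $g=g_1+\cdots+g_N$ with $\|g_i\|_\infty<\epsilon$ and $N=\lceil\|g\|_\infty/\epsilon\rceil$, so that $\Phi(g)=\Phi(g_1)\cdots \Phi(g_N)$ has $\|\Phi(g)\|_{\mathcal B}\leqslant kN=O(\|g\|_\infty)$. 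For the matching lower bound I would apply Lemma~\ref{militon lemma}. Since $\gamma$ has infinite order, its lift $\tilde\gamma\subseteq \tilde M$ is a properly embedded arc, and the component of $\pi^{-1}(N(\gamma))$ containing $\tilde\gamma$ is bi-Lipschitz equivalent to $\R\times \B^{n-1}$; pick the natural lift of $\Phi(g)$ acting there by $(t,y)\mapsto (t+g(y),y)$ and as the identity elsewhere in $\tilde M$. With $D=\{0\}\times \B^{n-1}$ as the compact set of Lemma~\ref{militon lemma}, and $y_1\in \B^{n-1}$, $y_2\in \partial \B^{n-1}$ chosen so that $|g(y_1)|=\|g\|_\infty$ (using $g(y_2)=0$), one obtains
$$
\sup_{x,x'\in D}\tilde d\bigl(\tilde\Phi(g)(x),\tilde\Phi(g)(x')\bigr)\geqslant \tfrac{1}{K'}\|g\|_\infty,
$$
and therefore $\partial(\Phi(g),\id)\geqslant \tfrac{1}{KK'}\|g\|_\infty-C$.

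Combining the two bounds, $\Phi$ is simultaneously a coarse embedding and a topological group embedding onto its image. Composing with a Banach space isomorphism to $C([0,1])$, supplied by Milutin's theorem, yields the desired isomorphic coarse embedding, and coarse universality is then immediate from Banach's classical fact that every separable metric space embeds isometrically into $C([0,1])$. The main obstacle is Step 1: existence of a closed curve $\gamma$ of infinite order in $\pi_1(M)$. This is routine when $\pi_1(M)$ has an infinite-order element, but to cover the a priori possibility that $\pi_1(M)$ is infinite torsion one would instead need to replace $\gamma$ by an appropriate properly embedded arc in $\tilde M$, exploiting the unboundedness of the $\pi_1(M)$-orbit rather than a cyclic subgroup, and adapt the shear construction accordingly.
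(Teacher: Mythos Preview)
Your construction is essentially the same shear-along-a-tube idea as the paper's, and the upper bound is fine (the paper gets it in one line by noting that any continuous homomorphism into an (OB)-generated group is automatically Lipschitz for large distances, but your fragmentation argument works too). The use of $C_0(\B^{n-1},\partial\B^{n-1})$ together with Milutin in place of the paper's hyperplane $C_b([0,1])=\{f\in C([0,1]):f(0)=0\}$ is a harmless variation.

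There is, however, a genuine error in your lower bound. You assert that the lifted tube is \emph{bi-Lipschitz} equivalent to $\R\times\B^{n-1}$ and deduce a linear inequality $\sup_{x,x'\in D}\tilde d(\tilde\Phi(g)(x),\tilde\Phi(g)(x'))\geqslant \tfrac{1}{K'}\|g\|_\infty$. This is false in general: the lift $\tilde\gamma$ is only bi-Lipschitz to $\R$ inside $(\tilde M,\tilde d)$ when $\langle\gamma\rangle$ is \emph{undistorted} in $\pi_1(M)$, which is exactly the extra hypothesis of Proposition~\ref{C prop 2}, not Proposition~\ref{C prop 1}. (Think of $\gamma$ the center of a Heisenberg fundamental group: the lifted axis has length growing like $\sqrt{n}$, not $n$.) What saves you is that Proposition~\ref{C prop 1} only asks for a coarse embedding, and for that you need merely that $\|g_n\|_\infty\to\infty$ implies $\sup_{x,x'\in D}\tilde d(\tilde\Phi(g_n)(x),\tilde\Phi(g_n)(x'))\to\infty$. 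This does hold, because any infinite cyclic subgroup of the finitely generated group $\pi_1(M)$ is automatically coarsely embedded (balls in the word metric are finite), hence $n\mapsto\gamma^n(x_0)$ is a proper map into $(\tilde M,\tilde d)$. So replace the bi-Lipschitz claim by this properness statement and drop the explicit linear bound.

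On the infinite-torsion case you flag at the end: your instinct is right and matches the paper's fix. One uses K\"onig's lemma on the (locally finite) Cayley graph of $\pi_1(M)$ to extract a bi-infinite geodesic, giving a quasi-isometrically embedded copy of $\R$ in $\tilde M$; for $\dim M>2$ this can be perturbed to project to an embedded (noncompact) arc $\hat\gamma\subseteq M$, and one then runs the shear construction along a tubular neighborhood $\R\times\B^{n-1}$ of $\hat\gamma$, with the additional wrinkle that the $\R$-flow must be damped toward the ends so that the resulting maps are genuine homeomorphisms of $M$. For $\dim M=2$ the classification of surfaces guarantees an undistorted simple closed curve, so no separate argument is needed there.
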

This result is a natural generalization of \cite[Ex. 6.8]{CF} of Calegari and Freedman, which provides a coarse isomorphic embedding of $\Z$.   We obtain an even stronger result if more is known about the structure of the fundamental group is $M$. 

\begin{prop} \label{C prop 2}
Let $M$ be a compact manifold of dimension at least 2, and suppose that $\pi_1(M)$ contains an undistorted infinite cyclic subgroup.  Then there is a  isomorphic quasi-isometric embedding $C([0,1]) \to \Homeo_0(M)$.  Moreover, the image of this embedding can be taken to have support in any neighborhood of a curve representing a generator for the undistorted subgroup.  
\end{prop}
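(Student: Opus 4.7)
The plan is to first embed $C([0,1])$ linearly isometrically into the Banach space $C_0(\B^{n-1})$ of continuous real-valued functions on the cross-section of a tube around $\gamma$ that vanish on its boundary, and then embed $C_0(\B^{n-1})$ into $\Homeo_0(M)$ by sending each $F$ to the \emph{variable rotation} of the tube by $F(v)$ on each fibre $\{v\}$.

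Concretely, choose an embedded loop $\gamma\subset M$ representing a generator of the undistorted cyclic subgroup, contained in the prescribed neighbourhood, together with an embedded tubular neighbourhood $N\cong \S^1\times \B^{n-1}$ of $\gamma$. For $F\in C_0(\B^{n-1})$, define
$$
\Phi(F)(s,v)=(s+F(v),v)\qquad \text{on }N,
$$
and extend by the identity on $M\setminus N$; this is well defined since $F|_{\partial \B^{n-1}}=0$, and the isotopy $t\mapsto \Phi(tF)$ shows $\Phi(F)\in \Homeo_0(M)$. The map $\Phi\colon C_0(\B^{n-1})\til \Homeo_0(M)$ is a continuous group homomorphism, and injective: if $\Phi(F)=\id$ then $F(v)\in \Z$ for every $v$, which combined with continuity on the connected $\B^{n-1}$ and $F|_\partial=0$ forces $F\equiv 0$. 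The same argument applied to a limit difference $F_n-F$ shows that $\Phi^{-1}$ is continuous on the image.

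The quasi-isometric embedding estimate
$$
\tfrac 1K\|F\|_\infty - C\leqslant \|\Phi(F)\|_{\mathcal B}\leqslant K\|F\|_\infty + C
$$
is the core of the argument. For the upper bound, set $k=\max(1,\lceil \|F\|_\infty\rceil)$; since $\|F/k\|_\infty\leqslant 1$, the isotopy $t\mapsto \Phi(tF/k)$ from the identity to $\Phi(F/k)$ is uniformly Lipschitz in $d_\infty$ with constant $\mathrm{length}(\gamma)$. Partitioning $[0,1]$ into a uniformly bounded number of sub-intervals so that each segment of this isotopy lies inside a fragmentation neighbourhood (Lemma~\ref{fragmentation}) provides a universal constant $C_0$ with $\|\Phi(F/k)\|_{\mathcal B}\leqslant C_0$, whence $\|\Phi(F)\|_{\mathcal B}\leqslant kC_0\leqslant C_0\|F\|_\infty+C_0$ via the homomorphism identity $\Phi(F)=\Phi(F/k)^k$. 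For the lower bound we apply Lemma~\ref{militon lemma} to the canonical lift $\tilde\Phi(F)$ on the universal cover $\tilde M$: on the component $\tilde N_0\cong\R\times \B^{n-1}$ of $\pi^{-1}(N)$ containing a fixed lift $\tilde\gamma$ of $\gamma$, $\tilde\Phi(F)$ acts as $(s,v)\mapsto (s+F(v),v)$ and thus moves $(0,v)$ to $(F(v),v)$. Undistortedness of $\langle\gamma\rangle\leqslant \pi_1(M)$ and the Milnor--Schwarz theorem give $\tilde d\big((0,v),(F(v),v)\big)\geqslant |F(v)|/L - C'$ uniformly in $v\in \B^{n-1}$; fixing a compact $D\subset \tilde M$ containing $\{0\}\times \B^{n-1}$ together with a point $\tilde y$ off $\pi^{-1}(N)$ (which $\tilde\Phi(F)$ fixes) yields
$$
\sup_{x,y\in D}\tilde d\big(\tilde\Phi(F)(x),\tilde\Phi(F)(y)\big)\;\geqslant\; \|F\|_\infty/L - C'',
$$
and Lemma~\ref{militon lemma} provides the desired linear lower bound on $\|\Phi(F)\|_{\mathcal B}$.

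Finally, since $n-1\geqslant 1$, a linear Borsuk--Dugundji extension from an embedded arc $\alpha\subset \B^{n-1}$ multiplied by a cutoff equal to $1$ on $\alpha$ and vanishing on $\partial \B^{n-1}$ produces an isometric linear embedding $C([0,1])\hookrightarrow C_0(\B^{n-1})$; composing with $\Phi$ gives the desired isomorphic quasi-isometric embedding $C([0,1])\to \Homeo_0(M)$, whose image is supported inside $N$ and hence inside any preassigned neighbourhood of $\gamma$. The principal obstacle is the upper bound: the naive estimate fails because $\Phi(F)$ has bounded $d_\infty$-displacement regardless of $\|F\|_\infty$, so one must obtain uniform fragmentation control on the unit ball of $C_0(\B^{n-1})$ and then exploit the multiplicative structure $\Phi(F)=\Phi(F/k)^k$ to extract the required linear dependence on $\|F\|_\infty$.
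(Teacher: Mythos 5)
Your construction is essentially the paper's: a variable rotation supported in a tubular neighbourhood of an embedded loop representing the undistorted generator, with the quasi-isometric lower bound obtained exactly as in the paper via the canonical lift fixing the complement of the tube, a point of the fundamental domain fixed by that lift, undistortedness together with Milnor--Schwarz, and Lemma \ref{militon lemma}. The only cosmetic differences are that you route $C([0,1])$ through $C_0(\B^{n-1})$ by a Dugundji extension and cutoff rather than through the hyperplane $C_b([0,1])$, and that you prove the upper bound by hand (uniform fragmentation of the unit ball plus $\Phi(F)=\Phi(F/k)^k$, where uniform continuity of the tube parametrization, rather than the Lipschitz constant ``length$(\gamma)$'', is what the argument actually uses) where the paper simply invokes that a continuous isomorphic embedding of topological groups is automatically Lipschitz for large distances.
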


\begin{proof} [Proof of Propositions \ref{C prop 1} and \ref{C prop 2}]
Let $C_b([0,1])=\{f\in C([0,1])\del f(0)=0\}$ and observe that $C_b([0,1])$ is a closed hyperplane in $C([0,1])$, i.e., a closed linear subspace of codimension $1$. By Proposition 4.4.1 of \cite{kalton}, $C([0,1])$ is linearly isomorphic to its hyper-planes and thus to $C_b([0,1])$. It therefore suffices  to produce an isomorphic coarse, respectively quasi-isometric,  embedding $C_b([0,1])\to \Homeo_0(M)$. 

Let $\tilde{M} \overset{\pi}\to M$ be the universal cover and fix a compatible geodesic metric $d$ on $M$.  Denote the geodesic lift of $d$ to $\tilde{M}$ by $\tilde d$ and choose a compact fundamental domain $D\subseteq \tilde M$ and basepoint $z\in D$.

We begin with the case where $\pi_1(M)$ contains an undistorted infinite cyclic subgroup.  If $\dim(M) = 2$, then the classification of surfaces implies that we may find a simple closed curve $\gamma$ representing a generator of some such subgroup.  If $\dim(M) > 2$, any curve $\gamma$ representing an infinite order element may be perturbed so as to be embedded.  
Thus, in either case, we have an embedded curve $\gamma \subseteq M$ representing an infinite order, undistorted element of $\pi_1(M)$.  

Let $\alpha\colon \B^{n-1} \times \S^1 \to M$ be an embedding with $\alpha(\{0\} \times \S^1) = \gamma$
and define a homomorphism $\phi\colon C_b([0,1]) \to \Homeo_0(M)$ by 
$$
\phi(f)(x)=\begin{cases}
x &\text{ if } x\notin {\rm im}\,\alpha, \\
\alpha\big(b, \, e^{i(\theta + f(1-\norm{b}))}\big) &\text{ if } x = \alpha(b, \, e^{i \theta}).
\end{cases}
$$
It is easy to check that $\phi$ is a well defined isomorphic embedding of the topological group $C_b([0,1])$ into $\Homeo_0(M)$ 
and so, in particular, is \emph{Lipschitz for large distances} with respect to any maximal metric $\partial$ on $\Homeo_0(M)$, meaning that 
$$\partial (\phi(g), \phi(f)) \leqslant K \norm{f-g}_\infty + C$$ 
for some constants $K$ and $C$.  We will show now that it is also a quasi-isometric embedding.  

Let $f \in C_b([0,1])$ and let $n=\big\lfloor \norm{f}_\infty\big\rfloor$.  Let $\tilde{\phi}(f)$ denote the lift of $\phi(f)$ that fixes points outside of $\pi^{-1}(\alpha(\B^{n-1} \times \S^1))$.  
By the construction of $\phi(f)$, there exists $x \in D$ such that 
$$
\tilde d\big(\tilde\phi(f)(x), \gamma^{n}(x)\big)\leqslant {\rm diam}_{\tilde d}(D).
$$
In particular, as $\tilde\phi(f)$ also fixes a point $y$ of $D$, we have 
\[\begin{split}
\sup_{z,v\in D}\tilde d(\tilde\phi(f)(z),\tilde\phi(f)(v))
&\geqslant  \tilde d(\tilde\phi(f)(x),\tilde\phi(f)(y))\\
&\geqslant  \tilde d(\tilde\phi(f)(x),y)\\
&\geqslant \tilde d(\gamma^n(x),x)-\tilde d(\tilde\phi(f)(x),\gamma^n(x))-\tilde d(x,y)\\
&\geqslant \tilde d(\gamma^n(x),x)-2{\rm diam}_{\tilde d}(D).
\end{split}\]
So, as $n\mapsto \gamma^n(x)$ is a quasi-isometric embedding of $\N$ into $(\tilde M,\tilde d)$ with constants independent of $x$, using Lemma \ref{militon lemma} we see that also $f\mapsto \phi(f)$ is a quasi-isometric embedding.

Note that if instead $\gamma$ generates a distorted but coarsely embedded copy of $\Z$ inside $\pi_1(M)$, this construction gives a coarse embedding $C_b([0,1] )\to \Homeo_0(M)$.  

For the more general case where $\pi_1(M)$ is infinite, but $M$ is not known even to have an undistorted infinite order element (whether this case can occur appears to be an open question), we may still produce a quasi-isometric embedding of $\R$ in $\tilde{M}$ when $\tilde{M}$ is endowed with a lifted geodesic metric $\tilde d$.  To see this, take geodesic segments $\gamma_n$ of length $2n$ centered at $\id$ in the Cayley graph of $\pi_1(M)$;  by K\"onig's lemma, the union of all paths $\gamma_n$ has an infinite branch, which gives an isometrically embedded copy of the metric space $\Z$ in $\pi_1(M)$, hence a quasi-isometric homeomorphic embedding of $\R$ into $(\tilde{M}, \tilde d)$.
 
Provided that $\dim(M) > 2$, we may perturb the embedding of $\R$ in $\tilde{M}$ so that it remains quasi-isometric, while its projection to $M$ under the covering map $\pi\colon \tilde{M} \to M$ is also homeomorphically embedded.  In fact, by moving each point no more than $\epsilon$, we can arrange so that the image of $[-n, n]$ under projection to $M$ has a neighborhood of diameter $\epsilon^{2n}$ that avoids the rest of the curve.   Call this embedded infinite curve $\hat{\gamma} \subseteq M$, it will play the role of $\gamma$ in the previous construction.  Our choice of $\hat{\gamma}$ means that there is a homeomorphic embedding $\alpha\colon \B^{n-1} \times \R \to M$ with $\alpha(\{0\} \times \R) = \hat{\gamma}$.

Modifying the previous construction, we can define an isomorphic embedding $\phi$ of the topological group  $(C_b([0,1]),+)$ into $\Homeo_0(M)$, supported in the image of $\alpha$, that translates points along the curve $\hat\gamma$.   For this, let $g_t$ be a transitive $1$-parameter flow on $\R$ such that $|g_t(x) - x| \to 0$ as $|x| \to \infty$.  Such a flow may be obtained by conjugating the standard translation flow on $\R$ by a suitable homeomorphism, for example, a homeomorphism that behaves like $x \mapsto e^x$ near $+ \infty$ and $x \mapsto -e^{-x}$ near $-\infty$.   Note that, by appropriate choice of conjugacy, we can have $|g_t(x) - x|$ approach 0 as slowly as we like.  

Now, given $f \in C_b([0,1])$, define 

$$
\phi(f)(x)=\begin{cases}
x &\text{ if } x \notin{\rm im} \,\alpha, \\
\alpha\big(b, \, g_{f(1-\norm{b})}(t) \big) &\text{ if } x = \alpha(b, \, t).
\end{cases}
$$

As before, $\phi$ gives a well defined isomorphic embedding of the topological group $C_b([0,1])$ into $\Homeo_0(M)$.  The role of the flow $g_t$ is to ensure that this actually defines a {\em homeomorphism} $\phi(f)$ of $M$ and not only a permutation.

Now, if $f_n\in C_b([0,1])$ is a sequence with $\norm{f_n}_\infty\to \infty$, then, as before, we find $\tau_n\in \pi_1(M)$ so that $\tilde d\big(\phi(f_n)[D],  \tau_n[D]\big)\leqslant {\rm diam}_{\tilde d}(D)$ and $\tau_n\to \infty$ in $\pi_1(M)$. Repeating the argument above, we thus see that $\{\phi(f_n)\}_n$ cannot be relatively (OB) in $\Homeo_0(M)$ and thus that $\phi$ is a coarse embedding.
\end{proof}

\begin{rem}
The construction in the proof above also naturally gives a quasi-isometric embedding $\pi_1(M) \to \Homeo_0(M)$.   Here is a brief sketch of the required modification.  Take a finite generating set for $\pi_1(M)$ and choose a loop $\gamma_i \subseteq M$ representing each generator -- by abuse of notation, denote the generator also by $\gamma_i$.  
For each $i$, let $\alpha_i\colon \B^{n-1} \times \S^1 \to M$ be an embedding with image in a small neighborhood of $\gamma_i$, and use the construction in the proof of Proposition \ref{C prop 2} to produce quasi-isometric embeddings $\phi_i\colon C_b([0,1]) \to \Homeo_0(M)$ with $\phi_i\big[C_b([0,1])\big]$ consisting of homeomorphisms supported in $\alpha_i(\B^{n-1} \times \S^1)$.   Let $f \in C_b([0,1])$ be the constant function $f(x) = x$.   The reader familiar with the ``point-pushing" construction in the study of based mapping class groups will recognize $\phi_i(f)$ as a point-push around $\gamma_i$.  For each element $\gamma$ of $\pi_1(M)$, choose an expression $\gamma = \gamma_{i_1} \cdots \gamma_{i_k}$ of minimal length, and set $\Phi(\gamma) = \phi_{i_1}(f) \cdots \phi_{i_k}(f)$.  One then checks that this gives a quasi-isometric embedding, though not a homomorphism, $\Phi\colon \pi_1(M) \to \Homeo_0(M)$.    Whether such a quasi-isometric embedding can be constructed so as to also be a homomorphism appears to be an interesting question, especially for the case of compact 2-manifolds of higher genus.   
\end{rem}

In the special case of $2$-manifolds, Militon also proved the reverse of the inequality given in Lemma \ref{militon lemma}.   

\begin{thm}[Militon, Theorem 2.9 in \cite{Militon}] \label{militon thm}
Let $M$ be a compact $2$-dimensional manifold, possibly with boundary, with universal cover $\tilde M\overset{\pi}\to M$. Fix a geodesic metric $d$ on $M$ with geodesic lift $\tilde d$, $\mathcal{B}$ a finite cover of $M$ by embedded open balls and $D\subseteq \tilde M$ a compact subset with $\pi[D]=M$.  Then there exist $K$ and $C$ such that 
$$
\frac 1K \sup_{x,y \in D} \tilde{d}(\tilde{g}(x), \tilde g(y)) - C \leqslant d_\mathcal B(g,1) \leqslant K \sup_{x,y \in D} \tilde{d}(\tilde{g}(x), \tilde g(y)) + C$$
holds for all $g \in \Homeo_0(M)$ and lifts $\tilde{g}\in \Homeo(\tilde{M})$.  
\end{thm}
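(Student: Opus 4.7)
The lower bound of the inequality is exactly the content of Lemma \ref{militon lemma}, so the plan is entirely devoted to the upper bound
$$d_\mathcal B(g,1) \;\leqslant\; K \sup_{x,y\in D}\tilde d(\tilde g(x),\tilde g(y)) + C.$$
Write $L := \sup_{x,y\in D}\tilde d(\tilde g(x),\tilde g(y))$; observe that $L$ is independent of the choice of lift, since any two lifts differ by a deck transformation, which acts by isometries on $(\tilde M,\tilde d)$. I would fix a convenient lift $\tilde g$, namely one minimising $\tilde d(z,\tilde g(z))$ for a chosen basepoint $z\in D$, so that $\tilde g(D)$ lies in a $\tilde d$-ball of radius at most $L+\diam_{\tilde d}(D)$ around $z$.

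The heart of the argument is to construct an isotopy $(g_t)_{t\in[0,1]}$ from $g_0=\id$ to $g_1=g$, together with its lift $(\tilde g_t)$ starting at $\tilde g_0=\id$, such that the pointwise displacement $\sigma(t):=\sup_{x\in D}\tilde d(\tilde g_t(x),x)$ is continuous in $t$ and bounded above linearly in $L$. Granted such an isotopy, I would partition $[0,1]$ into $N=O(L)$ subintervals $0=t_0<\cdots<t_N=1$ chosen so that each $h_i:=g_{t_{i+1}}g_{t_i}^{-1}$ has $d_\infty$-displacement smaller than the Lebesgue number of the cover $\mathcal B$. The Fragmentation Lemma (Lemma \ref{fragmentation}) then writes each $h_i$ as a product of $|\mathcal B|$ ball-supported pieces, yielding
$$\|g\|_\mathcal B \;\leqslant\; |\mathcal B|\cdot N \;=\; O(L),$$
which is the required estimate.

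The main obstacle is producing an isotopy with such well-controlled displacement, and this is precisely where the assumption $\dim M=2$ enters. For surfaces one has dimension-specific tools — the Schoenflies theorem, the classification of surfaces, and a canonical-form theory for surface isotopies built on the Alexander trick — which should allow one to straighten the given $\tilde g$ into a displacement-controlled isotopy whose lifted displacements along the way never exceed a constant multiple of $L$. A candidate construction is to build $\tilde g_t$ piecewise, first along a quasigeodesic path in $\pi_1(M)$ of length $O(L)$ connecting $\id$ to the deck transformation that best approximates $\tilde g$ on $D$, then using an Alexander-type isotopy to absorb the bounded-displacement remainder. In higher dimensions the analogous step is wide open (and closely related to stability questions for homeomorphisms), which is why the statement is restricted to surfaces. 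I would expect the bulk of the work to be this surface-specific geometric construction, with the subsequent subdivision-and-fragmentation counting being essentially automatic once the isotopy is in place.
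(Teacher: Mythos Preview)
The paper does not prove this statement; it is quoted as Theorem 2.9 of Militon \cite{Militon} with no argument beyond the remark that ``Militon's proof relies both on 2-dimensional plane topology, as well as on the particular algebraic structure of fundamental groups of surfaces.'' So there is no in-paper proof to compare your proposal against.

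That said, what you have written is a strategy, not a proof, and you say so yourself. The reduction you outline --- build an isotopy $g_t$ whose lift has displacement $O(L)$, then subdivide into $O(L)$ pieces each lying in the fragmentation neighbourhood --- is sound in spirit, but the construction of such an isotopy is the entire content of the upper bound, and you have not supplied it. Your ``candidate construction'' (follow a quasigeodesic in $\pi_1(M)$, then absorb a bounded-displacement remainder by an Alexander-type move) is too vague to evaluate: a path in $\pi_1(M)$ is not in any obvious way realised by an isotopy of $M$ with controlled lifted displacement, and the Alexander trick produces an isotopy on a ball rather than a displacement bound for a surface homeomorphism whose lift has small $\tilde d$-displacement but which may be far from the identity in $d_\infty$. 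There is also a smaller gap in the subdivision step: knowing that $\sigma(t)\leqslant CL$ for all $t$ does not by itself force $N=O(L)$; you need control on the \emph{length} of the lifted isotopy (or a unit-speed parametrisation), not merely a bound on its endpoints. In short, the outline is compatible with the one-line description the paper gives of Militon's argument, but the substantive surface-specific work that makes the theorem true is entirely missing.
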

Militon's proof relies both on 2-dimensional plane topology, as well as on the particular algebraic structure of fundamental groups of surfaces, so the following question remains open. 

\begin{quest} Does Theorem \ref{militon thm} hold for $n$-manifolds, when $n \geqslant 3$?
\end{quest}

As a more basic step, one can also ask the following.  

\begin{quest}
Are there examples of manifolds with finite or trivial fundamental group that fail to have property (OB)?   
\end{quest}


\section{Covers, extensions, and bounded cohomology} \label{covers sec}

In this section, we relate the large scale geometry of the group of homeomorphisms of a manifold $M$ to the group of homeomorphisms of its universal cover that are equivariant with respect to the action of $\pi_1(M)$.  In other words, this is the group of \emph{lifts} of homeomorphisms of $M$ to $\tilde{M}$.   A basic example to keep in mind is $M = \T^n$.  In this case, the group of lifts of homeomorphisms to the universal cover can be identified with $\Homeo_{\Z^n}(\R^n)$, the orientation preserving homeomorphisms of $\R^n$ that commute with integral translations.  

The main theorem of this section, Theorem \ref{A section}, gives a condition for the group of lifts to be quasi-isometric to the direct product $\Homeo_0(M) \times \pi_1(M)$.   As one application, we show that ${\rm Homeo}_{\Z^n}(\R^n)$ is quasi-isometric to ${\rm Homeo}_0(\T^n)\times \Z^n$, and in the course of the proof produce a bornologous section of the natural map $\Homeo_{\Z^n}(\R^n)\overset{\pi}\longrightarrow \Homeo_0(\T^n)$, that is, a section of $\pi$ that is simultaneously a coarse embedding of $\Homeo_0(\T^n)$ into $\Homeo_\Z(\R)$.  
This result is particularly interesting in the context of bounded cohomology.  A theorem of A. M. Gersten (see Theorem \ref{gersten thm} below) states that, if $H$ is a \emph{finitely generated} group and a central extension 
$$0 \to \Z^n \to G \overset{\pi}\to H \to 1$$
represents a bounded class in the second cohomology of $G$, then there exists a quasi-isometry $\phi: H \times \Z^n \to G$ such that, for all $(h, x) \in H \times \Z^n$, the image $\pi \phi(h, x)$ is a uniformly bounded distance from $h$ (with respect to any fixed word metric on $H$).  The converse to this theorem appears to be open.   
Here, we give a new proof of Gersten's theorem that is applicable in a broader context, and then show that a variation on the converse is false, namely, that the central extension
$$
0\til \Z^n\to  \Homeo_{\Z^n}(\R^n)\overset{\pi}\longrightarrow \Homeo_0(\T^n) \to 1
$$
does not represent a bounded cohomology class in $H^2({\rm Homeo}_0(\T^n), \Z^n)$.

\subsection{Extensions via universal covers} \label{extension sec}
We begin with a general lemma on bornologous sections of central extensions, generalizing well-known facts from the case of discrete groups. To fix terminology, say that a subgroup $K$ of a topological group $G$ is {\em coarsely embedded} if the inclusion map $K\inj G$ is a coarse embedding.

\begin{prop}\label{commuting subgroups}
Suppose $G$ is a topological group generated by two commuting subgroups $K$ and $H$, i.e., so that $[K,H]=1$. Assume also that $K$ is coarsely embedded and that the quotient map $\pi\colon G\til G/K$ admits a bornologous section  $\phi\colon G/K\til H$. Then $K\times G/K$ is coarsely equivalent to $G$ via the map
$$
(x,f)\mapsto x\phi(f).
$$
\end{prop}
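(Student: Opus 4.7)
The plan is to establish that the map $\Psi\colon K\times G/K\til G$ given by $\Psi(x,f)=x\phi(f)$ is in fact a bijection and that both $\Psi$ and its inverse are bornologous, which is enough for a coarse equivalence. The inverse is given by
$$
\Psi^{-1}(g)=\big(g\phi(\pi(g))^{-1},\;\pi(g)\big),
$$
which makes sense because $\pi\big(g\phi(\pi(g))^{-1}\big)=\pi(g)\pi(g)^{-1}=1$ in $G/K$, so the first coordinate lies in $K$. A direct computation shows $\Psi\circ\Psi^{-1}=\id_G$ and $\Psi^{-1}\circ\Psi=\id_{K\times G/K}$, using that $\phi$ is a section of $\pi$.

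For the forward direction, I would compute, using the key hypothesis $[K,H]=1$ together with $x,x'\in K$ and $\phi(f),\phi(f')\in H$,
$$
\Psi(x,f)\Psi(x',f')^{-1}=x\phi(f)\phi(f')^{-1}x'^{-1}=xx'^{-1}\cdot \phi(f)\phi(f')^{-1}.
$$
Now suppose $(x,x')$ lies in a coarse entourage of $K$ and $(f,f')$ in one of $G/K$. Since $K$ is coarsely embedded, $xx'^{-1}$ lies in a relatively (OB) subset of $G$; since $\phi$ is bornologous and every continuous homomorphism between topological groups (in particular the inclusion $H\hookrightarrow G$) is bornologous, $\phi(f)\phi(f')^{-1}$ also lies in a relatively (OB) subset of $G$. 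The product of two relatively (OB) sets is relatively (OB), so $(\Psi(x,f),\Psi(x',f'))$ lies in a coarse entourage of $G$.

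For the reverse direction, assume $gg'^{-1}$ lies in a relatively (OB) set $A\subseteq G$, and write $(x,f)=\Psi^{-1}(g)$, $(x',f')=\Psi^{-1}(g')$. Then $ff'^{-1}=\pi(gg'^{-1})\in\pi(A)$ is relatively (OB) in $G/K$ because $\pi$ is continuous and hence bornologous. Solving the formula above for $xx'^{-1}$ gives
$$
xx'^{-1}=gg'^{-1}\cdot \phi(f')\phi(f)^{-1},
$$
and the two factors on the right are each relatively (OB) in $G$ by the same argument as before. Thus $xx'^{-1}$ lies in a relatively (OB) subset of $G$ whose intersection with $K$ is a relatively (OB) subset of $K$; this final step is exactly where the hypothesis that $K$ is coarsely embedded in $G$ is used, and it is the main conceptual obstacle — one must pass back from the ambient coarse structure of $G$ to the intrinsic coarse structure of $K$. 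Putting the two coordinates together shows $\Psi^{-1}$ is bornologous, completing the proof.
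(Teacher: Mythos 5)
Your argument is correct and follows essentially the same route as the paper: both verify that the explicit inverse $g\mapsto\big(g\phi(\pi(g))^{-1},\pi(g)\big)$ is bornologous via the commutation identity $gg'^{-1}=xx'^{-1}\phi(f)\phi(f')^{-1}$ and then use coarse embeddedness of $K$ exactly where you do, to pull a relatively (OB) subset of $G$ lying in $K$ back to a relatively (OB) subset of $K$. The only difference is that you bypass the paper's intermediate step of writing each relatively (OB) set $A\subseteq G$ inside a product $BC$ with $B\subseteq H$, $C\subseteq K$, which is a harmless streamlining since products of relatively (OB) sets are relatively (OB).
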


\begin{proof}For simplicity of notation, set $\tilde g=\phi\pi(g)$. As $\phi$ and $\pi$ are both bornologous, so is the composition $g\in G\mapsto \tilde g\in H$.

The first step is to show that every relatively (OB) subset $A\subseteq G$ is contained in some product $BC$, where $B\subseteq H$ and $C\subseteq K$  are relatively (OB) in $H$ and $K$ respectively.   To see this, let $A\subseteq G$ be a relatively (OB) subset and let $B=\tilde A$, which is relatively (OB) in $H$.  Let $C = (B\inv A)\cap K$, which is relatively (OB) in $G$ and thus also in $K$, since $K$ is coarsely embedded.   If $a\in A$, then $\tilde a\inv a\in (B\inv A)\cap K=C$, so $a=\tilde a\cdot \tilde a\inv a \in B\cdot C$.  Thus, $A\subseteq BC$ as required.

We now claim that $g\in G\mapsto\tilde g\inv g\in K$ is bornologous. To see this, suppose that $A\subseteq G$ is relatively (OB) and find $B\subseteq H$ and $C\subseteq K$ as above. Fix first a relatively (OB) set $D\subseteq H$ so that
$$
fg\inv \in A\saa \tilde f\tilde g\inv \in D.
$$
Now, suppose $g,f\in G$ are  given  and set $x=\tilde g\inv g\in K$ and $y=\tilde  f\inv f\in K$. Assume that
$$
 \tilde f\tilde g\inv\cdot yx\inv = fg\inv\in A\subseteq BC.
$$
Since $\tilde f\tilde g\inv \in H$, $B\subseteq H$, $ yx\inv \in K$ and  $C\subseteq K$, there is some $a\in H\cap K$ with $ \tilde f\tilde g\inv a\inv \in B$ and $ayx\inv \in C$. Therefore
$$
a\inv \in \tilde g\tilde f\inv B\subseteq D\inv B
$$
and thus also
$$
 yx\inv\in a\inv C \subseteq D\inv BC\cap K.
$$
In other words, 
$$
 fg\inv\in A\saa  \big(\tilde f\inv f\big)\cdot\big(\tilde g\inv g\big)\inv=yx\inv \in D\inv BC\cap K.
$$
As $K$ is coarsely embedded, $D\inv BC\cap K$ is relatively (OB) in $K$, so this shows that the map $g\mapsto \tilde g\inv g$ is bornologous as claimed.

It thus follows that also $g\in G\mapsto (\tilde g\inv g,\pi(g))\in K\times G/K$ is bornologous with inverse
$$
(x,f)\in K\times G/K\mapsto x\phi(f)\in G.
$$
Since the inverse is a composition of the bornologous map $(x,f)\in K\times G/K\mapsto (x,\phi(f))\in K\times H$ and the continuous homomorphism $(x,h)\in K\times H\mapsto xh\in G$, it is also bornologous, whence $g \mapsto (\tilde g\inv g,\pi(g))$ is a coarse embedding of $G$ into $K\times G/K$.
Moreover, being surjective, we see that it is a coarse equivalence.
\end{proof}

\begin{exa}
An intended application of Proposition \ref{commuting subgroups} is when $G$ is a topological group, $K\triangleleft G$ is a discrete normal subgroup and $H\leqslant G$ is a subgroup without proper open subgroups. Then the conjugation action of $H$ on $K$ is continuous and so, as $H$ cannot act non-trivially and continuously on a discrete set, we find that $[K,H]=1$. 
\end{exa}

\begin{rem}
The assumption that $K$ is coarsely embedded in the statement of Proposition \ref{commuting subgroups} is not superfluous.  Whereas a closed subgroup $K$ of a locally compact group is automatically coarsely embedded, this may not be the case for general Polish groups. For an extreme counter-example, consider the homeomorphism group of the Hilbert cube, ${\rm Homeo}([0,1]^\N)$.  This is a universal Polish group, i.e., contains every Polish group as a closed subgroup \cite{uspenskii}. Nevertheless it has property (OB) and is therefore quasi-isometric to a point \cite{OB}. Thus, for example, $\R$ can be viewed as a closed subgroup of ${\rm Homeo}([0,1]^\N)$, but is evidently not coarsely embedded.
\end{rem}

One must exercise a bit of care when constructing bornologous sections. Indeed, as the next example shows, it is not in general sufficient that the section be length preserving. 

\begin{exa}
Let $X$ be a Banach space and $Y$ a closed linear subspace with quotient map $\pi\colon X\til X/Y$. By the existence of Bartle--Graves selectors (Corollary 7.56 \cite{fabian}), 
there is a continuous function $\phi\colon X/Y\til X$ lifting $\pi$ so that
$\norm{\phi\pi(x)}\leqslant 2\norm{\pi(x)}$ for all $x\in X$. Thus, for all $x\in X$, 
$$
\norm{x}\leqslant \norm{\phi\pi(x)}+\norm{x-\phi\pi(x)}\leqslant 2\norm{\pi(x)}+\norm{x-\phi\pi(x)},
$$
while 
$$
\norm{\pi(x)}+\norm{x-\phi\pi(x)}\leqslant \norm x+\norm{x}+\norm{\phi\pi(x)}\leqslant 4\norm x.
$$
Combining the two estimates, we find that $x\mapsto \big(\pi(x), x-\phi\pi(x)\big)$ is a bijection between $X$ and $X/Y\oplus Y$ so that 
$$
\frac 12 \norm x\leqslant \norm{\pi(x)}+\norm{x-\phi\pi(x)}\leqslant 4\norm x.
$$
Nevertheless, this map need not be a coarse equivalence between $X$ and $X/Y\oplus Y$.
For example, $\ell^1$ is a surjectively universal separable Banach space, meaning that, for every separable Banach space $Z$ there is a closed linear subspace $Y\subseteq \ell^1$ so that $\ell^1/Y\iso Z$.  So find some closed linear subspace $Y\subseteq \ell^1$ so that $\ell^1/Y\iso c_0$. Then $\ell^1$ is not coarsely equivalent to $c_0\oplus Y$, since $c_0$ does not even coarsely embed into $\ell^1$.
\end{exa}


We now specialize to the case of homeomorphism groups of manifolds and lifts to covers.  Fix a compact manifold $M$ with universal cover $p\colon \tilde M\to M$, and let $G \leqslant \Homeo(\tilde{M})$ denote the group of lifts of homeomorphisms of $M$ to the universal cover $\tilde{M}$.   There is a natural projection $\pi\colon G \to \Homeo_0(M)$ defined by $\big(\pi g \big) (px) = p \big(g(x)\big)
$
for any $x\in \tilde M$ and $g\in G$.  This 
gives a short exact sequence
$$
\pi_1(M) \to G  \overset{\pi}\longrightarrow \Homeo_0(M).
$$
where $\pi_1(M) \leqslant \Homeo(\tilde{M})$ is the group of deck transformations.   

We now describe a subgroup of $G$ that will be used to apply Proposition \ref{commuting subgroups}, this is the group of lifts of paths.  If $f_t$ is a path in $\Homeo_0(M)$ from $f_0 = \id_M$ to $f_1$, then there is a unique lift $\tilde{f}_t$ of the path in $G$ with $\tilde{f}_0 = \id_{\tilde M}$, and $ \pi \tilde{f}_t = f_t$ for all $t$.    (Uniqueness follows from the fact that any two lifts of $\tilde{f}_t$ differ by a deck transformation, and the deck group is discrete.)  
Concatenating paths shows that the set 
$$
H = \{ \tilde{f_1} \in G\mid \tilde{f_t} \text{ is a lifted path with } \tilde{f}_0 =\id_{\tilde M} \}
$$
is in fact a subgroup of $G$.  As $H$ is path-connected and $\pi_1(M)$ is a discrete normal subgroup of $G$, it follows that $H$ and $\pi_1(M)$ commute, i.e., $[\pi_1(M),H]=1$.   Finally, $H$ and $\pi_1(M)$ generate $G$, since for any $g \in G$, we can let $\tilde{g} \in H$ denote the endpoint of a lift of a path from $\id$ to $\pi(g)$, in which case $\tilde{g} g^{-1} \in \pi_1(M)$, and $g \in H\pi_1(M)$. 

Thus, by Proposition \ref{commuting subgroups}, if we can show that the quotient map
$$
\pi\colon G\til {\rm Homeo}_0(M)
$$
admits a bornologous section $\phi\colon {\rm Homeo}_0(M)\til H$, and that $\pi_1(M)$ is coarsely embedded in $G$, then $G$ will be coarsely equivalent to $\Homeo_0(M) \times \pi_1(M)$.   Our next goal is to rephrase this in terms of a condition on the fundamental group $\pi_1(M)$.

Let $A = \pi_1(M) \cap H$ be the group consisting of lifts of loops in $\Homeo_0(M)$.  As $A$ is central in $\pi_1(M)$, it can be considered a subgroup of the abelianization $H_1(M; \Z)$, and so is finitely generated.   This gives a central extension
$$
A \to H \overset{\pi}\to \Homeo_0(M).
$$
The content of the following theorem is to reduce the problem of finding a bornologous section $\Homeo_0(M) \to H$ to finding a bornologous section of $\pi_1(M)/A \to \pi_1(M)$.

\begin{thm}\label{A section}
Assume that the quotient map $\pi_1(M) \to \pi_1(M)/A$ admits a bornologous section $\psi\colon \pi_1(M)/A \to \pi_1(M)$.  Then there is a bornologous section 
$$
\phi\colon {\rm Homeo}_0(M)\til H
$$ 
of $\pi$, whence the group $G$ of all lifts is quasi-isometric to $\Homeo_0(M) \times \pi_1(M)$.  
\end{thm}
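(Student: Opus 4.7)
The plan is to apply Proposition \ref{commuting subgroups} to $G$ with $K = \pi_1(M)$ (the deck transformations) and the subgroup $H$ of lifts of paths. The conditions $[K, H] = 1$ and $G = KH$ were already established in the paragraph preceding the theorem, so to conclude $G \sim \pi_1(M) \times \Homeo_0(M)$ via Proposition \ref{commuting subgroups}, two items remain: (a) $\pi_1(M)$ is coarsely embedded in $G$, and (b) there is a bornologous section $\phi \colon \Homeo_0(M) \to H$ of $\pi$.

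For (a), a Milnor--Schwarz argument suffices. Fix a compatible geodesic metric $d$ on $M$ with geodesic lift $\tilde d$ on $\tilde M$, and a basepoint $x_0 \in \tilde M$. Since $\pi_1(M)$ acts on $(\tilde M, \tilde d)$ cocompactly, properly discontinuously and by isometries, the orbit map $\gamma \mapsto \gamma(x_0)$ is a quasi-isometry onto $(\tilde M, \tilde d)$. An adaptation of Lemma \ref{militon lemma}, using a fragmentation-type maximal metric on $G$ whose generators are supported in lifts to $\tilde M$ of balls from a finite cover of $M$, yields an affine bound $\tilde d(g(x_0), x_0) \leqslant K\,\|g\|_G + C$ for all $g \in G$. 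Applied to $\gamma \in \pi_1(M)$ this gives $|\gamma|_{\pi_1} \lesssim \|\gamma\|_G$; the reverse inequality is trivial since a finite generating set of $\pi_1(M)$ has bounded $G$-norm.

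For (b), the main content, observe first that $A$ is central in all of $G$, not merely in $H$ or $\pi_1(M)$, because $[H, \pi_1(M)] = 1$ and $A \subseteq H \cap \pi_1(M)$. Since $G = H\,\pi_1(M)$ with $H \cap \pi_1(M) = A$, the quotient $G/A$ is naturally isomorphic as a topological group to $\Homeo_0(M) \times Q$, where $Q = \pi_1(M)/A$, and this identification carries $H$ onto $\Homeo_0(M) \times \{0\}$ and $\pi_1(M)$ onto $\{\id\} \times Q$. Let $\tau \colon G \to Q$ denote the associated continuous homomorphism; it is bornologous since $Q$ is discrete. Given any bornologous section $s \colon \Homeo_0(M) \to G$ of $\pi$, set
\[
\phi(f) \;:=\; s(f)\cdot \psi\bigl(\tau(s(f))\bigr)^{-1}.
\]
The identity $\tau\circ\psi = \id_Q$ forces $\tau(\phi(f)) = \id_Q$, so $\phi(f) \in \ker\tau = H$, and since $\psi$ takes values in $\pi_1(M) = \ker\pi$ we retain $\pi\phi(f) = f$. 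The map $\phi$ is bornologous because $s$, $\tau$, and $\psi$ are.

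The main obstacle is therefore to produce the bornologous section $s\colon \Homeo_0(M) \to G$ of $\pi$. The naive fragmentation-based candidate, sending $f$ to the product of canonical ball lifts of a minimal fragmentation, would land in $H$, making $\tau\circ s$ identically trivial and neutralising the correction by $\psi$; so $s$ must be built to take values genuinely outside $H$, with $\tau\circ s$ recording bornologously the winding component of the chosen lift. The hypothesis on $\psi$ is essential exactly here: via the coarse equivalence $\pi_1(M) \sim A \times Q$ supplied by $\psi$, one can control the $A$-valued discrepancy between two lifts of a given $f$ through its bornologously-recorded $Q$-component, yielding an $s$ whose winding is bornologous in $Q$ and whose remaining $A$-ambiguity is then absorbed by the correction $\psi(\tau(s(f)))^{-1}$. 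Once such an $s$ is in hand, the formula above produces $\phi$, and Proposition \ref{commuting subgroups} gives the quasi-isometry $G \sim \pi_1(M) \times \Homeo_0(M)$.
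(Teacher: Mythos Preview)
Your strategy—apply Proposition \ref{commuting subgroups}, and reduce the bornologous section $\phi\colon \Homeo_0(M) \to H$ to a bornologous section $s\colon \Homeo_0(M) \to G$ followed by the correction $\phi(f) = s(f)\,\psi(\tau s(f))^{-1}$—is sound in outline, but the proof has a genuine gap: you never construct $s$. The final paragraph names this as ``the main obstacle'' and then only gestures at a solution; phrases like ``one can control the $A$-valued discrepancy'' do not define $s$, and since producing such a section is exactly the content of the theorem, this cannot be deferred. You have also mislocated where $\psi$ enters. A bornologous $s\colon \Homeo_0(M) \to G$ exists with \emph{no} hypothesis on $\psi$: fix a compact fundamental domain $D$ for $\pi_1(M)\curvearrowright\tilde M$ and $x_0\in D$, and let $s(f)$ be the unique lift of $f$ in $G$ with $s(f)(x_0)\in D$; proper discontinuity of the deck action shows this is bornologous by the same estimate the paper uses. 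Thus $\psi$ is needed only in the correction step. Even there, your one-line justification ``$\phi$ is bornologous because $s$, $\tau$, and $\psi$ are'' is insufficient: pointwise products of bornologous maps into a non-abelian target need not be bornologous, and one must actually compute $\phi(f)\phi(g)^{-1}$ and use that $\gamma_f\psi(q_f)^{-1}\in A$ is central to see the $\pi_1(M)$-factors collapse to something controlled by $\psi(q_g)\psi(q_f)^{-1}$.

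The paper's argument is more direct. Rather than building $s$ into $G$ and correcting, it uses $\psi$ from the outset: with $S=\psi(\pi_1(M)/A)$, the set $S^{-1}D$ is a fundamental domain for $A\curvearrowright\tilde M$, and $\phi(g)$ is defined as the unique lift in $H$ sending $x_0$ into $S^{-1}D$. This fuses your two steps into one construction and makes the role of $\psi$ transparent—it is precisely what lets one choose a fundamental domain for $A$ so that the resulting section is bornologous. Your part (a) also needs repair: you invoke a ``fragmentation-type maximal metric on $G$'' before $G$ is known to be (OB) generated; the paper instead constructs an explicit compatible right-invariant metric $\partial$ on $G$ from $d_\infty$ on $H$ and the orbit metric on $\pi_1(M)$, and verifies coarse embedding of $\pi_1(M)$ directly for that metric.
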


\begin{proof}
We first show that $\pi_1(M)$ is coarsely embedded in $G$ and then construct a bornologous section $\phi\colon {\rm Homeo}_0(M)\til H$. 
Fix a proper metric $d$ on $\tilde M$ invariant under the action of $\pi_1(M)$ by deck transformations and let $D\subseteq \tilde M$ be a relatively compact fundamental domain for this action.

Then, since $H$ commutes with $\pi_1(M)$ and $\pi_1(M)$ acts by isometries, we see that, for $g,f\in H$, 
$$
\sup_{x\in \tilde M}d(g(x),f(x))= \sup_{x\in D}\sup_{\gamma\in \pi_1(M)}d(g\gamma(x),f\gamma(x))=\sup_{x\in D}d(g(x),f(x))<\infty,
$$
so $d_\infty(g,f):=\sup_{x\in \tilde M}d(g(x),f(x))$ defines a compatible right-invariant metric on $H$.

Fix also a point $x_0\in D$ and note that, since $\pi_1(M)$ acts properly discontinuously on $\tilde M$ by isometries,  
$$
d_{\pi_1(M)}\big(\gamma, \tau)=d(\gamma\inv (x_0),\tau\inv(x_0)\big)
$$ 
defines a proper right-invariant metric on $\pi_1(M)$, i.e. a metric whose bounded sets are finite. 

We now define a right-invariant metric on $G$ by setting
$$
\partial(g\gamma,f\tau)=\inf_{a\in A}\;\;d_\infty(g,fa)+d_{\pi_1(M)}(\gamma, \tau a\inv)
$$
for all $g,f\in H$ and $\gamma, \tau\in \pi_1(M)$. To see that $\partial$ is a \emph{compatible} metric, note that $G$ is isomorphic to the quotient of $H\times \pi_1(M)$ by the central subgroup 
$$
N=\{(a,a\inv)\in H\times \pi_1(M)\del a\in A\}
$$ 
and $\partial$ is simply the Hausdorff metric on the quotient $\frac{H\times \pi_1(M)}N$ induced by the metric $d_\infty+d_{\pi_1(M)}$ on $H\times \pi_1(M)$.

To verify that $\pi_1(M)$ is coarsely embedded in $G$, we need to show that, if  $\gamma_n\in \pi_1(M)$ is a sequence with $\partial(\gamma_n,1)$ bounded, then the sequence $d_{\pi_1(M)}(\gamma_n, 1)$ is also bounded. Indeed, if $\partial(\gamma_n,1)$ is bounded, then by definition of $\partial$, there are $a_n\in A$ so that both $d_\infty(1,a_n)$ and $d_{\pi_1(M)}(\gamma_n, a_n\inv)$ are bounded, whereby, as
$$
d_{\pi_1(M)}(1,a_n\inv)=d(x_0,a_n(x_0))\leqslant d_\infty(1,a_n),
$$ 
the sequence $d_{\pi_1(M)}(\gamma_n, 1)$ is also bounded.

Let now $S=\psi(\pi_1(M)/A)$, which is a transversal for $A$ in $\pi_1(M)$.  By Proposition \ref{commuting subgroups} (or Gersten's original theorem) applied to $A \to \pi_1(M) \to \pi_1(M)/A$, we conclude that $A\times \pi_1(M)/A$ is coarsely equivalent with $\pi_1(M)$ via the map $(a,k)\mapsto a\psi(k)$. In particular, writing every $\gamma\in \pi_1(M)$ in its unique form $\gamma=as$ with $a\in A$ and $s\in S$, we see that the map $\gamma=as\in \pi_1(M)\mapsto a\in A$ is bornologous. Therefore, for every constant $C$, there is a finite set $F\subseteq A$ so that, for all $a,b\in A$ and $s,t\in S$,
$$
d_{\pi_1(M)}(as,bt)\leqslant C\;\;\;\Rightarrow\;\;\;  ab\inv \in F.
$$

Since $S\inv$ is also a transversal for $A$ in $\pi_1(M)$, the set $S\inv D$ is a fundamental domain for the action  $A\curvearrowright \tilde M$. In particular, every $g\in \Homeo_0(M)$ has a unique lift $\phi(g)\in H$ so that $\phi(g)x_0\in S\inv D$.   This is our definition of the section $\phi$.

We now show that $\phi$ is bornologous. For this, suppose that $B\subseteq \Homeo_0(M)$ is relatively (OB).  Define $B^\phi=\{\phi(g)\phi(f)\inv \in H\del gf\inv \in B\}$.  We must show that $B^\phi$ is relatively (OB) in $H$.
To do this, let $V$ be an arbitrary identity neighbourhood in $H$. Without loss of generality, we may suppose that 
$$
V=\{g\in H\del d_\infty(g,1)<\eps\}
$$
for some $\eps>0$. Then $\pi[V]$ is an identity neighbourhood in $\Homeo_0(M)$ and thus, as $\Homeo_0(M)$ is connected, there is $m$ so that $B\subseteq \pi[V]^m=\pi[V^m]$. Let $F\subseteq A$ be the finite subset chosen as a function of $C=m\eps+2{\rm diam}(D)$ as above. 

Assume that $gf\inv \in B$. Since $\phi(g)\phi(f)\inv$ is a lift of $gf\inv$, there is some $a\in A$ so that $a\phi(g)\phi(f)\inv\in V^m$ and so 
$$
d(a\phi(g)x_0, \phi(f)x_0)\leqslant d_\infty(a\phi(g), \phi(f))< m\eps.
$$
Writing $\phi(g)x_0=s\inv x$ and $\phi(f)x_0=t\inv y$ for some $x,y\in D$ and $s,t\in S$, we have
\[\begin{split}
d_{\pi_1(M)}(a\inv s,t)
&= d(as\inv x_0,t\inv x_0)\\
&\leqslant  d(as\inv x_0,as\inv x)+d(as\inv x, t\inv y)+d(t\inv y, t\inv x_0)\\
&=  d(x_0,x)+d(a\phi(g)x_0,\phi(f)x_0)+d(y, x_0)\\
&<m\eps+2{\rm diam}(D),
\end{split}\]
whence $a\inv \in F$ and hence $\phi(g)\phi(f)\inv\in FV^m$. So $B^\phi\subseteq FV^m$, showing that $B^\phi$ is relatively (OB) in $H$.

Proposition \ref{commuting subgroups} now implies that $G$ is coarsely equivalent to $\Homeo_0(M) \times \pi_1(M)$. As $\Homeo_0(M)$ and $\pi_1(M)$ are both (OB) generated, so is $G$ and the coarse equivalence is in fact a quasi-isometry.
\end{proof}

The following examples illustrate two extreme cases in which Theorem \ref{A section} immediately applies.  

\begin{exa}[Case where $A = \pi_1(M)$]  \label{torus example}
Let $n \geqslant 1$, and let $\Homeo_{\Z^n}(\R^n)$ be the group of orientation preserving homeomorphisms of $\R^n$ that commute with integral translations.  As each deck transformation can be realized as a lift of a path of rotations of the torus, we have  $A = \pi_1(M)$ and so $\pi_1(M)/A$ is trivial.  Thus, Theorem \ref{A section} implies that
$\Homeo_{\Z^n}(\R^n)$ is quasi-isometric to $\Homeo_0(\T^n) \times \Z^n$.  
\end{exa}

Since $\Homeo_0(\T)$ has property (OB) (by \cite{OB}, or Proposition \ref{Sn OB prop}) and thus is quasi-isometric to a point, we immediately have the following corollary.
\begin{cor} \label{Z cor}
$\Homeo_{\Z}(\R)$ is quasi-isometric to $\Z$.
\end{cor}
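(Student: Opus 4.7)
The plan is to simply combine the $n=1$ instance of Example \ref{torus example} with the triviality of the quasi-isometry type of $\Homeo_0(\S^1)$. Specifically, Example \ref{torus example} (with $n=1$) gives, via an application of Theorem \ref{A section}, that
\[
\Homeo_\Z(\R) \text{ is quasi-isometric to } \Homeo_0(\T) \times \Z,
\]
so it suffices to collapse the first factor.

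To collapse that factor, I would invoke Proposition \ref{Sn OB prop} applied to $\S^1 = \T$, which asserts that $\Homeo_0(\S^1)$ has property (OB). By the general discussion in Section \ref{prelim sec}, a group with property (OB) has finite diameter with respect to every compatible right-invariant metric, hence is quasi-isometric to a point. Therefore the projection $\Homeo_0(\T) \times \Z \to \Z$ onto the second coordinate is a quasi-isometry.

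Composing the two quasi-isometries yields that $\Homeo_\Z(\R)$ is quasi-isometric to $\Z$, as required. There is no real obstacle here, as both inputs are established earlier in the paper; the only thing to verify is that the two quasi-isometries compose properly, which is immediate from the fact that quasi-isometry is an equivalence relation on pointed metric spaces up to the usual constants.
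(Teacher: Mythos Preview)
Your proposal is correct and follows essentially the same argument as the paper: the paper also deduces the corollary directly from Example \ref{torus example} (the $n=1$ case) together with the fact that $\Homeo_0(\T)=\Homeo_0(\S^1)$ has property (OB) by Proposition \ref{Sn OB prop}, hence is quasi-isometric to a point.
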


\begin{exa}[Case where $A$ is trivial] \label{hyperbolic ex}
Let $M$ be a compact manifold admitting a metric of strict negative curvature.  Then $\pi_1(M)$ is centerless, so $A = 1$ and $\Homeo_0(M)$ is isomorphic to the identity component $G_0$ of $G$. Thus, $\Homeo_0(M) \times \pi_1(M)$ is isomorphic to $G_0 \times \pi_1(M) = G$.  
One can also see this directly as follows.  That $\tilde{M}$ has curvature bounded by $k<0$ implies that
$$ \sup \limits_{x \in \tilde{M}} \tilde{d}(x, \gamma x) = \infty$$
for any $1\neq \gamma \in \pi_1(M)$.  
But if $f_t$ is a path in $\Homeo_0(M)$ with lift $\tilde{f}_t$ based at $\id$ in $G$, then $\tilde d(\tilde{f}_1(x), x) < \infty$, so $f \mapsto \tilde{f}_1$ gives a well-defined, continuous, surjective lift $\Homeo_0(M) \to G_0$. 
\end{exa}


\subsection{Bounded cohomology.}
We briefly discuss the relationship between bounded cohomology, quasimorphisms, and central extensions.  
Recall that, if C is a group, each central extension $0 \to A \to B \to C \to 1$ defines an element of the (discrete) group cohomology $H^2(C; A)$, this class is \emph{bounded} if it admits a cocycle representative $C^2 \to A$ with bounded image.  

It is well known that the extension
$$
0 \to \Z \to \Homeo_{\Z}(\R) \to \Homeo_0(\T) \to 1
$$
represents a bounded second cohomology class; this is a direct consequence of the Milnor-Wood inequality.   

We will show, by contrast, that 
$$
0 \to \Z^n \to \Homeo_{\Z^n}(\R^n) \to \Homeo_0(\T^n) \to 1
$$
does not represent a bounded element of $H^2\big( \Homeo_0(\T^n); \Z^n\big)$ for any $n>1$.   Although this can be demonstrated directly in the language of group cohomology, we choose to take this opportunity to introduce the notion of \emph{quasimorphism} in the general context of topological groups.  

The standard definition of a quasimorphism on a group $H$ is a function $\phi\colon H \to \R$ such that $\{ \phi(a) + \phi(b) - \phi(ab) \mid a,b \in H \}$ is a bounded set.  This generalizes quite naturally to functions  $\phi\colon H\til G$ with image in an arbitrary topological group $G$. For this, say that  two maps $\phi, \psi\colon X\til (Y,\ku E)$ from a set $X$ with values in a coarse space $(Y,\ku E)$ are  {\em close} if there is some entourage $E\in \ku E$ so that $(\phi(x),\psi(x))\in E$ for all $x\in X$.
\begin{defi}
A map $\phi\colon H\til G$ from a group $H$ to a topological group $G$ is a {\em quasimorphism} if  the two maps $(a,b)\mapsto \phi(ab)$ and $(a,b)\mapsto \phi(a)\phi(b)$ are close with respect to the coarse structure on $G$. 
\end{defi}

When working with the right-invariant coarse structure on $G$, this simply means that the symmetric set  
$$
D=\{  \phi(a) \phi(b)\phi(ab)^{-1} \mid a,b \in H \}^\pm\cup\{1\}
$$
is relatively (OB) in $G$. Observe that then $\phi(1)=\phi(1)\phi(1)\phi(1)\inv\in D$ and so $\phi(a\inv)\in \phi(a)\inv D\phi(aa\inv)=\phi(a)\inv D\phi(1)\subseteq \phi(a)\inv D^2$ for all $a\in H$. Therefore, if $D$ is contained in the center $Z(G)$ of $G$, we find that, for all $h_1,\ldots, h_k\in H$ and $\alpha_1, \ldots, \alpha_k\in \{-1,1\}$, 
\[\begin{split}
\phi(h_1^{\alpha_1}h_2^{\alpha_2}\cdots h_k^{\alpha_k})
&\in \phi(h_1^{\alpha_1})\phi(h_2^{\alpha_2}\cdots h_k^{\alpha_k})D\\
&\subseteq \ldots\\
&\subseteq \phi(h_1^{\alpha_1})\phi(h_2^{\alpha_2})\cdots \phi(h_k^{\alpha_k})D^{k-1}\\
&\subseteq \phi(h_1)^{\alpha_1}D^2\phi(h_2)^{\alpha_2}D^2\cdots \phi(h_k)^{\alpha_k}D^2D^{k-1}\\
&= \phi(h_1)^{\alpha_1}\phi(h_2)^{\alpha_2}\cdots \phi(h_k)^{\alpha_k}D^{3k-1}.
\end{split}\]

If $0\til \Z^n\til G\til H\til 1$ is a central extension of a finitely generated group $H$ by $\Z^n$,  the extension represents a bounded cohomology class in $H^2(H;\Z^n)$ exactly when the quotient map $\pi\colon G\til H$ has a section $\phi\colon H\til G$ that is a quasimorphism, equivalently, if $D$ as defined above is a finite subset of $\Z^n$.

The following result of A. M. Gersten can now be deduced from Proposition \ref{commuting subgroups} by showing that quasimorphisms between countable discrete groups are bornologous.
\begin{thm}[A. M. Gersten \cite{Gersten}] \label{gersten thm}
If a central extension $0 \to \Z^n \to G \to H \to 1$ of a finitely generated group $H$ represents a bounded cohomology class, then $G$ is quasi-isometric to $H \times \Z^n$.  
\end{thm}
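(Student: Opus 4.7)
The plan is to deduce this from Proposition \ref{commuting subgroups} applied to the central extension itself. I take $K=\Z^n$ to be the central subgroup and take the other commuting subgroup appearing in Proposition \ref{commuting subgroups} to be $G$ itself; this is legitimate since $[K,G]=1$ by centrality and $G$ is trivially generated by $K$ and $G$. The output of the proposition will be a coarse equivalence between $\Z^n\times H$ and $G$, and since both sides are (OB) generated (in fact finitely generated), this coarse equivalence will automatically upgrade to a quasi-isometry, exactly as in the concluding line of the proof of Theorem \ref{A section}. Two hypotheses must be verified: that $\Z^n$ is coarsely embedded in $G$, and that the quotient $\pi\colon G\til H$ admits a bornologous section.

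Coarse embedding of $\Z^n$ comes for free: $G$ is finitely generated (being an extension of finitely generated groups by finitely generated $\Z^n$), so balls in its word metric are finite. Hence any subset of $\Z^n$ that is bounded in $G$ is finite and therefore bounded in $\Z^n$. For the bornologous section, the bounded cohomology hypothesis is, by definition, the existence of a set-theoretic section $\phi\colon H\til G$ of $\pi$ whose defect set
$$
D=\{\phi(a)\phi(b)\phi(ab)\inv\mid a,b\in H\}^\pm\cup\{1\}
$$
is a finite subset of $\Z^n$; that is, $\phi$ is a quasimorphism in the sense of the excerpt with defect contained in the central subgroup $\Z^n$.

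The crux, as flagged in the paper, is to show that any such $\phi$ is bornologous for the word metrics. Fix a finite generating set $h_1,\ldots,h_m$ of $H$, and given $a,b\in H$ with $d_H(a,b)=R$, write $ab\inv=h_{i_1}^{\alpha_1}\cdots h_{i_R}^{\alpha_R}$. The telescoping identity displayed in the excerpt (which used exactly centrality of $D$) gives
$$
\phi(ab\inv)\in \phi(h_{i_1})^{\alpha_1}\cdots \phi(h_{i_R})^{\alpha_R}\, D^{3R-1}.
$$
Since both $D$ and $\{\phi(h_i)^{\pm 1}\}_{i=1}^m$ are finite, this bounds the $G$-word length of $\phi(ab\inv)$ by a linear function of $R$. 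A single further application of the quasimorphism property, combined with centrality of $D$, yields $\phi(a)\phi(b)\inv\in \phi(ab\inv)D$, whence $d_G(\phi(a),\phi(b))\leqslant CR+O(1)$. Thus $\phi$ is bornologous, and Proposition \ref{commuting subgroups} finishes the proof.

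The main obstacle is precisely this bornologous step; the rest is bookkeeping. Note that centrality of $\Z^n$ enters in two essential ways: it allows me to take the second commuting subgroup in Proposition \ref{commuting subgroups} to be all of $G$, and it is what permits the identity $\phi(a)\phi(b)\inv\in \phi(ab\inv)D$ used above. The argument is therefore rather specific to central extensions, which matches the scope of Gersten's original theorem.
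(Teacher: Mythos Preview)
Your proposal is correct and follows exactly the route the paper indicates: the paper's proof is only the one-line remark that Gersten's theorem ``can now be deduced from Proposition \ref{commuting subgroups} by showing that quasimorphisms between countable discrete groups are bornologous,'' and you have supplied precisely those details, using the telescoping identity from the preceding paragraph to get the bornologous bound and then invoking Proposition \ref{commuting subgroups} with $K=\Z^n$ and the second commuting subgroup equal to all of $G$.
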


By contrast, not all quasimorphisms, or even homomorphisms, between topological groups are bornologous. For example, a linear map between Banach spaces is bornologous if and only if it is bounded, i.e., continuous. However, one may show that a Baire measurable quasimorphism between Polish groups is bornologous \cite{large scale}.

The following converse to Gersten's theorem appears to be open, although a related result for $L^\infty$-cohomology has been proved in \cite{KL}.  

\begin{quest} \label{converse q}
Let $H$ be a finitely generated group and $0 \to \Z^n \to G \overset{\pi} \longrightarrow  H \to 1$ a central extension.  Suppose $\phi\colon H \times \Z^n \to G$ is a quasi-isometry such that $\pi \phi( h,x)$ is a bounded distance from $h$.   Does $0 \to \Z^n \to G \overset{\pi} \longrightarrow  H \to 1$ represent a bounded cohomology class?  
\end{quest}

Though we do not know the answer to Question \ref{converse q}, and suspect that it may be positive, we will show that the answer is \emph{negative} for the central extensions of (OB) generated groups
$$
0\til \Z^n\to  \Homeo_{\Z^n}(\R^n)\overset{\pi}\longrightarrow \Homeo_0(\T^n) \to 1,
$$
for all $n\geqslant 2$.
By Example \ref{torus example}, there exists a bornologous section $\phi$ of the quotient map $\pi$, and $(k, x)\mapsto k\phi(x)$ is a quasi-isometry between the product $\Z^n\times  \Homeo_0(\T^n)$ and $\Homeo_{\Z^n}(\R^n)$. Hence, to give a negative answer, it suffices to prove the following.  

\begin{thm}\label{QI no QM}
For $n\geqslant 2$, the central extension does not represent a bounded cohomology class in $H^2\big({\rm Homeo}_0(\T^n); \Z^n\big)$, i.e., $\pi$ has no section that is a quasimorphism.
\end{thm}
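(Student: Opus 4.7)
The plan is to suppose for contradiction that $\phi$ is a quasimorphism section of $\pi$, and to use rotation set rigidity in dimension $n \geq 2$ to derive an impossibility.

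\smallskip
\noindent\textbf{Step 1 (homogenized quasimorphism on $G$).} Suppose the cocycle $c(f,g) := \phi(f)\phi(g)\phi(fg)^{-1} \in \Z^n$ is uniformly bounded. Each element $g \in G := \Homeo_{\Z^n}(\R^n)$ has a unique expression $g = T_{q(g)}\phi(\pi(g))$ with $q(g) \in \Z^n$; the relation
\[
q(g_1 g_2) = q(g_1) + q(g_2) + c(\pi(g_1),\pi(g_2))
\]
shows $q: G \to \Z^n \subseteq \R^n$ is a quasimorphism with $q|_{\Z^n} = \id$. For each coordinate $i$, the componentwise homogenization $\bar q_i(g) := \lim_{k \to \infty} q_i(g^k)/k$ defines a homogeneous quasimorphism $G \to \R$ extending the $i$-th coordinate projection on $\Z^n$. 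As a homogeneous quasimorphism, each $\bar q_i$ is invariant under conjugation in $G$ and satisfies $\bar q_i(gT_v) = \bar q_i(g) + v_i$ for $v \in \Z^n$.

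\smallskip
\noindent\textbf{Step 2 (reduction to $n = 2$).} Via the coordinate inclusion $\Homeo_{\Z^2}(\R^2) \hookrightarrow \Homeo_{\Z^n}(\R^n)$ (product with the identity on the last $n-2$ factors), any homogeneous quasimorphism with the above properties restricts to one on $\Homeo_{\Z^2}(\R^2)$ with the analogous properties. So it suffices to rule out such a $\bar q_1$ in dimension two.

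\smallskip
\noindent\textbf{Step 3 (the push element and the symmetry conjugacy).} Let $\chi(y) := \cos^2(\pi y)$, a continuous $\Z$-periodic function satisfying $\chi(y) + \chi(y-1/2) \equiv 1$. Define the push $\tilde f(x,y) := (x + \chi(y), y)$, an element of $\Homeo_{\Z^2}(\R^2)$ whose rotation set is the nontrivial segment $[0,1] \times \{0\}$. Using $\chi(y - 1/2) = 1 - \chi(y)$, a direct computation gives
\[
T_{(0,1/2)} \tilde f^{-1} T_{(0,-1/2)}(x,y) \;=\; (x + \chi(y) - 1, y) \;=\; \tilde f \cdot T_{(-1,0)}(x,y).
\]
Applying $\bar q_1$ and combining conjugation invariance, homogeneity, and central additivity yields $-\bar q_1(\tilde f) = \bar q_1(\tilde f) - 1$, forcing $\bar q_1(\tilde f) = 1/2$.

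\smallskip
\noindent\textbf{Step 4 (the main obstacle: completing the contradiction).} The remaining task is to exhibit a second algebraic relation that pins $\bar q_1(\tilde f)$ — or the value of $\bar q_1$ on a related element — to a number incompatible with the conclusion of Step 3. The intended approach is to pair $\tilde f$ with a transverse push $\tilde g(x,y) := (x, y + \chi(x))$, for which a parallel computation gives $\bar q_2(\tilde g) = 1/2$ and $\bar q_1(\tilde g) = 0$, and to analyze the non-commuting products $\tilde f \tilde g$ and $\tilde g \tilde f$, their iterates, and their conjugates under suitable translations of $\R^2$. The principle, available only in dimension $n \geq 2$ where rotation sets can be genuine convex bodies of positive diameter, is that the rotation set of $\tilde f \tilde g$ cannot be linearly reconciled with the rotation sets of $\tilde f$ and $\tilde g$ in the manner required by an additive-up-to-bounded-defect invariant; this rigidity should produce two inconsistent constraints on $\bar q_1$. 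The main difficulty, and what makes the dimension hypothesis essential, is identifying the explicit pair of relations — in the one-dimensional case, rotation number provides a canonical consistent extension, and this is precisely what must fail here.
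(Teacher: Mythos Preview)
Your proposal has a genuine gap: Step 4 is explicitly incomplete. You have correctly derived $\bar q_1(\tilde f) = 1/2$ in Step 3, but this value is perfectly consistent --- it is not a contradiction by itself --- and you openly state that ``the main difficulty\ldots\ is identifying the explicit pair of relations'' needed to finish. Without that second constraint, nothing has been proved; the rotation-set heuristic you sketch is suggestive but does not produce a concrete relation, and there is no reason to expect that the products $\tilde f\tilde g$ and $\tilde g\tilde f$ will pin $\bar q_1(\tilde f)$ to anything other than $1/2$.

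The paper's proof takes a different and more direct route. Rather than homogenizing and searching for incompatible constraints on a single element, it works with commutators: for any section $\psi$, the lifted commutator $[\psi(a),\psi(b)]$ is independent of $\psi$, and a quasimorphism section would force the set $\{[\psi(a),\psi(b)][\psi(c),\psi(d)] \in \Z^n : [a,b][c,d]=1\}$ to be finite. The contradiction comes from an explicit construction (communicated by Polterovich): choose $a,b \in \Homeo_0(\T^2)$ so that $[\psi(a),\psi(b)]$ is the half-translation by $(1/2,0)$, and set $c = AaA^{-1}$, $d = AbA^{-1}$ for a matrix $A \in \SL_2(\Z)$ acting linearly on $\T^2$. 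Since $A$ normalizes $\Z^2$ without centralizing it, $[\psi(c),\psi(d)]$ is the $A$-image of the half-translation, and choosing $A$ with large entries makes $[\psi(a),\psi(b)][\psi(c),\psi(d)]$ an arbitrarily large integer translation while $[a,b][c,d]=1$. This $\SL_n(\Z)$-conjugation --- available only for $n \geqslant 2$ --- is precisely the mechanism distinguishing the higher-dimensional case from $n=1$, and it is the missing ingredient in your argument. If you want to rescue your homogenized-quasimorphism framework, it is this normalizer action, not rotation-set convexity, that you should exploit.
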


\begin{proof}
To show that $\pi$ has no section which is a quasimorphism, it will suffice to study sections over commutators.  There is a well known relationship between the set $D$ and values of $\R$-valued quasimorphisms on (products of) commutators originally due to C. Bavard \cite{Bavard}.  As a special case, it is not hard to show if $f\colon  G \to \R$ is a quasimorphism, then the value of $f$ on any commutator is uniformly bounded in terms of $\sup(D)$.   We adapt this line of reasoning to our discussion of sections  of $\Homeo_{\Z^n}(\R^n)\overset{\pi}\longrightarrow \Homeo_0(\T^n)$.

First note that, if $\phi$ and $\psi$ are two sections of $\pi$, then 
$[\phi(a),\phi(b)]=[\psi(a),\psi(b)]$ for all $a,b\in \Homeo_0(\T^n)$. Indeed, given $a,b$, there are $k_a,k_b\in \Z^n$ so that $\psi(a)=\phi(a)k_a$ and $\psi(b)=\phi(b)k_b$. Since $\Z^n$ is central, it follows that
\[\begin{split}
[\psi(a),\psi(b)]
&=\psi(a)\psi(b)\psi(a)\inv \psi(b)\inv\\
&=\phi(a)k_a\phi(b)k_bk_a\inv \phi(a)\inv k_b\inv\phi(b)\inv\\
&=[\phi(a),\phi(b)].
\end{split}\]
Suppose for contradiction that $\phi$ is a section of $\pi$ that is a quasimorphism, i.e., so that 
$$
D=\{  \phi(a) \phi(b)\phi(ab)^{-1} \mid a,b \in \Homeo_0(\T^n) \}^\pm\cup\{1\}
$$ 
is a finite subset of $\Z^n$. Then, $\phi([a,b][c,d])\in [\phi(a),\phi(b)][\phi(c),\phi(d)]D^{23}$ or, equivalently, 
$$
[\phi(a),\phi(b)][\phi(c),\phi(d)]\in \phi([a,b][c,d])D^{23}
$$
for all $a,b,c,d$. In particular, this implies that, for any section $\psi$ of $\pi$, the set
$$
F=\big\{[\psi(a),\psi(b)][\psi(c),\psi(d)]\in \Z^n\mid a,b,c,d\in \Homeo_0(\T^n) \;\&\; [a,b][c,d]=1\big\}
$$
is finite and independent of the choice of $\psi$.

We can now finish the proof of Theorem \ref{QI no QM} using the following example communicated to the first author by Leonid Polterovich.
\begin{exa} \label{Polterovich ex}
For every $N > 0$, there are $a, b, c, d \in \Homeo_0(\T^n)$ with $[a,b][c,d] =1$ so that, for any section $\psi$ of $\pi$, the product 
$$
[\psi(a),\psi(b)][\psi(c),\psi(d)]
$$
has word norm greater than $N$ with respect to the standard generators of $\Z^n$. 

To simplify notation, we will assume that $n=2$; the general case is analogous. 
Let $N >0$   be given.  Since by \cite{EHN} every element of ${\rm Homeo}_+(\T)$ and hence also of ${\rm Homeo}_+(\T)\times {\rm Homeo}_+(\T)$ is a commutator, we can  find
$a, b \in \Homeo_0(\T^2)$ such that $[\psi(a), \psi(b)](x,y) = (x+1/2, y)$ for any choice of $\psi$. 
Let $A \in \SL_2(\Z)$ be the matrix 
$$
A = \left( \begin{smallmatrix} 2N + 1& 1\\ 2N & 1 \end{smallmatrix} \right).
$$  
The linear action of $A$ on $\R^2$ normalizes the set of integral translations, so descends to a homeomorphism of $\T^2$.  Moreover, for any $f \in \Homeo_0(\T^2)$, the induced homomorphism $A f A^{-1}$ of $\T^2$ is also in the identity component $\Homeo_0(\T^2)$.  
Define $c = AaA^{-1}$ and $d = AbA^{-1}$.   Since we are free to re-define $\psi$ if needed, we may assume that $\psi(c)= A\psi(a)A^{-1}$ and $\psi(d) = A \psi(d)A^{-1}$, as these project to $a,b,c$ and $d$ in $\Homeo_0(\T^2)$ respectively.  To be consistent with standard notation, we now write points of $\R^2$ as column vectors to compute:
$$
[\phi(c), \phi(d)]\left( \begin{smallmatrix} x \\ y \end{smallmatrix}  \right) = A[\phi(a), \phi(b)]A^{-1}\left( \begin{smallmatrix} x \\ y \end{smallmatrix}  \right)= A  \big( A^{-1}\left( \begin{smallmatrix} x \\ y \end{smallmatrix}  \right) + \left( \begin{smallmatrix} 1/2 \\ 0 \end{smallmatrix}  \right)  \big) = \left( \begin{smallmatrix} x + N+1/2 \\ y + N  \end{smallmatrix}  \right).
$$
Thus,
$$
[\phi(a), \phi(b)][\phi(c), \phi(d)]\left( \begin{smallmatrix} x \\ y \end{smallmatrix}  \right) = \left( \begin{smallmatrix} x + N +1 \\ y + N  \end{smallmatrix}  \right)
$$
showing that $[\phi(a), \phi(b)][\phi(c), \phi(d)]$ has norm greater than $N$, as desired.   Moreover, since 
$$
[\phi(a), \phi(b)][\phi(c), \phi(d)]
$$ 
is an integral translation, we have
$$
[a,b][c,d] = \pi([\phi(a), \phi(b)][\phi(c), \phi(d)]) = 1\in \Homeo_0(\T^2).
$$
\end{exa}
\end{proof}

\begin{rem} Example \ref{Polterovich ex} can easily be adapted to  also provide  an unbounded class in $H^2(\Homeo_0(\T^n), \Z)$. Indeed, this is represented by the central extension of $\Homeo_0(\T^n)$ by  the group of isotopically trivial homeomorphisms of $\R \times \T^{n-1}$ that commute with integer translation of the $\R$ factor.
\end{rem}

\section{Examples of quasi-isometric embeddings}  \label{embeddings sec}

As a means of further describing the large scale geometry of groups of homeomorphisms, we give examples of  quasi-isometric, isomorphic embeddings between them.  Since $\Homeo_\Z(\R)$ is quasi-isomorphic to $\Z$ (by Corollary \ref{Z cor}), it has the simplest nontrivial quasi-isometry type among all non locally compact groups of homeomorphisms.   Thus, a natural starting point, and our focus here, is producing examples of embeddings of this group into others. 
 
It is easy to construct an injective homomorphism $\Homeo_\Z(\R) \to \Homeo_0(M)$, for any manifold $M$: start with an embedded copy of $\S^{n-1} \times [-1,1]$ in $M$, and define an action of $\Homeo_\Z(\R)$ supported on the interior of this set (which we identity with $\S^{n-1} \times \R$) by $\phi(f) (x, t) = (x, f(t))$.  This extends to an action on $M$ that is trivial on the complement of the embedded $\S^{n-1} \times \R$.   However, such a homomorphism is not a quasi-isometric embedding; in fact, using that $\Homeo(\S^{n-1})$ has property (OB), one can easily show that the image of $\phi$ is \emph{bounded} in $\Homeo_0(M)$.

However, with more work, we are able to construct examples of \emph{undistorted} continuous homomorphisms $\Homeo_\Z(\R) \to \Homeo_0(M)$. 

\begin{prop}
Let $M$ be a manifold with $\dim(M) \geqslant 2$ such that $\pi_1(M)$ has an undistorted element of infinite order.  Then there is a continuous, quasi-isometric isomorphic embedding $\Homeo_\Z(\R) \to \Homeo_0(M)$.  
\end{prop}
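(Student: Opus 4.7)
The starting point is the easy construction recalled just before the proposition: a continuous injective homomorphism $\Homeo_\Z(\R)\to\Homeo_0(M)$ that acts on an embedded shell $\S^{n-1}\times[-1,1]$ by $\phi(f)(x,t)=(x,f(t))$ after identifying the interior $(-1,1)$ with $\R$. That homomorphism has bounded image because the ``$\R$-axis'' of the shell has bounded length in the universal cover $\tilde M$. To obtain an undistorted embedding, the axis must instead correspond, in $\tilde M$, to long motion along the lift $\tilde\gamma$ of the undistorted loop $\gamma$.

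Concretely, fix an embedded loop $\gamma\subset M$ representing the undistorted infinite-order element and a lift $\tilde\gamma\subset\tilde M$ parametrized so the deck transformation acts by integer translation. The plan is to embed $\S^{n-1}\times[-1,1]$ into $M$ so that the central arc $\{x_0\}\times(-1,1)$ corresponds in $\tilde M$ (under the identification $(-1,1)\cong\R$) to an unbounded portion of $\tilde\gamma$ --- in particular, the translation $(x,t)\mapsto(x,t+n)$ of the interior should realize a shift by approximately $n$ units along $\tilde\gamma$. Then define $\Phi(f)(x,t)=(x,f(t))$ on the interior of the shell and the identity on the complement; continuity of the extension at the shell boundary is automatic since elements of $\Homeo_\Z(\R)$ fix $\pm\infty$ in the two-point compactification $\R\cup\{\pm\infty\}\cong[-1,1]$. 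Injectivity and the homomorphism property are inherited from the faithful action of $\Homeo_\Z(\R)$ on $\R$.

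For the quasi-isometric lower bound, I would apply Lemma~\ref{militon lemma} to the lift $\tilde\Phi(\tau_n)$ of $\Phi(\tau_n)$, where $\tau_n\in\Z\subset\Homeo_\Z(\R)$ is the integer translation by $n$: this lift shifts the lifted central axis by $n$ and hence moves a suitably chosen point of a fixed compact $D\subset\tilde M$ by a distance of order $n$ along $\tilde\gamma$, which by undistortion of $\gamma$ gives a $\tilde d$-displacement of order $n$ and thus $\partial(\Phi(\tau_n),\id)\geqslant cn-C$. The principal obstacle is precisely realizing the unbounded axis in $\tilde M$: a single compact embedded shell in $M$ has bounded lift, so the naive shell construction only yields bounded displacement, and tapering the action near a tube boundary would destroy the homomorphism property since there is no continuous one-parameter family of endomorphisms of $\Homeo_\Z(\R)$ deforming the identity to the trivial map. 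Overcoming this requires either an equivariant version of the shell construction carried out at the level of $\tilde M$ along a $\gamma$-invariant tube (and descended via a compatible $\pi_1(M)$-action), or a construction where the axis in $M$ is realized by an arc winding many times around $\gamma$ with the winding count appropriately encoded in the shell's $\R$-parameterization. The topological flexibility afforded by $\dim(M)\geqslant 2$ is used to embed such axes without self-intersection.
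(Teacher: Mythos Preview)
Your proposal correctly identifies the central difficulty but does not resolve it, so there is a genuine gap. You recognize that the naive shell action has bounded image and that one needs the integer translations $\tau_n\in\Homeo_\Z(\R)$ to act, in the universal cover, by displacement of order $n$ along the lift of $\gamma$. You also correctly observe the two obstructions: a compact shell lifts to a bounded set, and tapering destroys the homomorphism property. But your two suggested fixes do not actually work as stated. The ``equivariant tube'' idea amounts to letting $f$ act on $\R\times\D^{n-1}$ by $(t,x)\mapsto(f(t),x)$ and descending to $\S^1\times\D^{n-1}$; this \emph{is} a homomorphism and does have the right displacement, but it moves the boundary $\S^1\times\partial\D^{n-1}$ nontrivially and so does not extend by the identity to $M$. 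Any attempt to damp it near the boundary runs into exactly the tapering problem you flagged. The ``winding arc'' idea is not fleshed out enough to evaluate, and it is hard to see how to make it a group homomorphism.

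The paper's proof supplies the missing idea, and it is genuinely different from either of your sketches. One first builds an action of $\Homeo_\Z(\R)$ on the closed annulus $\A=\S^1\times[-1,1]$ via a specific foliation of $\R^2$: vertical lines in the lower half-plane, lines of slope $1$ in the upper half-plane. Each leaf is identified with $\R$, and $f\in\Homeo_\Z(\R)$ acts leafwise. This action commutes with integer horizontal translation, descends to the open annulus, and extends continuously to $\A$ so that it is \emph{trivial} on one boundary circle and the standard $\Homeo_+(\S^1)=\Homeo_\Z(\R)/\Z$ action on the other. That asymmetry is the key: doubling two copies of $\A$ along the nontrivial boundary gives an action on the annulus (and, parametrizing by $\S^{n-2}$, on $\S^1\times\D^{n-1}$) that fixes the \emph{entire} boundary pointwise while the translations $\tau_n$ act by genuine rotations on the central essential circle. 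Embedding this solid tube as a tubular neighbourhood of $\gamma$ then lets Lemma~\ref{militon lemma} and the undistortion of $\gamma$ give the lower bound exactly as you intended. The foliation/doubling trick is what reconciles ``homomorphism'' with ``identity on the boundary'' without tapering; your proposal is missing precisely this mechanism.
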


\begin{proof}
We start with a special case, defining a quasi-isometric, isomorphic embedding from $\Homeo_\Z(\R)$ to the group of orientation-preserving homeomorphisms of the closed annulus $\A: = \S^1 \times [-1,1]$.  This construction will then be modified so as to have image in the group of homeomorphisms of $\A$ that fix the boundary pointwise.   The case for general $M$ is given by a suitable embedding of a family of such annuli in $M$.  

Define a foliation of $\R^2$ by vertical lines on the lower half-plane, and lines of the form $\{ (x, x-a) \}$ on the upper half plane (figure 1, left).   Each leaf is a set $L_a$ of the form 
$$
\{(a, y) \mid y \leqslant 0\} \cup \{(y+a, y) \mid y \geqslant 0\}
$$
and we have homeomorphism $\psi_a: L_a \to \R$ given by 
$\psi_a(x, y) = y+a$, with inverse 
$$\psi_a^{-1}(z) = \left\{ \begin{array}{cc} 
(a, z-a) & \mbox{ if } z-a<0 \\
(z, z-a) & \mbox{ if } z-a \geqslant 0 \end{array} \right. $$  

This identification lets us define an action of $\Homeo_\Z(\R)$ on $\R^2$, preserving each leaf of the foliation.  On each leaf $L_a$, $f \in \Homeo_\Z(\R)$ acts by $\psi_a^{-1} f \psi_a$.    For $f \in \Homeo_\Z(\R)$, let $\hat{f}(x,y)$ denote the homomorphism of $\R^2$ just described.  Explicitly, if $(x,y) \in L_a$, then 
$$\hat{f}(x,y) = \psi_a^{-1} f (y+a) =  \left\{ \begin{array}{cc} 
(a, f(y+a) -a) & \mbox{ if } f(y+a) -a<0 \\
(f(y+a), f(y+a) -a) & \mbox{ if } f(y+a) -a \geqslant 0 \end{array} \right.$$  

One can check directly that this action commutes with integral horizontal translations, i.e. that $\hat{f}(x+1,y) = \hat{f}(x,y) + (1, 0)$, and so descends to an action of $\Homeo_\Z(\R)$ on the open annulus $\R^2 / (x,y) \sim (x+1, y)$.  






We claim that it also extends to a continuous action on the compactification this quotient to the closed annulus $\A$, giving a homomorphism $\phi\colon \Homeo_\Z(\R) \to \Homeo_0(\A)$.  First, define the compactification explicitly by identifying $\R^2 / (x,y) \sim (x+1, y)$ with $\R \times (-1, 1) /  (x,y) \sim (x+1, y)$ via the map 
$\R^2 \to \R \times (-1, 1)$ given by $(x, y) \mapsto (x, \frac{2}{\pi} \arctan(y))$  
and compactify by adding the boundary circles $\S^1 \times \{-1\}$ and $\S^1 \times \{1\}$.  Figure 1 (right) shows the foliation on this annulus; leaves are transverse to the circle $\S^1 \times \{-1\}$ and spiral around the circle $\S^1 \times \{1\}$
Now it is easy to check explicitly that the action of $\Homeo_\Z(\R)$ extends continuously to a trivial action on the $\S^1 \times \{-1\}$ boundary component, and to the standard action of $\Homeo_+(\S^1) = \Homeo_\Z(\R)/\Z$ on the $\S^1 \times \{1\}$ boundary component.    

\begin{figure}
\centering
\includegraphics[width=3.8in]{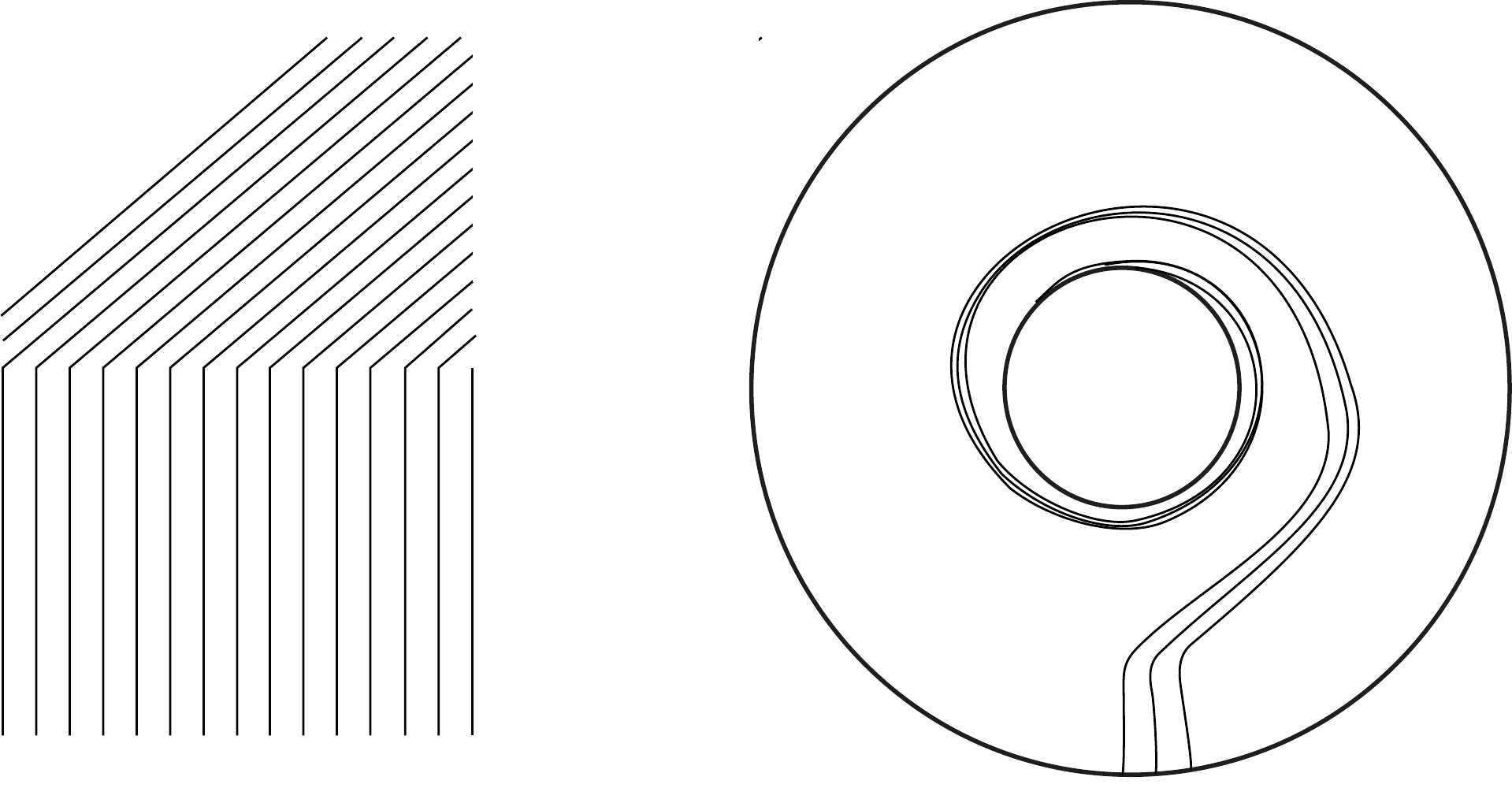} 
\caption{A foliation of the plane and induced foliation on the annulus}
\end{figure}

To show that this is a quasi-isometric embedding,  let $U$ be a symmetric relatively (OB) neighbourhood of the identity in $\Homeo(\A)$ and let $\norm\cdot_U$ denote the corresponding length function on $\Homeo(\A)$. Also, for $n\in \Z$,  let $T_n \in \Homeo_\Z(\R)$ denote translation by $n$.  Since the inclusion $n \mapsto T_n$ of $\Z$ into $\Homeo_\Z(\R)$ is a quasi-isometry between $\Z$ and $\Homeo_\Z(\R)$, it suffices to verify that $ n \leqslant K \norm{\phi(T_n)}_U + K$ for some constant $K$.  

Let $\tilde{T}_n$ be the lift of $\phi(T_n)$ to the universal cover $\tilde{\A} = \R \times [-1, 1]$ of $A$ that fixes $\R \times \{-1\}$ and acts on $\R \times \{1\}$ as translation by $n$.  It follows that $\tilde{T}_n([0,1] \times [-1,1])$ has diameter at least $n$.  Lemma \ref{militon lemma} now gives the desired inequality.  

By gluing two copies of $\A$ together on the $\S^1 \times \{1\}$ boundary component, and ``doubling" the action defined above, we get an action of $\Homeo_\Z(\R)$ on the closed annulus that fixes each boundary component pointwise.  Translations in $\Homeo_\Z(\R)$ act by rotations on the essential $\S^1$ curve corresponding to the identified $\S^1 \times \{1\}$ boundary components, so the same argument as above shows that this is also a quasi-isometric embedding.

This gluing construction easily generalizes to higher dimensions.  Rather than gluing two copies of $\A$ together on the $\S^1 \times \{1\}$ boundary, we can take a family of copies of $\A$ parametrized by $\S^{n-2}$, and identify the $\S^1 \times \{1\}$ boundary component of each annulus in the family with a single copy of $\S^1$.  In this way we get an action of  $\Homeo_\Z(\R)$ on $\S^1 \times \D^{n-1}$ that fixes the boundary pointwise.  As before, translations in $\Homeo_\Z(\R)$ act as point-pushes along an essential $\S^1 \times \{0\} \subset \S^1 \times \D^{n-1}$ curve.  

For the general case, suppose that $M$ is a manifold of dimension $n \geqslant 2$ such that $\pi_1(M)$ has an undistorted infinite order element.   As in the proof of Proposition \ref{C prop 1}, we may find an embedded curve $\gamma \subseteq M$ representing such an element.  Identify $\S^1 \times \D^{n-1}$ with a tubular neighborhood of $\gamma$ so that $\gamma = \S^1 \times \{0\}$.  The construction above now gives an action of $\Homeo_\Z(\R)$ on this tubular neighbourhood and fixing the boundary, so it extends (trivially) to an action of $\Homeo_\Z(\R)$ by homeomorphisms of $M$.  As in Proposition \ref{C prop 1}, the action of $T_n$ ``point-pushes" along $\gamma$, and so the estimate from Lemma \ref{militon lemma} given there shows that this is a quasi-isometric, isomorphic embedding.
\end{proof}


\section{Application to group actions and distorted subgroups}  \label{group actions sec}

It is a basic, although typically very difficult, problem to classify the actions of a given group $G$ on a manifold $M$, i.e. the homomorphisms $G \to \Homeo(M)$.  In the case where $G$ is a finitely generated discrete group, progress on this problem has been made in three separate cases, all under strong additional assumptions.  The first case is the study of actions by diffeomorphisms preserving a volume form on $M$.  In the case where $G$ is assumed to be a lattice in a Lie group, this is the classical \textit{Zimmer program}.  The second tractable case is the very special case of $M = \S^1$, and the third case is centered around the theme of using torsion elements in $G$ to show that $G$ cannot act on a manifold.  See \cite{Fisher} for a survey of results in all three directions.  

Other than these special cases, the problem of classifying (discrete) group actions on manifolds remains wide open.  
For example, the following statement appears in a recent paper of Fisher and Silberman \cite{FS}.
\begin{quote}
\textit{Except for $M = \S^1$, there is no known example of a torsion-free finitely generated group that doesn't act by homeomorphisms on $M$.}
\end{quote}
If $G$ is a locally compact, rather than finitely generated, group, again very little can be said about actions of $G$ by homeomorphisms save for the special cases where $G$ is compact or has many torsion elements.  
\smallskip

We propose that one should instead study homomorphisms $G \to \Homeo_0(M)$ that respect the geometric and topological structure of $G$, namely, the quasi-isometric, isomorphic embeddings.

\begin{prob}  \label{embed problem}
Let $G$ be a compactly generated group, and $M$ a manifold.  Classify the quasi-isometric, isomorphic embeddings $G \to \Homeo_0(M)$.  
\end{prob} 
This problem should be most accessible in the case where $M$ is a compact surface, and, as a first step, one should produce examples of groups with \emph{no} quasi-isometric, isomorphic embeddings into $\Homeo_0(M)$, and/or groups with only very few or very rigid QI isomorphic embeddings.


\subsection{Groups that admit no Q.I. embedding into $\Homeo_0(M)$} 

The goal of this section is to provide evidence that Problem \ref{embed problem} is {\em a)} approachable, and {\em b)} essentially different from the general problem of classifying group actions on manifolds.  To do this, we construct an example of a compactly generated group that embeds continuously in a group of homeomorphisms, but only admits distorted continuous embeddings.   

Let $\Homeo_0(\A)$ denote the identity component of the group of homeomorphisms of the closed annulus.  These homeomorphisms preserve boundary components, but need not pointwise fix the boundary.  Our main proposition is the following.    
 
\begin{prop} \label{no QI}
There exists a torsion-free, compactly generated, closed subgroup $G$ of  $\Homeo_0(\A)$ such that $G$ does not admit any continuous quasi-isometric, isomorphic embedding into $\Homeo_0(\A)$.  
\end{prop}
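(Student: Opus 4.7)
The plan is to mimic the strategy of Theorem \ref{QI no QM}: the large scale geometry of $\Homeo_0(\A)$ is governed by its two boundary rotation-number quasi-morphisms, and one can choose a $G$ whose intrinsic rank refuses to be captured, under any continuous injective homomorphism, by the rotation-number data alone. Dynamical rigidity of commuting annulus homeomorphisms then completes the argument.

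\textbf{Construction of $G$.} Identify $\A = \S^1 \times [-1,1]$. Let $R_t \in \Homeo_0(\A)$ denote the rotation $R_t(x,s) = (x+t, s)$ and let $T$ be a compactly supported radial twist $T(x,s) = (x + \tau(s), s)$, where $\tau\colon [-1,1] \to \R$ is a bump vanishing near $s = \pm 1$ with $\tau(0) = 1$. The generators $\{R_t\}_{t \in \R}$ and $T$ commute and together generate a closed subgroup $G \leqslant \Homeo_0(\A)$ topologically isomorphic to $\R \times \Z$, torsion-free, and compactly generated by $R_{[0,1]} \cup \{T^{\pm 1}\}$, containing the required copy of $\R$.

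\textbf{Key invariants.} The two boundary rotation numbers $\rho_\pm \colon \Homeo_0(\A) \to \R$ are homogeneous quasi-morphisms satisfying $|\rho_\pm(f)| \leqslant C\, d_\mathcal{B}(f,\id)$ by Lemma \ref{militon lemma}. On any abelian subgroup they restrict to genuine homomorphisms. By construction, $\rho_-(R_t T^n) = \rho_+(R_t T^n) = t$, so the map $\rho = (\rho_-, \rho_+)\colon G \to \R^2$ has image the diagonal, i.e.\ rank one, even though $G$ has rank two as a compactly generated topological group; in particular the inclusion $G \hookrightarrow \Homeo_0(\A)$ is visibly not QI, since the $T$-direction is invisible to rotation numbers.

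\textbf{Ruling out alternative QI embeddings.} Suppose for contradiction that $\phi \colon G \to \Homeo_0(\A)$ is a continuous QI isomorphic embedding. Since $G$ is abelian, $\rho \circ \phi$ is a linear map $M \colon \R \times \Z \to \R^2$ with columns $(\alpha,\beta) = \rho(\phi(R_1))$ and $(a,b) = \rho(\phi(T))$, and Lemma \ref{militon lemma} gives the lower bound
\[
\|\phi(R_t T^n)\|_\mathcal{B} \gtrsim \max\bigl(|\alpha t + a n|,\ |\beta t + b n|\bigr).
\]
The QI hypothesis forces this lower bound to dominate $|t| + |n|$, which is possible only when $M$ is invertible. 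In particular the continuous flow $\phi(R_t)$ and the commuting homeomorphism $\phi(T)$ must carry linearly independent boundary rotation vectors.

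\textbf{Main obstacle: the rigidity step.} The heart of the proof is to rule out the non-degenerate case $\det M \neq 0$ by a rotation-set-theoretic rigidity argument for commuting annulus homeomorphisms, in the tradition of Franks, Handel, and Misiurewicz: continuity of the $\R$-subgroup $\phi(R_t)$ together with commutation with $\phi(T)$ constrains the possible pairs of boundary rotation vectors to a single proportional class, contradicting invertibility of $M$. This dynamical rigidity is the core obstacle; once it is established, together with Step 3, no continuous injective QI embedding exists, proving the proposition.
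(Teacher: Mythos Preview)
Your proposed group $G\cong \R\times\Z$ does not work: it \emph{does} admit a continuous quasi-isometric isomorphic embedding into $\Homeo_0(\A)$, so it cannot serve as the example. Concretely, let $\phi(R_t)(x,s)=(x+t,s)$ and let $\phi(T)(x,s)=(x+\tfrac{s+1}{2},s)$ be the linear Dehn twist. These commute, and for the lift to $\R\times[-1,1]$ one has $\rho_-(\phi(R_tT^n))=t$, $\rho_+(\phi(R_tT^n))=t+n$, so the diameter of the image of a fundamental domain is $\asymp |t|+|n|$; by Militon's Theorem \ref{militon thm} this embedding is a quasi-isometry. In particular, the ``rigidity step'' you defer to is simply false: there is no obstruction to a continuous $\R$-flow commuting with a homeomorphism carrying linearly independent boundary rotation vectors. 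Note also that your Step 3 has the inequalities pointed the wrong way: Lemma \ref{militon lemma} and the QI hypothesis together give $|t|+|n|\asymp\|\phi(R_tT^n)\|_{\mathcal B}\gtrsim\max(|\alpha t+an|,|\beta t+bn|)$, which is automatic and does not force $M$ to be invertible; the fragmentation norm on $\Homeo_0(\A)$ is \emph{not} controlled from above by boundary rotation numbers alone (e.g.\ two opposite Dehn twists along parallel core curves have zero boundary rotation but arbitrarily large norm).

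The paper's argument is genuinely different: it takes the \emph{non-abelian} group $G=\Z\ltimes\R$ with $zrz^{-1}=-r$. The point is that under any continuous QI embedding $\psi$, Theorem \ref{militon thm} forces the $\R$-flow to have nonzero linear displacement on some lift; then results of Misiurewicz--Ziemian and Franks produce an embedded essential circle $C$ that is a single $\psi(\R)$-orbit rotated in a definite direction. The relation $zrz^{-1}=-r$ now forces $\psi(z)(C)$ to be a disjoint parallel circle rotated by $\psi(\R)$ in the \emph{opposite} direction, and iterating gives nested circles with alternating rotation signs accumulating somewhere, contradicting continuity of the flow. The non-trivial conjugation relation is exactly what your abelian candidate lacks.
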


The obstruction to a continuous quasi-isometric, isomorphic embedding given in our proof comes from a combination of Theorem \ref{militon thm} and some basic results from the theory of rotation vectors for surface homeomorphisms, as developed by Franks \cite{Franks} and Misiurewicz and Ziemian \cite{MZ}.    

\begin{proof}
Let $G = \Z \ltimes \R$, where the action of the generator $z$ of $\Z$ on any $r \in \R$ is given by $z r z\inv = -r$.   (Here and in what follows, we think of $\Z$ as the cyclic group generated by $z$, and $\R$ as an additive group.)  $G$ is quasi-isometric to $\Z \times \R$, as the subgroup $2\Z \times \R \leqslant G$ is of index 2.  
We first produce a continuous, injective homomorphism from $G$ to $\Homeo_0(\A)$.  In fact, the same strategy can be used to produce a continuous injective homomorphism $G \to \Homeo_0(M)$, for any manifold $M$ of dimension at least 2.

Let $B \subseteq \A$ be an embedded ball, and let $g \in \Homeo_0(\A)$ be a homeomorphism such that the translates $g^k(B)$, for $k \in \Z$, are pairwise disjoint.  Let $\{f_t\}_{t\in \R}$ be a 1-parameter family of homeomorphisms supported on $B$.  For $r \in \R$ define 
$$
\phi(r)(x)=\begin{cases}
g^k f_r g^{-k}(x) &\text{ if } x \in g^k(B), k \text { even} \\
g^k f_{-r} g^{-k}(x) &\text{ if } x \in g^k(B), k \text { odd} \\
x &\text{ otherwise }.
\end{cases}
$$ 
and define $\phi(z) = g$.  Then $\phi(z)\phi(r)\phi(z)^{-1} = \phi(r)^{-1}$ and so $\phi$ extends to a homomorphism defined on $G$. By construction, this homomorphism is continuous and can easily be insured to be an isomorphic embedding.

Now we show that there is no continuous quasi-isometric, isomorphic embedding of $G$ into $\Homeo_0(\A)$.  
To see this, suppose for contradiction that $\psi$ was such an embedding.   We focus first on the $\R$ factor of $G$.  The first goal is to show that the algebraic constraint given by undistortedness of $\psi(\R)$ puts a dynamical constraint on the action of $\psi(\R)$.  
For concreteness, identify the universal cover $\tilde{\A}$ of $\A$ with $\R \times [0,1]$, and the covering map with the standard projection $\pi: \R \times [0,1] \to \R/\Z \times [0,1] \cong \A$.  
Let $d$ be the metric on $\A$ induced by the standard Euclidean metric $\tilde{d}$ on $\R \times [0,1]$, so $\tilde{d}$ is a geodesic lift of $d$. 

Continuity of $\psi$ together with the construction of lifts of paths of homeomorphisms described in Section \ref{extension sec} gives a lift of the action of $\R$ on $\A$, i.e. a homomorphism $\tilde{\psi}: \R \to \Homeo(\tilde{\A})$  such that $\pi \tilde{\psi} = \psi$.

Let $t$ be any nontrivial element of $\R \leqslant G$.  Since $\langle t \rangle \leqslant \R \leqslant G$ is 
quasi-isometrically embedded, the restriction of $\psi$ to $\langle t \rangle$ is a quasi-isometric, isomorphic embedding into $\Homeo_0(\A)$.   
To simplify notation, let $\tau = \tilde{\psi}(t)$, so $\tau^n = \tilde{\psi}(nt)$, and let $D = [0,1] \times [0,1]$ be a fundamental domain for $\A$.  
Theorem \ref{militon thm} implies that there exist constants $K', C'$ such that
\begin{equation}
n \leqslant K' \sup_{x,y \in D} \tilde{d}(\tau^n(x),   \tau^n(y)) + C'
\end{equation}
holds for all $n$.  In particular, for $K:=2K'$ and $C:=C'+K'\cdot {\rm diam}(D)$, we have
\begin{equation}
n \leqslant K \sup_{x \in D} \tilde{d}(\tau^n(x),  x) + C
\end{equation}
for all $n$.  Increasing $K$ and $C$ further if needed, we easily get the other inequality
\begin{equation} \label{mil eq}
\frac 1K \sup_{x \in D} \tilde{d}(\tau^n(x),  x) - C\leqslant n \leqslant K \sup_{x \in D} \tilde{d}(\tau^n(x),  x) + C.
\end{equation}

Using an argument from \cite{MZ}, we will now find a single point $x$ that achieves such a ``linear displacement"  under $\tau^n$ for all $n$.   Combined with a theorem of Franks, this will give the existence of a periodic orbit for $\psi(\R)$.  
As this argument uses tools separate from the rest of the proof, we state it as an independent lemma.  

\begin{lemma} \label{MZ lem}
Suppose $\tau \in \Homeo(\tilde{\A})$ is a lift of an annulus homeomorphism satisfying the inequalities in \eqref{mil eq} above.  Then  
there exists a point $x \in D$ such that  
$$\lim \limits_{n \to \infty} \frac{1}{n} \tilde{d}(\tau^n(x), x) = K$$
\end{lemma}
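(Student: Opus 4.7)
The plan is to combine the Misiurewicz--Ziemian theory of rotation sets with the two-sided linear bounds \eqref{mil eq} to produce a single orbit whose displacement rate converges to a definite value. Since the vertical coordinate in $\tilde{\A}=\R\times[0,1]$ is bounded, the Euclidean distance $\tilde d(\tau^n(x),x)$ differs from the horizontal displacement $|p_1(\tau^n(x))-p_1(x)|$ by at most $1$, so asymptotic information about $\tfrac{1}{n}\tilde d(\tau^n(x),x)$ reduces to information about the one-dimensional rotation quantity $\tfrac{1}{n}(p_1(\tau^n(x))-p_1(x))$.

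Next I would form the Misiurewicz--Ziemian rotation set
\[
R(\tau)=\Bigl\{\lim_{k\to\infty}\tfrac{1}{n_k}\bigl(p_1(\tau^{n_k}(x_k))-p_1(x_k)\bigr)\;\Big|\;n_k\to\infty,\;x_k\in D\Bigr\}.
\]
For a lift of an annulus homeomorphism, $R(\tau)$ is a compact interval $[\rho^-,\rho^+]\subseteq\R$. The right half of \eqref{mil eq} forces $\rho^\pm\in[-K,K]$ and the left half forces $\max(|\rho^-|,|\rho^+|)\geqslant 1/K$; a near-subadditivity argument shows that the sup-displacement rate $\kappa:=\lim_n\tfrac{1}{n}\sup_{x\in D}\tilde d(\tau^n(x),x)$ exists and agrees, up to bounded error, with $\max(|\rho^-|,|\rho^+|)$, so that $\kappa$ lies in $[1/K,K]$.

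The central step is the Misiurewicz--Ziemian result that each extreme value of $R(\tau)$ is attained as an honest limit: take a sequence of pairs $(x_k,n_k)$ whose finite-time horizontal averages approach $\rho^+$, push forward the normalized counting measures along the corresponding orbits to obtain a $\tau$-invariant probability measure $\mu$ whose displacement cocycle $\phi(x):=p_1(\tau x)-p_1(x)$ has $\mu$-mean $\rho^+$, select an ergodic component of $\mu$ with the same mean (available precisely because $\rho^+$ is extremal in the compact convex set of cocycle means), and apply the Birkhoff ergodic theorem to $\phi$ to produce $x\in D$ with $\lim_n\tfrac{1}{n}(p_1(\tau^n x)-p_1(x))=\rho^+$. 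The same argument handles $\rho^-$; whichever extremum realises $\kappa$ in absolute value then produces, via the first step, a point $x\in D$ with $\lim_n\tfrac{1}{n}\tilde d(\tau^n(x),x)=\kappa$. The right-hand side $K$ in the lemma is precisely this intrinsic linear-growth rate $\kappa$, which by the previous step is comparable to the hypothesis constant.

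The main obstacle is the upgrade from subsequential to honest limits in the third step: this is where the one-dimensional structure of $R(\tau)$ on the annulus and the existence of ergodic measures realising the extremal mean are essential, and it is exactly the input that would require non-trivial modification to carry over to higher-dimensional tori or to manifolds with more complicated rotation sets.
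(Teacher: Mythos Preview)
Your approach is essentially the same as the paper's: both arguments pick near-maximising sequences to land on an extremal point of the Misiurewicz--Ziemian rotation set, invoke \cite{MZ} to realise that extremal value as the mean displacement of a $\tau$-invariant probability measure, and then use Birkhoff to produce a single point whose averaged displacement converges. Your inclusion of the ergodic-component step is in fact a welcome clarification, since Birkhoff alone only gives that time averages exist $\mu$-a.e.\ and integrate to the space mean; extremality of $\rho^+$ is precisely what forces these time averages to all equal $\rho^+$, and the paper leaves this implicit. Your observation that the limit produced is the intrinsic rate $\kappa\in[1/K,K]$ rather than the hypothesis constant $K$ itself is also correct and matches what the paper actually proves: its argument yields a value $L$ with $0<L\leqslant K$, and the continuation of the proof of Proposition~\ref{no QI} uses this $L$ (setting $s=t/L$), not $K$.
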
 

\begin{proof}  
Pick a sequence of points $x_n \in D$ such that $\tilde d(\tau^n(x_n),x_n)\geqslant \sup_{x \in D} \tilde{d}(\tau^n(x),  x)-\frac 1n$. By Inequality \ref{mil eq}, we have 
$$
n/K-C/K-\frac 1n\leqslant \tilde d(\tau^n(x_n),x_n)
\leqslant Kn+KC.
$$
This implies that there exists $0< L \leq K$, and a subsequence $x_{n_k}$ such that  
$$
\lim \limits_{k \to \infty} \frac{\tau^{n_k}(x_{n_k}) - x_{n_k}}{n_k} = (L, 0)
$$
where $\tau^n(x_n) - x_n$ is considered as a vector in $\R \times [0,1] \subseteq \R^2$.  

Since, the estimate $
\limsup \limits_{n \to \infty} \frac{\tau^n(y_n) - y_n}{n} \leqslant (K, 0)
$
holds for any sequence of points $y_n$, we may take $x_{n_k}$ to be a sequence such that maximizes the value of the norm of $\lim \limits_{k \to \infty} \frac{\tau^{n_k}(y_{n}) - y_{n}}{n}$ over all sequences in $D$ such that this limit exists.   

In \cite[Theorem 2.4]{MZ}, it is shown that, under these hypotheses, there exists a probability measure $\mu$ on $D$, invariant under $\tau$, and such that $\int_D (\tau(x) -x) \, d\mu = (L,0)$.   (The result of \cite{MZ} is in fact more general -- applying not only to annulus homeomorphisms, but to continuous non-invertible maps, and to continuous maps of the torus.  In their language, $(L, 0)$ is an extremal point of the \emph{rotation set} for $\tau$.)  
It now follows from the Birkhoff ergodic theorem that, for $\mu$-almost every $x \in D$, 
$$\lim \limits_{n \to \infty} \frac{\tau^n(x) - x}{n} = \int_D (g(x) - x)\,  d\mu = (L,0)$$
which give the existence of a point as in the Lemma.  
Further details can be found in \cite[Section 1]{MZ}.  
\end{proof}

Continuing with the proof of the Theorem, let $s = t/L$, so $\tilde{\psi}(s)^{L} = \tilde{\psi}(t)$.  This makes sense even for non-integer $L$, as $s$ is in the (additive) $\R$ subgroup, so we may \emph{define} $\tilde{\psi}(s)^{L}$ to be $\tilde{\psi}(Ls)$.
From  Lemma \ref{MZ lem} we have a point $x$ such that 
$$\lim \limits_{n \to \infty} \frac{\tilde{\psi}(t)^n(x) - x}{n} = (L, 0)
$$
and this implies that
$$\lim \limits_{n \to \infty}  \frac{\tilde{\psi}(s)^n(x) - x}{n} = (1,0).
$$

Let $T: \R \times [0,1] \to \R \times [0,1]$ be the translation $T(a,b) = (a+1, b)$.  Since $T$ commutes with $s$, we have 
$$\lim \limits_{n \to \infty}  \frac{(T^{-1}\circ \tilde{\psi}(s))^n(x) - x}{n} = (0,0).
$$

By work of Franks \cite[Cor. 2.5]{Franks}, this implies that $T^{-1} \circ \tilde{\psi}(s)$ has a fixed point. (In the language of \cite{Franks},  $T^{-1}\circ \tilde{\psi}(s)$ is said to have a periodic point with rotation number 0; this simply means a fixed point.)   Equivalently, there exists a point $x_0 \in \R \times [0,1]$ such that $\tilde{\psi}(s)(x_0) = x_0 + (1, 0)$.   Since $\tilde{\psi}(\R)$ commutes with $T$, we have also $\tilde{\psi}(r+s)(x_0) = \tilde{\psi}(r)(x_0) + (1, 0)$ for all $r \in \R$.    It follows that the orbit of $\pi(x_0) \in \A$ under $\psi(\R)$ is an embedded circle, homotopic to a boundary component of the annulus.   Let $C$ denote this embedded circle.   

As before, let $z$ denote a generator of the $\Z$ factor of $G = \Z \ltimes \R$.  Since $zrz^{-1} = -r$, the embedded circle $\psi(z)(C)$ is the orbit of the point $\psi(z)(x_0)$ under $\psi(\R)$.   From the group relation and our construction of the lift $\tilde{\psi}$ by lifting paths, it follows that if $x \in \R \times [0,1]$ satisfies $\pi(x) \in \phi(z)(C)$, then we have
\begin{equation} \label{neg eq}
\tilde{\psi}(s)(x)= x + (-1, 0).
\end{equation} 
This implies in particular that $\psi(z)(x_0) \neq x_0$, and that the curves $\psi(z)(C)$ and $C$ are disjoint.  

As both $\psi(z)(C)$ and $C$ are homotopic to boundary circles, $\psi(z)(C)$ separates $C$ from one of the boundary components of $\A$.  For concreteness, assume this is the boundary component $\S^1 \times \{0\}$.  If $A_0$ is the closed annulus bounded by $\S^1 \times \{0\}$ and $C$, then $\psi(z)(A_0)$ is a sub-annulus contained in the interior of $A_0$ whose boundary is $\S^1 \times \{0\} \sqcup \psi(z)(C)$.   It follows that the set $\{\psi(z)^n(A_0) \mid n \geq 0\}$ is a family of nested annuli, and the boundary components $\psi(z)^n(C)$ of each are pairwise disjoint curves, with $\psi(z^n)(C)$ separating $\psi(z^{n-1})(C)$ from $\psi(z^{n+1})(C)$.  

Since $\A$ is compact, there exists an accumulation point $y$ of the curves $\psi(z^n)(C)$.  Since $\psi(z^n)(C)$ separates $\psi(z^{n-1})(C)$ from $\psi(z^{n+1})(C)$, $y$ is an accumulation point both of the set of curves of the form $\psi(z^{2m})(C)$ and of the set of curves of the form $\psi(z^{2m+1})(C)$.     We now use this to derive a contradiction -- informally speaking, the relation $zrz^{-1} = -r$ will force the flow given by $\psi(\R)$ to move points in opposite directions on the curves $\psi(z^{2m})(C)$ and $\psi(z^{2m+1})(C)$, and so it cannot be continuous at $y$.  

To make this more formal, let $\tilde{y}$ be a lift of $y$ to $\tilde{\A} = \R \times [0,1]$.  Then there exists a sequence $a_n$ in $\tilde{\A}$ approaching $\tilde{y}$ and such that $\pi(a_n) \in \psi(z^n)(C)$.  
Since $a_n \in \psi(z^n)(C)$, using the group relation as in equation \eqref{neg eq} above, we have 
$$\tilde{\psi}(s)(a_n) = 
\left\{ \begin{array}{ll} a_n + (1, 0) & \text{ if } n = 2m+1 \\
a_n + (-1, 0) & \text{ if } n = 2m
\end{array} \right.$$
This contradicts continuity of the homeomorphism $\tilde{\psi}(s)$ at $y$, concluding the proof.  
\end{proof}

Although flows played a central role in the proof of Propositions \ref{no QI}, we expect that it should be possible to give discrete examples.  
\begin{quest}
Can one modify the construction above to give an example of a torsion-free, finitely generated subgroup of $\Homeo_0(\A)$ that does not quasi-isometrically isomorphically embed into $\Homeo_0(\A)$?   
\end{quest}

A good candidate for this question might be the subgroup $\Z \ltimes \Z \leqslant \Z \ltimes \R$, or the infinite dihedral group $\Z/2\Z \ltimes \Z$.  

It would also be interesting to see analogous constructions on other manifolds.  The tools we used from \cite{MZ} and \cite{Franks} generalize directly to homeomorphisms of the torus, and less directly to other surfaces, making the 2-dimensional case quite approachable.



\begin{thebibliography}{99}
\bibitem{kalton}F. Albiac and N. Kalton, {\em Topics in Banach space theory}, Springer Verlag, NY, 2006.

\bibitem{arens}R. Arens, {\em Topologies for homeomorphism groups}, 
Amer. J. Math. 68, (1946), 593--610.

\bibitem{Bavard} C. Bavard, 
{\em Longueur stable des commutateurs},
Enseign. Math.(2), 37.1-2 (1991) 109--150.

\bibitem{Bing} R. H. Bing, 
{\em Partitionable sets}, 
Bull. Amer. Math. Soc. 55 no. 12 (1949), 1101--1110.

\bibitem{BII} D. Burago, Y. Burago and  S. Ivanov,
{\em A Course in Metric Geometry}
Graduate studies in mathematics 33.  
American Mathematical Soc., 2001.  

\bibitem{BIP} D. Burago, S. Ivanov and L. Polterovich,
\textit{Conjugation-invariant norms on groups of geometric origin},
Adv. Studies in Pure Math. 52, Groups of Diffeomorphisms (2008), 221--250.


\bibitem{CF}D. Calegari and M. H.  Freedman, {\em
Distortion in transformation groups},
With an appendix by Yves de Cornulier. 
Geom. Topol. 10 (2006), 267--293.

\bibitem{EK}R. D. Edwards and R. C. Kirby, {\em Deformations of spaces of imbeddings}, 
Ann. Math. (2) 93 (1971), 63--88. 

\bibitem{EHN} D. Eisenbud, U. Hirsch, W. Neumann. 
\textit{Transverse foliations of Seifert bundles and self homeomorphisms of the circle}. 
Comment. Math. Helv. 56, 638--660 (1981).

\bibitem{fabian}M. Fabian, P. Habala, P. H\'ajek, V. Montesinos and V.  Zizler.
\textit{Banach space theory. The basis for linear and nonlinear analysis},  
CMS Books in Mathematics/Ouvrages de Math\'ematiques de la SMC. Springer, New York, 2011.

\bibitem{Fisher} D. Fisher.
\textit{Groups acting on manifolds: around the Zimmer program}.
In \textit{Geometry, Rigidity, and Group Actions}, Chicago Lectures in Math. 57 (2011).

\bibitem{FS}  D. Fisher, L. Silberman,
\textit{Groups not acting on manifolds},
Int. Math. Res. Not. IMRN (2008) 11, Art. ID rnn060.

\bibitem{Franks} J. Franks,
\textit{Recurrence and fixed points of surface homeomorphisms}, 
Ergod. Theory Dynam. Systems 8 (1988) 99--107.

\bibitem{FH} J. Franks, M. Handel, 
{\em Distortion elements in group actions on surfaces}, 
Duke Math. J. 131 no. 3 (2006). 441--468.

\bibitem{Gersten} A. M. Gersten,
\textit{Bounded cocycles and combings of groups}, 
Internat. J. Algebra Comput. 2 (1992), 307--326.

\bibitem{Ghys} E. Ghys, 
\textit{Groups acting on the circle},
Enseign. Math. (2) 47 no. 3-4, (2001), 329--407

\bibitem{Gromov} M. Gromov,
\textit{Volume and bounded cohomology},
Pub. Math. de l'IH\'ES 56 (1982), 5--99.

\bibitem{Hurtado} S. Hurtado,
{\em Continuity of discrete homomorphisms of diffeomorphism groups},
Geometry \& Topology 19 (2015) 2117--2154.

\bibitem{KL}
B. Kleiner, B. Leeb,
\textit{Groups quasi-isometric to symmetric spaces},
Comm. Anal. Geom. 9 (2001), 239--260.

\bibitem{Militon} E. Militon.  
\textit{Distortion elements for surface homeomorphisms}, 
Geometry and Topology 18 (2014) 521--614.

\bibitem{Militon2} E. Militon,
{\em \'El\'ements de distorsion de $\Diff_{0}^{\infty}(M)$}, 
Bulletin de la SMF 141.1 (2013), 35-46.

\bibitem{MZ} M. Misiurewicz, K. Ziemian.
\textit{Rotation sets for maps of tori},
J. London Math. Soc. (2) 40, (1989)  490--506. 

\bibitem{Polterovich} L. Polterovich, 
{\em Growth of maps, distortion in groups and symplectic geometry}, 
Invent. Math. 150 (2002) 655--686.

\bibitem{roe}J. Roe, 
{\em  Lectures on coarse geometry},  
University Lecture Series, 31. American Mathematical Society, Providence, RI, 2003.

\bibitem{OB}C. Rosendal, {\em A topological version of the Bergman property}, 
Forum Math. 21 (2009), no. 2, 299--332.

\bibitem{large scale}C. Rosendal, {\em Coarse geometry of topological groups}, preprint.

\bibitem{uspenskii}V. V. Uspenski\u\i,  {\em A universal topological group with a countable basis}. (Russian) Funktsional. Anal. i Prilozhen. 20 (1986), no. 2, 86--87.


\end{thebibliography}
\end{document}